\documentclass[reqno,10pt,centertags]{amsart} 
\usepackage{amsmath,amsthm,amscd,amssymb,latexsym,esint,upref,stmaryrd,enumerate,xcolor,verbatim,yfonts,multicol}
\calclayout
\usepackage{color,float}
\usepackage{cancel}
\usepackage{hyperref}
\usepackage{cleveref}

\usepackage{tikz}
\usetikzlibrary{calc,decorations.markings}
\usepackage{pgfplots}
\pgfplotsset{compat=1.15}

\usepackage{graphicx}
\usepackage{enumitem}

\newcommand*{\mailto}[1]{\href{mailto:#1}{\nolinkurl{#1}}}
\newcommand{\arxiv}[1]{\href{http://arxiv.org/abs/#1}{arXiv:#1}}
\newcommand{\doi}[1]{\href{http://dx.doi.org/#1}{DOI:#1}}

\newcommand{\R}{{\mathbb R}}
\newcommand{\N}{{\mathbb N}}
\newcommand{\Z}{{\mathbb Z}}
\newcommand{\C}{{\mathbb C}}

\newcommand{\bbC}{{\mathbb{C}}}

\newcommand{\bbN}{{\mathbb{N}}}

\newcommand{\bbR}{{\mathbb{R}}}

\newcommand{\con}{\mathfrak{p}}
\newcommand{\cona}{\mathfrak{p}_a}
\newcommand{\conb}{\mathfrak{p}_b}
\newcommand{\coninf}{\mathfrak{p}_\infty}

\newcommand{\beq}{\begin{align}}
\newcommand{\enq}{\end{align}}

\renewcommand{\a}{\alpha}
\renewcommand{\b}{\beta}

\newcommand{\e}{\varepsilon}

\newcommand{\z}{\zeta}

\DeclareMathOperator{\dist}{dist}

\DeclareMathOperator{\dom}{dom}

\DeclareMathOperator{\Ai}{Ai}
\DeclareMathOperator{\Bi}{Bi}

\newcommand{\Arg}{\text{\rm Arg}}

\renewcommand{\Re}{\text{\rm Re}}

\renewcommand{\ln}{\text{\rm ln}}

\newcommand{\no}{\notag}
\newcommand{\lb}{\label}
\newcommand{\f}{\frac}

\newcommand{\wti}{\widetilde}

\newcommand{\oh}{o}
 
\newcommand{\dott}{\,\cdot\,}

\newcommand{\bi}{\bibitem}

\let\geq\geqslant
\let\leq\leqslant

\newcommand{\al}{\a}

\newcommand{\eps}{\varepsilon}

\newcommand{\Lr}{{L^2((a,b);rdx)}} 

\newcommand{\ACl}{{AC_{loc}((a,b))}}
\newcommand{\Ll}{{L^1_{loc}((a,b);dx)}}

\makeatletter
\def\theequation{\@arabic\c@equation}
\allowdisplaybreaks 
\numberwithin{equation}{section}

\newtheorem{theorem}{Theorem}[section]
\newtheorem{proposition}[theorem]{Proposition}
\newtheorem{lemma}[theorem]{Lemma}
\newtheorem{corollary}[theorem]{Corollary}
\newtheorem{definition}[theorem]{Definition}
\newtheorem{hypothesis}[theorem]{Hypothesis}

\newtheorem{problem}[theorem]{Open Problem} 

\theoremstyle{remark}
\newenvironment{remark}[1][]{\refstepcounter{theorem}\par\medskip\noindent\textit{Remark~$\theexample. #1$} \rmfamily}{{\ }\hfill $\diamond$ \vspace{6pt}}%Define remark environment with diamond at the end

\begin{document}

\title[Sturm--Liouville operators with Schatten $p$-class resolvents]{Complex analytic theory of Sturm--Liouville operators with Schatten $p$-class resolvents} 

\author[G.\ Fucci]{Guglielmo Fucci}
\address{Department of Mathematics, 
East Carolina University, 331 Austin Building, East Fifth St.,
Greenville, NC 27858-4353, USA}
\email{\mailto{fuccig@ecu.edu}}
%\email{fuccig@ecu.edu}
\urladdr{\url{http://myweb.ecu.edu/fuccig/}}
%\urladdr{http://myweb.ecu.edu/fuccig/}

\author[M. Piorkowski]{Mateusz Piorkowski}
\address{Department of Mathematics, KTH Royal Institute of Technology, Stockholm, Sweden}
\email{\mailto{mateuszp@kth.se}}
\urladdr{\url{https://sites.google.com/view/mateuszpiorkowski/home?pli=1}}

\author[J.\ Stanfill]{Jonathan Stanfill}
\address{Division of Geodetic Science, School of Earth Sciences, The Ohio State University \\
275 Mendenhall Laboratory, 125 South Oval Mall, Columbus, OH 43210, USA}
\email{\mailto{stanfill.13@osu.edu}}
%\email{stanfill.13@osu.edu}
\urladdr{\url{https://u.osu.edu/stanfill-13/}}
%\urladdr{https://u.osu.edu/stanfill-13/}

%\dedicatory{}

\date{\today}
%\thanks{} 
%\thanks{Appeared in {\it .}
\@namedef{subjclassname@2020}{\textup{2020} Mathematics Subject Classification}
\subjclass[2020]{Primary: 34B24, 34E05, 47A10, 47B10. Secondary: 34B27, 34L40.}
\keywords{Sturm--Liouville operators, Liouville--Green (WKB) asymptotics, spectral $\zeta$-functions.}

%%%%%%%%%%%%%%%%%%%%%%%%%%%%%%%

\begin{abstract}
    We use the theory of entire functions of finite order to prove a universal spectral dependence of the blowup/decay rate of solutions of the Sturm--Liouville eigenvalue equation for problems with Schatten $p$-class resolvents. The general form of the asymptotics turns out to depend exclusively on the largest \emph{integer} $\con$ such that the underlying resolvents fail to be in the Schatten $\con$-class.
    
    We then use the above result to construct a characteristic function of minimal order for Sturm--Liouville problems with Schatten $p$-class resolvents. This immediately yields contour integral representations of spectral $\z$-functions that were previously only known for quasi-regular problems (except for a few examples). We also demonstrate how our methods lead to new results in connection to  important classic topics of Liouville--Green (or WKB) asymptotics and the approximation of the spectrum of singular problems via underlying truncated regular problems. All our applications are accompanied by illustrative examples, including the Airy differential equation, harmonic oscillator (and general power potentials), and the Laguerre differential equation.  
\end{abstract}
%%%%%%%%%%%%%%%%%%%%%%%%%%%%%%%

\maketitle

%%%%%%%%%%%%%%%%%%%%%%%%%%%%%%%
%%%%%%%%%%%%%%%%%%%%%%%%%%%%%%%
\section{Introduction}
%%%%%%%%%%%%%%%%%%%%%%%%%%%%%%%
%%%%%%%%%%%%%%%%%%%%%%%%%%%%%%%

The main motivation for the current work is spectral $\zeta$-functions, in particular, the process of analytically continuing their sum definition to the rest of the complex plane. The properties revealed in this process, such as the location and order of poles as well as trivial zeros and special values, are very important in many areas of mathematics and physics \cite{Em12,Em94,Ki02}. The starting point for this analysis is the contour integral representation for the spectral $\zeta$-function recently generalized in \cite{FPS25a}. A key part of such formulas is the \emph{characteristic function}, which is an entire function vanishing exactly at the eigenvalues (see Def.~\ref{defchar}). While not unique, in the context of integral representations it is essential that the characteristic function is of finite (\emph{ideally minimal}) order as an entire function (see \cite[Lect.~1]{Levin1996}). This is only possible if the eigenvalues do not grow \emph{too slowly}; that is, they must satisfy at least $\lambda_n \gtrsim n^{1/\kappa}$ for some $\kappa > 0$. Equivalently, resolvents of self-adjoint realizations need to be in some Schatten $p$-class, which we will assume throughout.

One of our main results is the construction of characteristic functions of minimal order, see Theorem \ref{Thrm3.4} below. The key technical ingredient in our analysis is the establishment of precise formulas for the asymptotics of principal and nonprincipal solutions at the singular endpoints found in Theorem \ref{TheoremUV}. These formulas are \emph{universal}, in the sense that their general form depends only on the largest \emph{integer} $\con$ such that the underlying resolvents fail to be in the Schatten $\con$-class. No additional smoothness of the coefficients of the Sturm--Liouville expression, except for the standard $L^1_{loc}$-assumptions (see Hypothesis \ref{h1}), is required for this result to hold. In Section \ref{Sect:Applications} we demonstrate how our asymptotic formulas for the (non)principal solutions have immediate consequences to other topics of interest: Liouville--Green (or WKB) asymptotics and related integral formulas (see Sect.~\ref{Sect:LG}), and the approximation of the spectrum of singular problems in terms of regular problems on a truncated interval (see Sect.~\ref{Sect:EigenConv}).

Our analysis relies on (mostly elementary) results from the theory of entire function, in particular the Hadamard factorization for entire functions of finite order. In fact, the aforementioned asymptotics of (non)principal solutions inherit their form directly from the exponential in Weierstrass' elementary factors $E(z, p) = (1-z)\exp(\sum_{\ell=1}^p \frac{z^\ell}{\ell})$. For a general reference to the theory of entire functions see \cite{Con73,Levin1996,Nev07}.

Below we start with a brief overview of spectral $\zeta$-functions and their integral representations. In particular, we focus on the role of the characteristic function and illustrate the importance of it being of minimal order. We then summarize our main results in Section \ref{Sect:Results} (the proofs are found in later sections).

\subsection{Motivation via spectral \texorpdfstring{$\zeta$}{zeta}-functions}
The motivation for this work arises from the study of spectral $\zeta$-functions. In particular the question of how one can construct an integral representation for the spectral $\zeta$-function associated with self-adjoint extensions of singular Sturm--Liouville operators defined in the \emph{largest possible} region of the complex plane. Formally, the spectral $\zeta$-function for some self-adjoint operator $T$ with a purely discrete spectrum $\sigma(T)$ is given by
\begin{align}
    \zeta(s;T) = \sum_{\underset{\lambda_n\neq 0}{n\in\bbN}} \lambda_n^{-s}, \quad \sigma(T) = \lbrace \lambda_n \rbrace_{n=1}^\infty,
\end{align}
where convergence is only guaranteed for $\Re(s)$ large enough. In our recent paper \cite{FPS25a}, we construct integral representations of $\zeta$-functions associated with general sequences of complex numbers $\{\mu_{n}\}_{n\in \bbN}$ satisfying (see \cite[Def. 2.1]{FPS25a}):\\[1mm]
$(i)$ $0<|\mu_1|\leq|\mu_2|\leq \dots \to\infty$,\\[1mm]
    $(ii)$ the exponent of convergence $\kappa=\inf\big\{\rho > 0 \, \colon \sum_{n\in\bbN} |\mu_{n}|^{-\rho} < \infty \big\}$ is finite,\\[1mm]
    $(iii)$  there exists $\varepsilon>0$ and $\Psi\in[-\pi,\pi)$ such that all $\mu_n\in \mathcal{I}_\varepsilon=\left\{z\in\C : \Arg(z) \not \in (\Psi-\varepsilon, \Psi+\varepsilon)\right\}$.\\[1mm]
(The exponent of convergence can be computed via the formula $\kappa = \limsup_{n \to \infty} (\ln \, n)/(\ln \, |\mu_n|)$.)

Sturm--Liouville expressions that have a \emph{finite regularization index} at both endpoints \cite{PS24} (this includes the regular and quasi-regular cases) have self-adjoint extensions which are bounded from below, possess discrete spectrum, and satisfy Weyl asymptotics (i.e.,~$\lambda_n \propto n^2$). In each of these cases conditions $(i)$ and $(iii)$ are trivially staisfied (for any choice of $\Psi\in(-\pi,0)\cup(0,\pi)$), and condition $(ii)$ holds with exponent of convergence $\kappa=1/2$. Therefore one can conclude that in these cases the spectral $\z$-function is well-defined at least for $\Re(s) > 1/2$.  Indeed, the analysis of the spectral $\z$-function and its analytic continuation for regular and quasi-regular problems has been thoroughly investigated in \cite{FGKS21} and \cite{FPS25}, respectively.     

In the case of general singular Sturm–Liouville operators, the situation is more subtle because the spectrum could have, for instance, a continuous part. In addition, 
even when the spectrum is discrete, there are some cases in which $\kappa$ may not be finite due to the spectrum growing too slowly. For instance, it was shown in \cite{GP78} that the eigenvalues for the Schr\"odinger problem endowed with a logarithmic (plus Bessel) potential on the half-line has eigenvalues growing like $\ln\, n$ as $n\to\infty$ (while the solutions to the problem do not have finite growth order in the spectral parameter).
To ensure that the spectral $\zeta$-function is well-defined in the singular case, we must then introduce some additional assumptions:
\begin{hypothesis}\label{h3a}
Hypothesis \ref{h2} holds and the self-adjoint extensions $T_A$ in Theorem \ref{extensions} have only discrete spectrum $\sigma(T_{A})=\{\lambda_{A,n}\}_{n\in \bbN}$ with a finite exponent of convergence 
\begin{align}
    \kappa = \inf\Big\{\rho > 0 \, \colon \sum_{\underset{\lambda_{A,n}\neq 0}{n\in\bbN}} |\lambda_{A,n}|^{-\rho} < \infty \Big\},
\end{align}
where the eigenvalues are counted according to their multiplicity.
\end{hypothesis}
Note that this hypothesis is equivalent to the resolvents being in the Schatten $p$-class with $p > \kappa$.
Clearly it guarantees that $\sigma(T_{A})$ satisfies conditions $(i)$, $(ii)$, and $(iii)$ above (once again, for any choice of $\Psi\in(-\pi,0)\cup(0,\pi)$) and, therefore, the \textit{spectral $\zeta$-function} associated with the self-adjoint extension $T_{A}$ given by the infinite sum
\begin{align}\lb{2.65}
\zeta(s;T_{A}):=\sum_{\underset{\lambda_n\neq 0}{n\in\bbN}} \lambda_{A,n}^{-s},\quad \Re(s)>\kappa,
\end{align}
converges in its domain of definition. In case of negative eigenvalues $\lambda_{A,n}$, we make the convention that $\lambda_{A,n}^s = e^{-\pi i s}(-\lambda_{A,n})^s$, in agreement with \cite{FGKS21,FPS25,KM03,KM04}.

In order to study the properties of the analytic continuation of $\zeta(s; T_{A})$ (e.g.~its pole structure or branch points), it is convenient to obtain an integral representation valid for $\Re(s)>\kappa$ which can then be analytically continued to a larger region of the complex plane (see e.g. \cite{Ki02}). 
To construct a suitable integral representation we introduce the notion of characteristic function: 
\begin{definition}\label{defchar}
We call any entire function $F_A(z)$ with finite order, $\rho\geq\kappa$, and zeros exactly at $\sigma(T_{A})=\{\lambda_{A,n}\}_{n\in \bbN}$, counting multiplicities, a \textup{characteristic function} associated with $\sigma(T_{A})$.
\end{definition}
Note that existence of such characteristic functions is guaranteed by the Hadamard factorization theorem (see the proof of Thm.~\ref{Thrm3.4} below).

We recall here that an entire function $G$ is of finite order $\rho \geq 0$ if and only if
\begin{align}
    \exp(r^{\rho-\varepsilon}) \leq \max_{|z| = r} |G(z)| \leq \exp(r^{\rho+\varepsilon})
\end{align}
holds for any $\varepsilon > 0$ and $r \geq r_\varepsilon$. Assume now that $G(z)$ has infinitely many zeros $\lbrace \mu_n \rbrace_{n \in \bbN}$, and denote by $\kappa$ their exponent of convergence. Then we always have the inequality $\rho \geq \kappa$ (\cite[Ch.~3.2, Thm.~2]{Levin1996}). We say that $G(z)$ is of \emph{minimal order} if $\rho = \kappa$. By the previous assertion, this means that $G(z)$ is an  entire function with zero set $\lbrace \mu_n \rbrace_{n \in \bbN}$ and the \emph{slowest possible} growth at infinity. A closely related concept is the \emph{rank} $\con$ of a sequence of complex numbers defined via \cite[P. 284]{Con73}
\begin{align}
    \con = \min\Big\lbrace p \in \mathbb{N}_0=\bbN\cup\{0\} \ \colon \ \sum_{\underset{\mu_n\neq 0}{n\in\bbN}} |\mu_n|^{-(p+1)} < \infty \Big\rbrace.
\end{align}
Note that $\con$ is, by assumption, an integer. In all of the examples that we consider we will have $\con = \lfloor \kappa \rfloor$. While, in cases where $\kappa \not \in \mathbb N$, this equality will hold automatically, see \cite[Ch.~5.1, Thm.~1]{Levin1996}, for integer $\kappa$ we can in general only conclude that $\con \in \lbrace \kappa - 1, \kappa\rbrace$. Illustrative examples for $\con = \kappa \in \mathbb N$ are sequences $\mu_n \propto n^{\frac{1}{\kappa}}$, while for $\con = \kappa - 1 \in \mathbb N_0$ we could chose $\mu_n \propto n^{\frac{1}{\kappa}} \ln^{\frac{2}{\kappa}}(n)$. See Remark \ref{RemarkPhiKappa} for some consequences related to this case distinction.

According to \cite[Thm.~2.6]{FPS25a}, once a characteristic function $F_A(z)$ of order $\rho$ associated with $\sigma(T_{A})$ is available, one obtains the following integral representation for the spectral $\zeta$-function of $T_{A}$ valid for $\Re (s) > \rho$:
\begin{theorem}\label{Thrm3.4}
    Let Hypothesis \ref{h3a} be satisfied. If $F_{A}(z)$ is a characteristic function of order $\rho$ associated with the spectrum $\sigma(T_A)$, then for $\Re(s)>\rho\geq\kappa$,
  \begin{align}\label{3.2}
\zeta(s;T_A)&=\frac{1}{2\pi i}\int_\gamma dz \, z^{-s} \Big[\frac{d}{dz} \ln \, F_A(z)-\frac{m_0}{z}\Big]\\
&=e^{is(\pi-\Psi)}\frac{\sin(\pi s)}{\pi}\int_{R}^{\infty}dt\,t^{-s}\Big[\frac{d}{dt}\ln\,F_A\left(te^{i\Psi}\right)-\frac{m_0}{te^{i\Psi}}\Big]-\frac{1}{2\pi i}\int_{C_R}dz\,z^{-s}\Big[\frac{d}{dz}\ln\,F_A\left(z\right)-\frac{m_0}{z}\Big],\notag
\end{align}
where $m_0$ represents the multiplicity of the zero eigenvalue, $\rho$ is the order of $F_A(z)$, $C_R$ is a clockwise circle with radius $0<R<\min\{|\lambda_{A,n}|: \lambda_{A,n}\neq0\}$  parametrized via $z=R e^{i\theta}$ from $\theta=\Psi$ to $\theta=\Psi-2\pi$, $\Psi\in (\pi/2,\pi),$ and $\gamma$ is a counterclockwise contour which encloses the spectrum $\sigma(T_{A})$, dipping below, hence avoiding, the origin $($see \cite[Fig. 1]{FPS25}$)$. 
\end{theorem}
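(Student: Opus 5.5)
The plan is to derive the first equality from the argument principle and the second by deforming $\gamma$ onto the ray $\Arg(z)=\Psi$.

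\emph{The logarithmic derivative.} By Hadamard's theorem, $F_A(z)=z^{m_0}e^{P(z)}\prod_{\lambda_{A,n}\neq0}E(z/\lambda_{A,n},\con)$ with $\deg P\le\rho$. Hence
\[
\frac{d}{dz}\ln F_A(z)-\frac{m_0}{z}=P'(z)+\sum_{\lambda_{A,n}\neq0}\Big[\frac{1}{z-\lambda_{A,n}}+\sum_{\ell=1}^{\con}\frac{z^{\ell-1}}{\lambda_{A,n}^{\ell}}\Big],
\]
and this function is analytic near $z=0$. Its only singularities are simple poles at the nonzero eigenvalues, with residue equal to the multiplicity.

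\emph{First equality.} Fix $\Re(s)>\rho$ and the branch of $z^{-s}$ with cut along $\Arg(z)=\Psi$. Let $\gamma_N$ be a closed contour enclosing $\lambda_{A,1},\dots,\lambda_{A,N}$, avoiding the cut and the origin, closed by a large arc $|z|=r_N$. The residue theorem gives $\sum_{n\le N}\lambda_{A,n}^{-s}$. The sign convention for negative eigenvalues, $\lambda^s=e^{-\pi i s}(-\lambda)^s$, matches the branch of $z^{-s}$ below the origin, and this is why $\gamma$ dips below $0$.

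It remains to show that the arc contribution vanishes as $N\to\infty$. The standard tool is the bound for entire functions of order $\rho$: there are radii $r_N\to\infty$, avoiding exceptional annuli of finite logarithmic measure, on which $|F_A'/F_A(z)|\le r_N^{\rho-1+\varepsilon}$. The arc integral is then $O(r_N^{\rho+\varepsilon-\Re s})\to0$ once $\varepsilon<\Re(s)-\rho$. This yields the first line of \eqref{3.2}. I expect this step to be the main obstacle, and the argument is carried out in \cite[Thm.~2.6]{FPS25a}. I would invoke that result directly rather than reprove it, after noting that Hypothesis \ref{h3a} guarantees conditions $(i)$–$(iii)$ with $\Psi\in(\pi/2,\pi)$.

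\emph{Second equality.} Since the spectrum is real, the integrand is analytic off $\sigma(T_A)\cup\{\text{cut}\}$. I deform $\gamma$ into the following pieces:
\begin{itemize}
\item the ray $z=te^{i\Psi}$, $t\in[R,\infty)$, on both sides of the cut;
\item the small circle $C_R$.
\end{itemize}
The arcs at infinity vanish by the same growth estimate. The bracket $F_A'/F_A-m_0/z$ is single-valued and regular on $C_R$, so only $z^{-s}$ jumps across the cut. Its boundary values on the two sides differ by the factor $e^{2\pi i s}$ relative to each other. Combining the two sides gives the prefactor $\frac{1}{2\pi i}e^{is(\pi-\Psi)}\cdot\big(e^{i\pi s}-e^{-i\pi s}\big)$, which equals $e^{is(\pi-\Psi)}\sin(\pi s)/\pi$ once $dz=e^{i\Psi}dt$ and $z^{-s}=t^{-s}e^{-is\Psi}$ (with the appropriate $2\pi$ shift) are inserted. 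The remaining small-circle contribution, taken clockwise as parametrized, gives the last term of \eqref{3.2}. Convergence of the $t$-integral at infinity again uses $\Re(s)>\rho$.
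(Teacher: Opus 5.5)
Your proposal is correct in substance, but it reaches the first equality by a different route than the paper.

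The paper never estimates $F_A'/F_A$ for a general characteristic function. Instead it proceeds in three steps:
\begin{enumerate}
\item It quotes \cite[Thm.~2.4]{FPS25a} for the canonical Hadamard product $H_A$ (genus $\con$, no exponential factor), which already gives the representation for $\Re(s)>\kappa$.
\item It writes $F_A=e^{\omega}H_A$ with $\omega$ a polynomial and $\rho=\max\{\deg\omega,\kappa\}$. The extra term $\frac{1}{2\pi i}\int_\gamma z^{-s}\omega'(z)\,dz$ then vanishes by Cauchy's theorem once $\Re(s)>\deg\omega$, because $z^{-s}\omega'(z)=O(|z|^{\deg\omega-1-\Re(s)})$.
\item The second line is taken from \cite[Thm.~2.6]{FPS25a}.
\end{enumerate}

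You instead treat $F_A$ in one piece, using a Gundersen-type bound $|F_A'/F_A|\le r^{\rho-1+\varepsilon}$ on circles outside an exceptional set of finite logarithmic measure, and you carry out the keyhole deformation by hand. That makes your argument more self-contained. The cost is that it rests on a nontrivial exceptional-set lemma. It also hides what the paper's reduction makes explicit: the loss from $\kappa$ to $\rho$ comes solely from the polynomial $\omega$, which is exactly why characteristic functions of minimal order are the natural object. Since you already wrote $F_A'/F_A-m_0/z=P'+(\text{canonical sum})$, you could split there as the paper does, dispose of $P'$ trivially, and estimate only the canonical sum.

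One caution about your final sentence. In the slit-annulus bookkeeping, the clockwise circle $C_R$ is part of the positively oriented boundary. So you actually obtain $\zeta(s)=(\text{ray term})+\frac{1}{2\pi i}\int_{C_R}dz\,z^{-s}[\ldots]$, with a plus sign. Also, since $dz=e^{i\Psi}dt$, the subtraction in the ray integrand becomes $m_0/t$ (reading $\frac{d}{dt}$ as the derivative in $t$), not $m_0/(te^{i\Psi})$.

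A careful computation therefore does not literally reproduce the displayed sign of the $C_R$-term or the displayed $m_0$-term. These look like slips in the statement itself; the paper never meets them because it cites \cite{FPS25a} for this step. You should record what your deformation actually yields rather than asserting that it ``gives the last term''.
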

\begin{proof}
Since $\sigma(T_A)$ satisfies Hypothesis \ref{h3a} and has a finite exponent of convergence $\kappa\leq\rho$, we use \cite[Thm. 2.4]{FPS25a} to write $\zeta(s;T_A)=\frac{1}{2\pi i}\int_\gamma dz \, z^{-s} [\frac{d}{dz} \ln \, H_A(z)-\tfrac{m_0}{z}]$ for $\Re(s)>\kappa$,
where $H_{A}(z)$ denotes the characteristic function given by the Hadamard form \cite[Sect.~13.9 and 13.11]{Nev07}
\begin{equation}\label{c1}
H_A(z)=z^{m_0}\prod_{\underset{\lambda_n\neq 0}{n\in\bbN}}^\infty E\left(\frac{z}{\lambda_{A,n}},\con\right),\ \text{ with }\ 
  E\left(\frac{z}{\lambda_{A,n}},\con\right)=\left(1-\frac{z}{\lambda_{A,n}}\right)\exp\left[\sum_{j=1}^{\con}\frac{1}{j}\left(\frac{z}{\lambda_{A,n}}\right)^{j}\right].
\end{equation}
Any other entire function $F_{A}(z)$ of finite order $\rho$ and with the same zeroes as $H_{A}(z)$ in \eqref{c1} can be written as \cite[Sect.~13.9]{Nev07} $F_A(z)=e^{\omega(z)} H_A(z)$
for some polynomial $\omega(z)$, in which case $\rho=\max\{\deg(\omega), \kappa\}$.
Hence one can then write for $\Re(s)>\rho$ (since $z^{-s}\tfrac{d}{dz}\omega(z)$ is holomorphic in the interior of $\gamma$)
\begin{align}
\begin{split}
    \frac{1}{2\pi i} \int_\gamma dz  \, z^{-s} \Big[\frac{d}{dz} \ln \, F_A(z)-\frac{m_0}{z}\Big] &= \frac{1}{2\pi i} \int_\gamma dz  \, z^{-s} \Big[\frac{d}{dz} \ln \, H_A(z)-\frac{m_0}{z}\Big] + \frac{1}{2\pi i} \int_\gamma dz  \, z^{-s} \frac{d}{dz} \omega(z)
    \\
    &= \zeta(s;T_A).
\end{split}
\end{align}

The second equality in \eqref{3.2} is proven by applying \cite[Thm. 2.6]{FPS25a} in the region $\Re(s)>\rho$.
\end{proof}
It is clear from the above proof that if $F_{A}(z)$ has growth order $\rho\notin\mathbb{N}$, then it is automatically of minimal growth order, that is $\rho = \kappa$.  

As we have already mentioned earlier, the properties of the spectral $\z$-function, for instance the location and order of its poles, are very important and the integral representation \eqref{3.2} is a particularly suitable tool for this analysis. In fact, it provides a convenient starting point for the process of analytic continuation of the spectral $\z$-function in the complex plane (a process described thoroughly in, for example, \cite{FGKS21,FPS25,FPS25a}). 

To fully benefit from the integral representation  provided in \eqref{3.2} of Theorem \ref{Thrm3.4}, one would ideally want an explicit characteristic function $F_A(z)$ of \emph{minimal order}. In fact, if $F_{A}(z)$ is chosen to be of minimal growth order $\rho = \kappa$, then the representation \eqref{3.2} becomes valid in the largest possible region of the complex plane given by $\Re(s) > \kappa$, providing us with the optimal starting point for computing the analytic continuation to the region $\Re(s)  \leq \kappa$ as outlined in \cite{FPS25}. This observation is the main motivation for the present work.

While the construction of a characteristic function in the quasi-regular setting has been fully addressed in our previous work \cite{FPS25}, the general setting we consider here has not. As we will see below, the construction of characteristic functions of minimal growth order in the case of operators without trace-class resolvent (i.e., $\con \geq 1$)  proves to be much more difficult and interesting.

\begin{remark}
It turns out that the central parameter in our analysis will be $\con \in \mathbb N_0$ rather than the exponent of convergence $\kappa >0$ of $\sigma(T_A)$. Hence, most of our results are formulated under the assumption that resolvents $(T_A-zI)^{-1}$, $z \in \bbC \setminus \sigma(T_A)$ are in the Schatten $p$-class (also called $p$-Schatten--von Neumann class) with $p=\con+1$ but not $p = \con$. 
\end{remark}

\begin{remark}\label{RemarkJacobi}
    The case of Jacobi operators with resolvents in some Schatten $p$-class has been studied recently in \cite{SS26}. The authors obtained formulas for characteristic functions analogous to \eqref{FormF0} below, see \cite[Sect.~4.4]{SS26}. Just as here, these formulas rely on the Hadamard factorization theorem for entire functions of finite order. A crucial difference, however, lies in the notion of \emph{truncation}. In the case of Jacobi matrices one obtains a \emph{finite-dimensional} matrix, while in the Sturm--Liouville case the truncation to some $(c,d) \subset (a,b)$ leads to a regular problem which is, however, still \emph{infinite-dimensional}.
\end{remark}

\subsection{Summary of the main results}\label{Sect:Results}

We first define local exponents of convergence for truncated problems, then state our main theorem on the construction of characteristic functions of minimal order:

\begin{definition}\label{DefKappa}
Assume Hypothesis \ref{h3a} holds and that the exponent of convergence, $\kappa$, associated with $\sigma(T_A)$ is finite. We define the \textup{exponent of convergence near $a$} $($resp., $b$$)$ as the exponent of convergence, $\kappa_a$ $($resp., $\kappa_b$$)$, associated with the eigenvalues of self-adjoint realizations of $\tau|_{(a,c)}$ $($resp., $\tau|_{(c,b)}$$)$, $c\in(a,b)$ $($which will neither depend on $c$ nor the particular self-adjoint realization$)$. In a similar fashion we define $\cona, \conb \in \mathbb N_0$.
\end{definition}

Our main result on the construction of characteristic functions is stated next. A refined version with more `symmetric' requirements for the principal and nonprincipal solutions appears in Theorem \ref{TheoremChar}. 
\begin{theorem}\label{MainTheorem1}
    Let $\kappa$, $\cona$, $\conb$ be as in Definition \ref{DefKappa}. Then there exists a solution $\varphi_a(z,x)$ of $\tau f = zf$ which is principal at the endpoint $x = a$ and entire in $z$ of (minimal) order $\kappa_a$. Furthermore, there exists a solution $\theta_b(z,x)$ of $\tau f = zf$ which is nonprincipal at $x = b$ such that for all tuples $z_1, \dots, z_n \in \mathbb C$, and $w_1, \dots, w_n \in \mathbb C$ with $\sum_{j=1}^n z_j^\ell = \sum_{j=1}^n w_j^\ell$ for $\ell =1, \dots, \conb$ we have
    \begin{align}\label{TTTT}
        \lim_{x \uparrow b} \frac{\theta_b(z_1, x) \cdots  \theta_b(z_n,x)}{\theta_b(w_1, x) \cdots \theta_b(w_n ,x)} = 1.
    \end{align}
    With this choice, a characteristic function $F_0(z)$ of the Friedrichs realization of minimal order $\kappa = \max \lbrace \kappa_a, \kappa_b \rbrace$ is then given by the formula
    \begin{align}\label{FormF0}
        F_0(z) = \lim_{x \uparrow b} \frac{\varphi_a(z,x)}{\theta_b(z,x)},\quad z\in\bbC.
    \end{align}
\end{theorem}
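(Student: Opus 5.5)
Fix an interior point $c\in(a,b)$. The plan is to build $\varphi_a$ and $\theta_b$ from the truncated problems on $(a,c)$ and $(c,b)$, and to read off their growth in $z$ from the Hadamard factorization.

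\emph{Step 1: the principal solution at $a$.} Start from any principal solution $\tilde\varphi_a(z,x)$ at $x=a$, normalized so that it is entire in $z$. One can use a generalized boundary value at $a$, as in the quasi-regular theory \cite{FPS25}. At the regular point $c$, the pair $\tilde\varphi_a(z,c)$, $\tilde\varphi_a^{[1]}(z,c)$ (quasi-derivative) has no common zeros. Their zeros are the Dirichlet and Neumann eigenvalues, respectively, of a self-adjoint realization of $\tau|_{(a,c)}$ with the Friedrichs-type condition at $a$. Both zero sets have exponent of convergence $\kappa_a$ and rank $\cona$.

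Let $H^D(z)$ be the Hadamard product with factors $E(z/\lambda,\cona)$ over the Dirichlet eigenvalues. Then $\tilde\varphi_a(z,c)=e^{\omega(z)}H^D(z)$, with $\omega$ entire; $\omega$ is a polynomial only if $\tilde\varphi_a$ has finite order. This finiteness is the delicate point, and I would obtain it as follows. The Weyl function $m_a(z)=\tilde\varphi_a^{[1]}(z,c)/\tilde\varphi_a(z,c)$ is Herglotz, so it has a Mittag-Leffler (Krein) representation as a ratio of two Hadamard products $H^N/H^D$ of rank $\cona$, times $e^{q(z)}$ with $q$ a polynomial of degree at most $\cona$. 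This uses that the relevant resolvents are in the Schatten $(\cona+1)$-class.

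Define
\[
\varphi_a(z,x)=\frac{H^D(z)}{\tilde\varphi_a(z,c)}\,\tilde\varphi_a(z,x).
\]
This is still principal, since it is a $z$-dependent multiple of $\tilde\varphi_a$, and it is entire because the zeros cancel. At $x=c$ its Cauchy data are $\bigl(H^D(z),\,H^D(z)m_a(z)\bigr)$, both of order $\kappa_a$. By variation of constants from $c$, using the regular fundamental system on compacts, which has order $1/2\le\kappa_a$ (after handling the case $\kappa_a<1/2$ separately), $\varphi_a(z,x)$ is of order $\kappa_a$ for every $x$. Minimality follows from $\rho\ge\kappa_a$.

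\emph{Step 2: the nonprincipal solution at $b$ and \eqref{TTTT}.} Define $\theta_b$ analogously on $(c,b)$ with the rank $\conb$ normalization. The key is the asymptotics of Theorem \ref{TheoremUV} (as announced in the introduction): after normalization,
\[
\ln\theta_b(z,x)=\ln\theta_b(0,x)+\sum_{\ell=1}^{\conb} z^\ell A_\ell(x)+o(1)\quad\text{as }x\uparrow b,
\]
locally uniformly in $z$. This mirrors the exponential part of $E(\cdot,\conb)$: the higher Taylor coefficients of $\ln\theta_b$ are traces of powers of truncated Green operators, and these converge because of the Schatten $(\conb+1)$ property. Only the first $\conb$ terms may diverge. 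Given this expansion, the log of the ratio in \eqref{TTTT} is
\[
\sum_\ell A_\ell(x)\Bigl(\sum_j z_j^\ell-\sum_j w_j^\ell\Bigr)+o(1)=o(1),
\]
which proves \eqref{TTTT}.

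\emph{Step 3: the characteristic function $F_0$.} Write $\varphi_a(z,x)=\alpha(z)\theta_b(z,x)+\beta(z)\psi_b(z,x)$, with $\psi_b$ principal at $b$. Since $\psi_b/\theta_b\to0$ as $x\uparrow b$, the limit in \eqref{FormF0} equals $\alpha(z)$, a Wronskian $W(\varphi_a,\psi_b)/W(\theta_b,\psi_b)$ up to normalization. It vanishes exactly when $\varphi_a$ is principal at $b$, that is, on the Friedrichs spectrum, with multiplicity one by simplicity of the spectrum. Entireness follows by computing $\alpha(z)$ as a Wronskian at $c$ of entire data. Finite order $\max\{\kappa_a,\kappa_b\}$ then comes from the order bounds of Steps 1 and 2, and minimality from $\kappa=\max\{\kappa_a,\kappa_b\}$ by decomposition of the spectral counting function.

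\emph{Main obstacle.} I expect the main obstacle to be the expansion in Step 2, together with the finite-order control in Step 1 in the case $\con\ge1$. The regularized trace sums converge only after subtracting the $\conb$ divergent polynomial terms, and one must show the remainder is $o(1)$ uniformly in $z$ on compacts. I would attack this by expressing $\partial_z^{k}\ln\theta_b(z,x)$ for $k>\conb$ as $\operatorname{tr}$ of $G_{(c,x)}(z)^k$, where $G_{(c,x)}$ is the Green operator of the truncated problem, and passing to the limit using Schatten-norm convergence $G_{(c,x)}\to G_{(c,b)}$ in the $(\conb+1)$-class.
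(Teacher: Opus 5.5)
Your Step 1 is a valid alternative to the paper's construction. The paper instead takes the $\varepsilon\to0$ limit of solutions vanishing at $a+\varepsilon$, regularized by $\exp\{\sum_{\ell=1}^{\cona}\frac{z^\ell}{\ell}\zeta(\ell;(a+\varepsilon,c))\}$, via Lemma \ref{LemmaEpsilon} and \eqref{ConstPhi}. In your version, interlacing of the zeros and poles of $m_a$ gives $\sum_k|\mu_k^{-\ell}-\lambda_k^{-\ell}|<\infty$. So the Chebotarev--Krein product does become $Ce^{q}H^N/H^D$ with $\deg q\le\cona$, and the factor $H^D/\tilde\varphi_a(\cdot,c)=e^{-\omega}$ is entire and zero-free. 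Two small points: at a limit point endpoint the entire principal solution comes from \cite[Lem.~2.2]{KST_IMRN}, not from generalized boundary values; and $\kappa_a<1/2$ never occurs, by domain monotonicity of the Dirichlet eigenvalues.

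\textbf{Step 3: the order bound does not follow.} You claim the order of $F_0=W(\varphi_a,\psi_b)/W(\theta_b,\psi_b)$ comes from ``the order bounds of Steps 1 and 2''. But Step 2 gives no order bound for $\theta_b$, and none can follow from \eqref{TTTT}. Replacing $\theta_b$ by $\theta_b+g(z)\psi_b$ with any entire $g$ preserves \eqref{TTTT}, so $\theta_b$ may have infinite order (Remark \ref{remarkTheta}; a minimal-order nonprincipal solution is Open Problem \ref{Prob:Nonprincipal}). The quantity that governs the order of $F_0$ is the zero-free entire function $W(\theta_b,\psi_b)=e^{f}$. Nothing in your argument shows that $f$ is a polynomial of degree at most $\kappa$.

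The paper avoids Wronskians here. By \eqref{NormTheta}, $F_0(z)=\lim_{x\uparrow b}\frac{\varphi_a(z,x)}{\theta_b(0,x)}\exp\{\sum_{\ell=1}^{\conb}\frac{z^\ell}{\ell}\zeta(\ell;(c,x))\}$. Each pre-limit function is entire of order at most $\max\{\kappa_a,\conb\}\le\kappa$, with zeros $\lambda_j(a,x)\downarrow\lambda_j$. Since $\lambda_j(a,x)\ge\lambda_j$ (after a shift making the spectrum positive), the sums $\sum_j\lambda_j(a,x)^{-\con-1}$ are uniformly bounded. This lets the Hadamard factorizations, including their polynomial exponential factors, pass to the limit (Lemma \ref{Lemma1-3}). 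Your Step 2 expansion with explicit polynomial corrections $\sum_\ell z^\ell A_\ell(x)$ is exactly the input such an argument needs, but you use it only to check \eqref{TTTT}.

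\textbf{Step 2: the normalization is not actually constructed.} Your identification of Taylor coefficients of $\ln$ with traces of powers of a truncated resolvent holds for $\varphi_c(z,x)$, the solution vanishing at $c$ (the Dirichlet problem on $(c,x)$). For that function the regularized limit is not $1$ but the Hadamard characteristic function $G_c(z)$ of $T_{(c,b)}$. Moreover, $\varphi_c(z,\cdot)$ is principal at $b$ exactly for $z\in\sigma(T_{(c,b)})$. Near these points $\varphi_c(\cdot,x)$ has zeros that accumulate as $x\uparrow b$. So your expansion cannot hold locally uniformly there, and $\varphi_c$ is not an entire solution that is nonprincipal for all $z$.

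What is missing is a normalization valid for all $z$ simultaneously. The paper does this in three moves. It divides by $G_c$ on neighbourhoods avoiding $\sigma(T_{(c,b)})$, moving $c$ to cover the remaining points. It then compares with an arbitrary entire nonprincipal $\tilde\theta_b$, showing that the regularized limit of $\tilde\theta_b$ is a zero-free entire function $h$. Finally it sets $\theta_b=\tilde\theta_b/h$, which yields \eqref{NormTheta} and hence \eqref{TTTT}.

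Your phrase ``after normalization'' hides exactly this step. Invoking Theorem \ref{TheoremUV} does not replace it: that theorem gives the asymptotics only up to a $z$-dependent constant, and its proof in the paper is this very construction.
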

Further conditions equivalent to $\varphi_a(z,x)$ being of minimal order are collected in Corollary \ref{Coriii}. 

While the condition on the principal (at $x = a$) solution $\varphi_a(z,x)$ might seem natural given that we are looking for a characteristic function of minimal order, the condition on the nonprincipal (at $x = b$) solution $\theta_b(z,x)$ in \eqref{TTTT} appears less motivated. It will follow naturally from the analysis in Section \ref{Sect::NonPrincipal} which is based on \emph{partial $\zeta$-values}, a central concept governing the behavior of (non)principal solutions at the respective endpoints. We turn to this topic next.

Consider the Sturm--Liouville differential expression $\tau|_{(c,d)}$ restricted to the interval $(c,d)$ with $a < c < d < b$. In this case, $\tau|_{(c,d)}$ will be regular at $c$ and $d$. We use the notation $\lambda_n(c,d)$ to denote the $n^{th}$ Dirichlet eigenvalue corresponding to $\tau|_{(c,d)}$. A crucial role in the present work will be played by the partial $\zeta$-values defined via
\begin{align}
    \zeta(\ell; (c,d)) = \sum_{n \in \bbN} \frac{1}{\lambda_n^\ell(c,d)}, \quad \ell \in \bbN
\end{align}
(the issue of zero eigenvalues will not be a problem in practice). Note that these are just the spectral $\zeta$-functions for the Dirichlet realization of $\tau|_{(c,d)}$ evaluated at positive integers -- see Proposition \ref{Prop:ExactZeta} for iterated integral formulas for these values. As $\tau|_{(c,d)}$ is regular at both endpoints, the eigenvalues grow quadratically, meaning that all partial $\zeta$-values are finite. However, as $\lambda_n(c,d) \downarrow \lambda_n(a,d)$ for $c \downarrow a$, see \cite[Ch.~10.8]{Ze05} (here $\downarrow$ signifies a monotonically \emph{decreasing} sequence), we will have that $\lim_{c \downarrow a} \zeta(\ell; (c,d)) = \infty$ if and only if $\ell \leq \cona$. Similarly, $\lim_{d \uparrow b} \zeta(\ell; (c,d)) = \infty$ if and only if $\ell \leq \conb$. We will be interested in the precise \emph{divergence rate} as $c \downarrow a$ or $d \uparrow b$ of these quantities. 

We will now state the key technical result on which most of our applications are based.
\begin{theorem}\label{TheoremUV}
     Let $\cona$ be as in Definition \ref{DefKappa}, and denote by $u_a(z,x)$, $v_a(z,x)$ principal resp.~nonprincipal solutions of $\tau f =zf$ at the endpoint $x = a$. Then as $x \downarrow a$ we have 
     \begin{align}\label{asymP}
         u_a(z,x) &\propto u_a(0,x) \exp{\Big\lbrace\sum_{\ell=1}^{\cona} \frac{z^\ell}{\ell} \zeta(\ell; (x,d))\Big\rbrace},  \\ \label{asymNP} v_a(z,x) &\propto v_a(0,x) \exp{\Big\lbrace-\sum_{\ell=1}^{\cona} \frac{z^\ell}{\ell} \zeta(\ell; (x,d))\Big\rbrace}.
     \end{align}
     The same result holds at the endpoint $x = b$, replacing $a$ by $b$, $\zeta(\ell; (x,d))$ by $\zeta(\ell; (c,x))$ and $\cona$ by $\conb$. 
\end{theorem}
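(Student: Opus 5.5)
Working near $x=a$ (the endpoint $b$ is symmetric), the idea is to write the ratio $u_a(z,x)/u_a(0,x)$ in terms of the Dirichlet spectrum of the truncated regular problem on $(x,d)$ and then apply Hadamard factorization. Fix $d\in(a,b)$ and assume, after a shift of the spectral parameter if needed, that $0$ is not a Dirichlet eigenvalue on $(x,d)$ for $x$ near $a$. Let $v_a(z,\cdot)$ be nonprincipal at $a$ and $\psi_d(z,\cdot)$ the solution with $\psi_d(z,d)=0$, $(p\psi_d')(z,d)=-1$; this $\psi_d$ is entire of order $1/2$ in $z$. The Dirichlet eigenvalues $\lambda_n(x,d)$ are exactly the zeros of $z\mapsto\psi_d(z,x)$. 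Since the regular problem has Weyl asymptotics, Hadamard with genus $0$ gives
\begin{align*}
\frac{\psi_d(z,x)}{\psi_d(0,x)}=\prod_{n\in\bbN}\Big(1-\frac{z}{\lambda_n(x,d)}\Big).
\end{align*}
Taking logarithms and expanding for small $|z|$, we can write this product as
\begin{align*}
\exp\Big\{-\sum_{\ell=1}^{\cona}\frac{z^\ell}{\ell}\zeta(\ell;(x,d))\Big\}\prod_n E\Big(\frac{z}{\lambda_n(x,d)},\cona\Big),
\end{align*}
and this identity extends to all $z$ by analytic continuation.

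The next step is to take $x\downarrow a$ in the Weierstrass product. Each $\lambda_n(x,d)\downarrow\lambda_n(a,d)$, the Dirichlet/Friedrichs eigenvalues of $\tau|_{(a,d)}$, and $\sum_n|\lambda_n(a,d)|^{-\cona-1}<\infty$. By monotonicity we have $|\lambda_n(x,d)|^{-\cona-1}\le|\lambda_n(a,d)|^{-\cona-1}$ for large $n$. Together with the bound $|\log E(w,p)|\le C|w|^{p+1}$ for $|w|\le 1/2$, dominated convergence shows that the product converges locally uniformly to the entire function $\Pi(z)=\prod_n E(z/\lambda_n(a,d),\cona)$.

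We then identify the limit. Since $\psi_d(z,\cdot)$ is a solution, it is a combination $\psi_d(z,x)=\alpha(z)u_a(z,x)+\beta(z)v_a(z,x)$. The key fact to prove is that $\psi_d(z,x)/v_a(z,x)$ tends to a constant $\beta(z)$ as $x\downarrow a$, because $u_a/v_a\to0$. Hence
\begin{align*}
v_a(z,x)\sim \frac{\psi_d(0,x)}{\beta(z)}\exp\Big\{-\sum_{\ell=1}^{\cona}\frac{z^\ell}{\ell}\zeta(\ell;(x,d))\Big\}\Pi(z),
\end{align*}
and $\psi_d(0,x)\sim\beta(0)v_a(0,x)$. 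This gives \eqref{asymNP}, with the proportionality constant $\Pi(z)\beta(0)/\beta(z)$ independent of $x$. One caveat: if $\beta(z)=0$, then $\psi_d$ is itself principal, so we must choose $v_a$ appropriately or treat those $z$ by continuity. For \eqref{asymP}, apply the Wronskian identity $W(u_a(z),v_a(z))=\text{const}$, which gives $u_a(z,x)=v_a(z,x)\int_a^x dt/(p\,v_a(z,t)^2)$. The exponential factor then has to be pulled out of the integral.

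I expect this last step to be the main obstacle: pulling $\exp\{2\sum_\ell z^\ell\zeta(\ell;(t,d))/\ell\}$ out of $\int_a^x dt/(p v_a^2)$ with only a ratio-type asymptotic, not pointwise control. A cleaner route is to run the same Hadamard argument directly for $u_a$. Use the solution $\phi(z,\cdot)$ on $(x,d)$ normalized at $x$ by the Dirichlet condition and take $d$ as the moving endpoint; alternatively, use a reciprocal/Wronskian argument in which $1/(p\,u_a v_a)$ is related to the derivative of $u_a/v_a$. Either way, the sign flip of the exponent comes from the Wronskian normalization $u_a v_a\approx$ a product that is $z$-independent to leading order. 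I would first try to prove that $u_a(z,x)v_a(z,x)/(u_a(0,x)v_a(0,x))$ has a finite nonzero limit via the Riccati/Green's function diagonal $G(z;x,x)$ of the Friedrichs realization on $(a,d)$, then divide by \eqref{asymNP}.
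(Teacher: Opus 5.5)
\textbf{Nonprincipal half.} Your argument for \eqref{asymNP} is essentially the paper's argument (Section~\ref{Sect::NonPrincipal}, mirrored to the endpoint $a$). It uses a solution with a Dirichlet condition at an interior point, the genus-zero Hadamard product over $\lambda_n(x,d)$, regularization by $\exp\{-\sum_{\ell\le\cona}\frac{z^\ell}{\ell}\zeta(\ell;(x,d))\}$, dominated convergence, and comparison with an arbitrary nonprincipal solution.

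One step needs repair. At a Friedrichs eigenvalue $z$ of $\tau|_{(a,d)}$ you have $\beta(z)=0=\Pi(z)$. ``Treating those $z$ by continuity'' says nothing about the limit $x\downarrow a$, because nothing is uniform in $z$. The paper instead moves the interior point so that $z$ is no longer an eigenvalue. It then uses that this changes $\zeta(\ell;\cdot)$ only by a term with a finite limit. Likewise you need $\beta(0)\neq0$, not merely $\psi_d(0,x)\neq0$.

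\textbf{Principal half: the gap.} You do not prove \eqref{asymP}. The Wronskian route fails for the reason you identify: with only a ratio asymptotic for $v_a(z,t)$, the weight $e^{2S(t)}$, where $S(t)=\sum_{\ell\le\cona}\frac{z^\ell}{\ell}\zeta(\ell;(t,d))$, is unbounded as $t\downarrow a$. It cannot be pulled out of $\int_a^x dt/(p\,v_a^2)$ without slow-variation information you do not have.

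The missing idea is a link between two things. On one side is the $z$-dependence of the principal solution at fixed $x$, whose zeros in $z$ are the eigenvalues $\lambda_j(a,x)$ of the \emph{inner} interval and tend to $\infty$. On the other side are the partial $\zeta$-values of the \emph{complementary} interval. The paper supplies this link in two steps.
\begin{enumerate}
\item It builds $\varphi_a$ as the $\varepsilon\to0$ limit of solutions vanishing at $a+\varepsilon$, renormalized by $\exp\{\sum_\ell\frac{z^\ell}{\ell}\zeta(\ell;(a+\varepsilon,c))\}$ so that their values at $c$ stay bounded. Lemma~\ref{LemmaEpsilon} then gives a principal limit. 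This yields $\varphi_a(z,x)=\varphi_a(0,x)\prod_j E(z/\lambda_j(a,x),\cona)\exp\{\sum_\ell\frac{z^\ell}{\ell}\widetilde\zeta(\ell;(x,c))\}$, with $\widetilde\zeta(\ell;(x,c))=\lim_{\varepsilon\to0}[\zeta(\ell;(a+\varepsilon,c))-\zeta(\ell;(a+\varepsilon,x))]$. The product tends to $1$ as $x\downarrow a$.
\item Proposition~\ref{Prop:RankOne} shows $\widetilde\zeta(\ell;(x,c))-\zeta(\ell;(x,c))\to0$. It uses that $T_{(a+\varepsilon,x)}\oplus T_{(x,c)}$ is a rank-one resolvent perturbation of $T_{(a+\varepsilon,c)}$, so the eigenvalues interlace and the tail sums telescope.
\end{enumerate}

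Your Green's-function suggestion could be completed on the same principle, though I have not checked every detail.
\begin{itemize}
\item Away from $\sigma(T_{(a,d)})$, $z\mapsto G(z;x,x)$ is a meromorphic Herglotz function. Its poles are $\sigma(T_{(a,d)})$ and its zeros are $\sigma(T_{(a,x)})\sqcup\sigma(T_{(x,d)})$, so it has an interlacing product representation.
\item $G(z;x,x)/G(0;x,x)\to1$ should follow because the zeros converge to the poles, with telescoping control of the tail.
\item With $W(v_a,u_a)=1$ one has $G(z;x,x)\sim u_a(z,x)v_a(z,x)$. Dividing by \eqref{asymNP} would then give \eqref{asymP}.
\end{itemize}
This rank-one/interlacing mechanism is exactly the ingredient your proposal does not supply, so as written \eqref{asymP} remains unproved.
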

The above result generalizes \cite[Theorem 4.2]{PS24} which roughly states that if $\cona = 0$ (resp.~$\conb = 0$), that is, the resolvent of the self-adjoint realizations of $\tau|_{(a,c)}$ (resp.~$\tau|_{(c,b)}$) is trace class, the behavior of (non)principal solutions will not depend on the spectral parameter. 

The proof of the asymptotic formula \eqref{asymNP} for the nonprincipal solution is surprisingly simple and is based on the Hadamard factorization theorem for entire functions of finite order, see Section \ref{Sect::NonPrincipal}. In Remark \ref{RemarkAltTheta} we even mention an alternative way of deriving it using, instead, a power-series approach. The analogous formula \eqref{asymP} for the principal solution uses a similar idea, but requires an additional identity for partial $\zeta$-values to hold (see Prop.~\ref{Prop:RankOne}).

Regarding further applications of Theorem \ref{TheoremUV}, formulas \eqref{asymP}, \eqref{asymNP} can be viewed as a generalization of the classic Liouville--Green (or WKB) approximations which usually requires smoothness assumptions on the coefficients, see \cite[Ch.~6]{Ol97}. A detailed comparison of those two methods is provided in Section \ref{Sect:LG}. One consequence of this comparison is the existence of \emph{approximate} integral formulas for partial $\zeta$-values in cases where the Liouville--Green approximation holds:
\begin{theorem}
    Assume that $\tau$ is in the Schr\"odinger form, that is $\tau = -\frac{d^2}{dx^2} + q$, with $x \in (a,\infty)$, and that the leading Liouville--Green approximation holds as $x \to \infty$ (see Hypothesis \ref{Hypo::LG} for the precise statement). Then for any choice of $c, c'\in (a,\infty)$, and $\ell \in \bbN_+$ we have that\\[1mm]
   $(i)$ $\lim_{x \to \infty} \int_c^x q(t)^{\frac{1}{2} - \ell} dt = \infty$ if and only if $\ell \in \lbrace 1, \dots,\coninf \rbrace$;\\[1mm]
       $(ii)$ $\lim_{x \to \infty} \Big[\frac{(2\ell-3)!!}{2^\ell (\ell-1)!}\int_{c}^x q(t)^{\frac{1}{2}-\ell} dt - \zeta(\ell; (c',x))\Big] \in \bbR$, for $\ell \in \lbrace 1, \dots,\coninf \rbrace$.
\end{theorem}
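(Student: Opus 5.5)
The plan is to compare Theorem \ref{TheoremUV} at $b=\infty$ with the Liouville--Green approximation and read off the spectral coefficients. By Hypothesis \ref{Hypo::LG}, for $z$ in a neighborhood of $0$ there are solutions
\begin{align*}
f_\pm(z,x) = (q(x)-z)^{-1/4}\exp\Big\{\pm\int_c^x (q(t)-z)^{1/2}\,dt\Big\}\,(1+o(1)),\quad x\to\infty,
\end{align*}
with $f_-$ recessive (principal) and $f_+$ dominant (nonprincipal). The same holds at $z=0$. I would then take the ratio
\begin{align*}
\frac{f_+(z,x)}{f_+(0,x)} = \Big(1-\frac{z}{q(x)}\Big)^{-1/4}\exp\Big\{\int_c^x\big[(q-z)^{1/2}-q^{1/2}\big]dt\Big\}(1+o(1)).
\end{align*}
Since $q\to\infty$ (needed for discreteness of the spectrum), the prefactor tends to $1$. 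Next I would expand
\begin{align*}
(q-z)^{1/2}-q^{1/2} = -\sum_{\ell\ge1} \frac{(2\ell-3)!!}{2^\ell\,\ell!}\,z^\ell q^{1/2-\ell}.
\end{align*}
The binomial coefficient $\binom{1/2}{\ell}(-1)^\ell$ equals $-\frac{(2\ell-3)!!}{2^\ell \ell!}$. Writing this as $\frac{z^\ell}{\ell}\cdot\frac{(2\ell-3)!!}{2^\ell(\ell-1)!}$ reproduces exactly the constant in (ii).

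Theorem \ref{TheoremUV} at $b$ gives $v_\infty(z,x)/v_\infty(0,x) \sim C(z)\exp\{-\sum_{\ell=1}^{\coninf}\frac{z^\ell}{\ell}\zeta(\ell;(c',x))\}$. Since $f_+$ is nonprincipal, it is a nonprincipal solution, and any two nonprincipal solutions differ asymptotically by a constant factor. Taking logarithms and comparing, I obtain that, for each fixed small $z$,
\begin{align*}
\sum_{\ell\ge1}\frac{z^\ell}{\ell}\Big[c_\ell\int_c^x q^{1/2-\ell}dt\Big] - \sum_{\ell=1}^{\coninf}\frac{z^\ell}{\ell}\zeta(\ell;(c',x))
\end{align*}
converges as $x\to\infty$, where $c_\ell = \frac{(2\ell-3)!!}{2^\ell(\ell-1)!}$. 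Here I have to be careful with the sign convention. Both sides must be consistent, since the partial $\zeta$-values are positive while $\int (q-z)^{1/2}-q^{1/2}$ has the opposite sign. I would settle this by also doing the principal solution $f_-$ with \eqref{asymP} and checking that the signs match.

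To separate the powers of $z$, I would use that the limit exists for all $z$ in an open set. For each $x$ the difference $D_x(z)$ is analytic in $z$ (the tail $\ell>\coninf$ sums, and $q^{1/2-\ell}\le q_0^{-\ell}q^{1/2}$ controls it). Convergence on a disk of real $z$, say $z\in(-\delta,\delta)$, then determines the coefficients once I know $D_x$ stays locally bounded. The polynomial part has degree $\coninf$. Taking $\coninf+1$ distinct small values of $z$ and inverting a Vandermonde matrix shows that each coefficient combination $c_\ell\int_c^x q^{1/2-\ell} - \zeta(\ell;(c',x))$ converges for $\ell\le\coninf$. The tail terms $\ell>\coninf$ must be handled simultaneously, and that forces the split between (i) and (ii). This proves (ii).

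For (i), the "if" direction follows from (ii) together with $\zeta(\ell;(c',x))\to\infty$ for $\ell\le\coninf$, which is stated before Theorem \ref{TheoremUV}. For "only if" I would argue as follows. If $\int^\infty q^{1/2-\ell}=\infty$ for some $\ell>\coninf$, then by monotonicity of $q^{1/2-\ell}$ in $\ell$ (as $q\ge1$ eventually) it diverges for $\ell=\coninf+1$. Then the $z^{\coninf+1}$ term in the exponent of $f_+(z,x)/f_+(0,x)$ would be unbounded. This contradicts Theorem \ref{TheoremUV}, whose right side involves only powers up to $\coninf$ together with convergent quantities.

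The main obstacle is the uniformity needed to interchange $x\to\infty$ with the expansion in $z$. The $o(1)$ in the Liouville--Green error has to be uniform for $z$ in a small disk, and the tail $\sum_{\ell>\coninf}$ has to converge uniformly in $x$ once the $(\coninf+1)$ integral is known finite. I would address this by assuming the error control in Hypothesis \ref{Hypo::LG} is locally uniform in $z$ (as in Olver's theorem) and by using the decomposition above with the Vandermonde argument.
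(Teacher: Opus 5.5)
There is a genuine gap, and it sits at the step that carries the content of the theorem: proving $\int_c^\infty q(t)^{-\frac12-\coninf}\,dt<\infty$.

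Your comparison of the Liouville--Green asymptotics with Theorem \ref{TheoremUV} is fine. For each small real $\lambda$, with $c$ chosen so that $q\ge q_0>2|\lambda|$ on $(c,\infty)$, it gives that
\[
P_x(\lambda)+R_x(\lambda):=\sum_{\ell=1}^{\coninf}\frac{\lambda^\ell}{\ell}\big[\xi_\ell(x)-\zeta(\ell;(c',x))\big]+\sum_{\ell>\coninf}\frac{\lambda^\ell}{\ell}\,\xi_\ell(x),\qquad \xi_\ell(x)=c_\ell\int_c^x q(t)^{\frac12-\ell}\,dt,
\]
has a finite limit as $x\to\infty$. A priori, however, neither the coefficients of $P_x$ nor the tail $R_x$ are bounded in $x$. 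This makes your argument circular:
\begin{itemize}
\item Your Vandermonde step for (ii) needs $R_x$ to converge.
\item Your `only if' argument for (i) (``the $z^{\coninf+1}$ term would be unbounded, contradicting Theorem \ref{TheoremUV}'') needs the coefficients of $P_x$ to be bounded, which is (ii) itself. For each fixed $\lambda$, a divergent $\lambda^{\coninf+1}\xi_{\coninf+1}(x)$ could be compensated by divergent lower-order coefficients, so there is no contradiction yet.
\end{itemize}
The sentence ``the tail terms must be handled simultaneously'' is exactly what is missing. Your proposed remedies do not close the gap:
\begin{itemize}
\item Local boundedness of $D_x$ on a complex disk is, by Cauchy's estimates, equivalent to boundedness of its Taylor coefficients, i.e.\ to the conclusion.
\item A locally uniform Liouville--Green error for complex $z$ is an extra hypothesis beyond Hypothesis \ref{Hypo::LG}, which is pointwise in real $\lambda$, and it is not needed.
\end{itemize}

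The missing idea, used in the paper, is to combine the relation at finitely many real points so that the degree-$\le\coninf$ part cancels, then rescale so the $(\coninf+1)$-st term dominates the rest of the tail. For instance, pick distinct nonzero $\lambda_1,\dots,\lambda_{\coninf+1}$ and weights $w_j$ with $\sum_j w_j\lambda_j^m=0$ for $1\le m\le\coninf$ and $\sum_j w_j\lambda_j^{\coninf+1}=1$. Then $\sum_j w_j P_x(\varepsilon\lambda_j)=0$. The bound $q^{\frac12-\ell}\le q_0^{-(\ell-\coninf-1)}q^{-\frac12-\coninf}$, together with $c_\ell\le\frac12$, gives, uniformly in $x$,
\[
\varepsilon^{-\coninf-1}\sum_j w_j R_x(\varepsilon\lambda_j)=\frac{\xi_{\coninf+1}(x)}{\coninf+1}\big(1+O(\varepsilon)\big).
\]
The left side converges as $x\to\infty$, so fixing $\varepsilon$ small shows $\int_c^\infty q^{-\frac12-\coninf}<\infty$. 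The paper does the same thing with power sums vanishing in all degrees $\le\coninf$ but one, followed by $\varepsilon\to 0$.

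Only after this step does $R_x(\lambda)$ converge. Then $P_x(\lambda)$ converges for every small $\lambda$, and your Vandermonde inversion with $\coninf$ distinct nonzero points gives (ii). The rest of your treatment of (i), via divergence of $\zeta(\ell;(c',x))$ and monotonicity in $\ell$, is then correct. Only pointwise information at finitely many real $\lambda$ is used, so no uniformity in $z$ is required.

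Your sign worry is also unfounded: for the nonprincipal solution both exponents carry a minus sign and agree.
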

For our final application found in Section \ref{Sect:EigenConv}, we turn to the problem of the convergence rates of individual eigenvalues $\lambda_n(a,x) \to \lambda_n(a,b) = \lambda_n$ as $x \uparrow b$ under the additional assumption that $\cona = 0$. The following results show that these convergence rates for different $n$ are intimately related to each other via the partial $\zeta$-values. 
\begin{proposition}\label{PropJM}
    Assume $\cona = 0$ and $\conb \in \bbN$. Then 
    \begin{align}
        \lambda_j(a,x) - \lambda_j \propto \big(\lambda_m(a,x) - \lambda_m\big)\exp{\Big\lbrace2\sum_{\ell=1}^{\conb} \frac{\lambda_j^\ell - \lambda_m^\ell}{\ell} \zeta(\ell; (a,x))\Big\rbrace}, \quad \text{for} \ x \uparrow b. 
    \end{align}
\end{proposition}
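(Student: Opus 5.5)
The plan is to express the eigenvalue defect $\lambda_j(a,x)-\lambda_j$ through the principal solution $u_a(\lambda_j,\cdot)$ and the solution $u_b(\lambda_j,\cdot)$ principal at $b$, and then to read off the $x$-dependence from Theorem \ref{TheoremUV} at the endpoint $b$. Since $\cona=0$, the endpoint $a$ contributes no spectral dependence. We take the principal solution $\varphi(z,x)$ at $a$, entire in $z$, normalized so that no $z$-dependence is introduced at $a$. Then $\lambda_n(a,x)$ are the zeros of $z\mapsto\varphi(z,x)$, which converge to the zeros $\lambda_n$ of $F_0$.

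\textbf{Step 1: a first-order formula for the defect.} Since $\varphi(\lambda_j(a,x),x)=0$, a mean-value/Taylor argument in $z$ gives
\begin{align*}
\lambda_j(a,x)-\lambda_j\sim-\frac{\varphi(\lambda_j,x)}{\partial_z\varphi(\lambda_j,x)}.
\end{align*}
At $z=\lambda_j$ the function $\varphi(\lambda_j,\cdot)$ is an eigenfunction, so it is principal at $b$ as well. Hence $\varphi(\lambda_j,x)=c_j\,u_b(\lambda_j,x)$.

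To handle $\partial_z\varphi(\lambda_j,x)$, we use the standard Lagrange identity
\begin{align*}
\int_a^x \varphi(\lambda_j,t)^2 r(t)\,dt=W\big(\partial_z\varphi(\lambda_j,\cdot),\varphi(\lambda_j,\cdot)\big)(x).
\end{align*}
The left side tends to $\|\varphi(\lambda_j)\|^2>0$. Writing $\partial_z\varphi=\alpha u_b+\beta v_b$, with $v_b$ nonprincipal and $W(u_b,v_b)=1$, shows that $\beta$ is a nonzero constant. Consequently $\partial_z\varphi(\lambda_j,x)\propto v_b(\lambda_j,x)$ modulo a negligible $u_b$ term, which is negligible because $u_b=o(v_b)$. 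Combining these gives
\begin{align*}
\lambda_j(a,x)-\lambda_j\propto \frac{u_b(\lambda_j,x)}{v_b(\lambda_j,x)}.
\end{align*}

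\textbf{Step 2: insert the asymptotics.} Theorem \ref{TheoremUV} at the endpoint $b$ gives
\begin{align*}
u_b(\lambda_j,x)\propto u_b(0,x)\,e^{S_j(x)},\qquad v_b(\lambda_j,x)\propto v_b(0,x)\,e^{-S_j(x)},
\end{align*}
where $S_j(x)=\sum_{\ell=1}^{\conb}\frac{\lambda_j^\ell}{\ell}\zeta(\ell;(c,x))$. Therefore
\begin{align*}
\lambda_j(a,x)-\lambda_j\propto \frac{u_b(0,x)}{v_b(0,x)}\,e^{2S_j(x)}.
\end{align*}
Dividing the expressions for $j$ and $m$ cancels the $j$-independent factor $u_b(0,x)/v_b(0,x)$. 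Finally, we replace $\zeta(\ell;(c,x))$ by $\zeta(\ell;(a,x))$. This is allowed because $\cona=0$: the difference stays bounded as $x\uparrow b$, since the divergence of the partial $\zeta$-values comes only from the $b$ end. Indeed, $\zeta(\ell;(a,x))$ is finite because $\cona=0$ makes $\zeta(1;\cdot)$ finite near $a$, and domain monotonicity/additivity of the iterated-integral formulas in Proposition \ref{Prop:ExactZeta} controls the difference. The resulting bounded factor is absorbed into $\propto$.

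\textbf{Main obstacle.} The hard part is rigorously justifying the first-order formula in Step 1. We need $\lambda_j(a,x)-\lambda_j$ to be asymptotic to $-\varphi/\partial_z\varphi$ at $\lambda_j$, which requires control of $\partial_z^2\varphi$ on a shrinking neighbourhood relative to $\partial_z\varphi$. The tool we would try first is Hurwitz/Rouché applied to the normalized function $z\mapsto\varphi(z,x)/\theta_b(z,x)\to F_0(z)$, using the locally uniform convergence available from Theorem \ref{MainTheorem1}. Because the $x$-dependence is common across $z$ by \eqref{TTTT}, this gives $\lambda_j(a,x)-\lambda_j\sim -\varphi(\lambda_j,x)/(\theta_b(\lambda_j,x)F_0'(\lambda_j))$. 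In this form the exponential factors arise from the ratio $u_b/\theta_b$, and we again apply Theorem \ref{TheoremUV}.
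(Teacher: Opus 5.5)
Your architecture matches the paper's: reduce to $\lambda_j(a,x)-\lambda_j\propto u_b(\lambda_j,x)/v_b(\lambda_j,x)$, then insert Theorem \ref{TheoremUV} at $b$. However, Step 1 as written has a genuine gap. The function $\partial_z\varphi(\lambda_j,\cdot)$ does not solve $\tau f=\lambda_j f$: differentiating $\tau\varphi=z\varphi$ gives $(\tau-\lambda_j)\partial_z\varphi(\lambda_j,\cdot)=\varphi(\lambda_j,\cdot)\neq0$. So there is no decomposition $\partial_z\varphi=\alpha u_b+\beta v_b$ with constant coefficients. Your own identity $W(\partial_z\varphi,\varphi)(x)=\int_a^x\varphi^2r\,dt$ varies with $x$, which already contradicts a constant $\beta$. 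Variation of constants does give $\beta(x)\to-\|\varphi(\lambda_j,\cdot)\|^2/c_j\neq0$. But then the $u_b$-coefficient is $\alpha(x)=c_j\int_{x_0}^x u_bv_br\,dt+\mathrm{const}$, which diverges like $\zeta(1;(c,x))$ when $\conb\geq1$ (the $b$-version of Lemma \ref{LemmaInt1}). Hence $u_b=o(v_b)$ alone does not make that term negligible; you would need $\frac{u_b}{v_b}\int_{x_0}^x u_bv_br\,dt\to0$, obtainable e.g.\ from the $b$-analogue of \eqref{EqInt1} plus Cauchy--Schwarz. Independently, the first-order Taylor formula needs control of $\partial_z^2\varphi$ near $\lambda_j$, which you do not supply. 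Your Hurwitz fallback is the right kind of fix, but Theorem \ref{MainTheorem1} only asserts a pointwise limit. Moreover, $z\mapsto\varphi(z,x)/\theta_b(z,x)$ need not be holomorphic on a fixed disc around $\lambda_j$, because of zeros of $\theta_b(\cdot,x)$. The fallback does work with the entire family $g_x(z)=\frac{\varphi(z,x)}{\varphi(0,x)}\exp\{\sum_{\ell=1}^{\conb}\frac{z^\ell}{\ell}\zeta(\ell;(a,x))\}=\prod_n E(z/\lambda_n(a,x),\conb)$. This converges locally uniformly to $H_0$ by Section \ref{Sect:Trace} (Proposition \ref{Prop:trace}), and $H_0'(\lambda_j)\neq0$ by simplicity of the spectrum.

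Once you have this product, however, no derivatives or Rouch\'e argument are needed, and this is exactly the paper's proof. Since $\cona=0$, $\varphi_a(z,x)=\varphi_a(0,x)\prod_n(1-z/\lambda_n(a,x))$, and evaluating at $z=\lambda_m$ isolates one factor: $g_x(\lambda_m)=E(\lambda_m/\lambda_m(a,x),\conb)\prod_{n\neq m}E(\lambda_m/\lambda_n(a,x),\conb)$. The second factor tends to $\prod_{n\neq m}E(\lambda_m/\lambda_n,\conb)\neq0$, by dominated convergence since $\lambda_n(a,x)\downarrow\lambda_n$ and $\sum_n|\lambda_n|^{-\conb-1}<\infty$. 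The first factor is $\propto\lambda_m(a,x)-\lambda_m$. Normalize $\theta_b$ by \eqref{NormTheta} with $c=a$ (legitimate since $\cona=0$) and take $\theta_b(0,\cdot)=\varphi_a(0,\cdot)$ (for $0\notin\sigma(T)$). Then $g_x(\lambda_m)\sim\varphi_a(\lambda_m,x)/\theta_b(\lambda_m,x)$, which is precisely the reduction you wanted. Finally, in Step 2 a merely bounded difference $\zeta(\ell;(a,x))-\zeta(\ell;(c,x))$ cannot be absorbed into $\propto$, since that requires the ratio to converge. What you need, and what is true, is that this difference converges as $x\uparrow b$. One way to see it is to compare the limits $H_0$ and $G_c$ from Sections \ref{Sect:Trace} and \ref{Sect::NonPrincipal} at $\conb$ real points off the spectra. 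Alternatively, work with $c=a$ throughout, as the paper does.
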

While the above result compares the convergence rates for \emph{distinct} $n \in \lbrace j,k \rbrace$, we also prove a bound on the convergence rate which holds for all $n \in \bbN$.
\begin{corollary}
Assume $\cona = 0$ and $\conb \in \bbN$. Then for any $K>0$ and $n \in \mathbb{N}$, we have
    \begin{align}
        \lambda_n(a,x) - \lambda_n = o\Big(\exp{\Big\lbrace-K\zeta(1; (a,x))\Big\rbrace} \Big), \quad \text{for} \ x \uparrow b.
    \end{align}
\end{corollary}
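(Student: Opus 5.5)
The plan is to deduce the corollary from Proposition \ref{PropJM} by comparing the $n$-th eigenvalue with a suitably chosen higher one. Fix $n\in\bbN$ and $K>0$, and set $j=n$ in Proposition \ref{PropJM}. Choose $m>n$ so large that
\begin{align}
\lambda_m>\max\{|\lambda_n|,0\} \quad\text{and}\quad 2(\lambda_m-\lambda_n)>K .
\end{align}
This is possible since $\lambda_m\to\infty$. Proposition \ref{PropJM} then gives, for $x$ close to $b$,
\begin{align}
|\lambda_n(a,x)-\lambda_n|\leq C\,|\lambda_m(a,x)-\lambda_m|\exp\Big\lbrace-2\sum_{\ell=1}^{\conb}\frac{\lambda_m^\ell-\lambda_n^\ell}{\ell}\,\zeta(\ell;(a,x))\Big\rbrace .
\end{align}
Here $|\lambda_m(a,x)-\lambda_m|\to0$, because the truncated eigenvalues converge monotonically to $\lambda_m$ (see \cite[Ch.~10.8]{Ze05}). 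In particular this factor is bounded.

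Next I control the terms with $\ell\geq2$. By the choice of $m$ we have $\lambda_m^\ell-\lambda_n^\ell\geq0$ for every $\ell\geq1$. Each partial $\zeta$-value $\zeta(\ell;(a,x))$ is bounded below uniformly in $x$. Indeed, $\lambda_k(a,x)\geq\lambda_k$, so only the finitely many indices $k$ with $\lambda_k\leq0$ can produce negative eigenvalues $\lambda_k(a,x)$. Their contributions to $\zeta(\ell;(a,x))$ stay bounded as $x\uparrow b$, assuming as in the paper that zero eigenvalues are harmless, or after excluding them. All remaining terms are positive. Hence for $\ell\geq2$,
\begin{align}
-\frac{\lambda_m^\ell-\lambda_n^\ell}{\ell}\,\zeta(\ell;(a,x))\leq C_\ell .
\end{align}
The exponent is therefore at most $-2(\lambda_m-\lambda_n)\zeta(1;(a,x))+C'$. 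If one prefers, one can also use $\zeta(\ell)\leq\lambda_1^{1-\ell}\zeta(1)$ when the spectrum is positive, but the lower bound above suffices.

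Combining these estimates gives
\begin{align}
|\lambda_n(a,x)-\lambda_n|\,e^{K\zeta(1;(a,x))}\leq C''\,|\lambda_m(a,x)-\lambda_m|\,e^{-(2(\lambda_m-\lambda_n)-K)\zeta(1;(a,x))}.
\end{align}
Since $\conb\geq1$, we have $\zeta(1;(a,x))\to\infty$ as $x\uparrow b$. The coefficient $2(\lambda_m-\lambda_n)-K$ is strictly positive, so the right-hand side tends to $0$. This is exactly the $o$-statement. Strictly speaking, even the factor $|\lambda_m(a,x)-\lambda_m|\to0$ is not needed.

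The main point needing care is the sign and lower-boundedness of the higher partial $\zeta$-values $\zeta(\ell;(a,x))$, $2\leq\ell\leq\conb$, in the presence of possibly negative eigenvalues. These values may diverge as $x\uparrow b$. Choosing $m$ with $\lambda_m>|\lambda_n|$ ensures that all these divergent contributions enter with the favorable sign, so they only help the decay.
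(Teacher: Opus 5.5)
Your proof is correct and follows the paper's route: apply Proposition \ref{PropConv} with $j=n$ and choose $m$ so large that $2(\lambda_m-\lambda_n)>K$. The only difference is in the treatment of the $\ell\geq 2$ terms. The paper argues that the divergence of $\zeta(1;(a,x))$ dominates that of $\zeta(\ell;(a,x))$ for $\ell\geq2$, whereas you additionally require $\lambda_m>|\lambda_n|$, so those terms enter with a favorable sign and you only need $\zeta(\ell;(a,x))$ to be bounded below as $x\uparrow b$, which is a slightly more explicit justification.
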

The proofs of both results can be found in Section \ref{Sect:EigenConv}. We also offer numerical confirmation in the Laguerre example via the plots in Figure~\ref{Fig:Lag}.

As the partial $\zeta$-values play such an important role in all of our results, we conclude this introduction with the following proposition regarding the exact representations of these values:

\begin{proposition}\label{Prop:ExactZeta}
Assume Hypothesis \ref{Hypo:Trace} $(i)$. In addition, choose $c\in(a,b)$ such that the Friedrichs eigenvalues, $\lambda_j(c,x)$, of $\tau|_{(c,x)}$ are nonzero for all $x\in(c,b)$. Then the generating function of the partial $\zeta$-values $\zeta(\ell; (c,x)) = \sum_{j \in \bbN} \lambda_j(c,x)^{-\ell}$ is given by
\begin{align}\label{GenFct}
    -\frac{d}{dz}\ln(\varphi_c(z,x)) = \sum_{\ell = 1}^\infty \zeta(\ell; (c,x)) z^{\ell-1},
\end{align}
where $\varphi_c(z,x)$ is a solution to $\tau f = zf$ satisfying  $\varphi_c(z,c)=0$ and $\partial_x\varphi_c(z,x)|_{x = c} = 1$.
Furthermore, one has
\begin{equation}
\zeta(\ell; (c,x))=-\ell b_\ell(x),\quad \ell\in\bbN,
\end{equation}
where, with $v_c$ a nonprincipal solution at $x=c$ satisfying $W(v_c,u_c)=1$,
\begin{align}
& b_1(x)=\frac{\varphi_{c,1}(x)}{\varphi_{c,0}(x)},\quad b_j(x)=\frac{\varphi_{c,j}(x)}{\varphi_{c,0}(x)}-\sum_{k=1}^{j-1}\left(\dfrac{k}{j}\right)\frac{\varphi_{c,j-k}(x)}{\varphi_{c,0}(x)}b_k(x),\\
& \varphi_{c,0}(x) = u_c(0,x) \notag,\\
& \varphi_{c,j}(x) = \int_c^x [ u_c(0,t)v_c(0,x) - v_c(0,t)u_c(0,x)]\varphi_{c,j-1}(t)r(t) dt,\quad x\in(c,b),\ j\in\bbN. \label{phin}
\end{align}

If Hypothesis \ref{Hypo:Trace} $(ii)$ holds as well, then the choice $c=a$ is allowed wherein $\varphi_a(z,x)$ is assumed to be a principal solution at $x=a$ satisfying \eqref{PhiOne}.
\end{proposition}
\begin{proof}
The statement regarding the generating function follows by direct computation using the representation $\varphi_c(z,x) = \varphi_c(0,x) \prod_{j \in \bbN} \big(1- z/\lambda_j(c,x)\big)$, which holds whenever $c$ is chosen such that $\varphi(0,x) \not = 0$ for $x \in (c, b)$ (see the argument after Hyp.~\ref{Hypo:Trace} for details).

The second part of the proposition follows as in \cite[Thm. 2.8]{FPS25a} (see also \cite[Thm. 4.2]{FGKS21}, \cite[Thm. 3.7]{FPS25}) along with the fact that the series for $\varphi_c(z,x)$ can be written as \cite[Sect.~3]{PS24} $\varphi_c(z,x) = \sum_{j = 0}^\infty \varphi_{c,j}(x)z^j,$ $x\in(c,b),$ $z\in\bbC.$
\end{proof}
We present an alternative \emph{asymptotic} formula for $\zeta(1; (c,x))$ in Appendix \ref{appendix}.
\subsection{Notation}\label{Sect:notation}
\begin{itemize}
    \item $\sigma(T)$ and $\rho(T)$ will denote the spectrum and resolvent sets of an operator $T$, respectively. 
    \item $\kappa$, $\kappa_a$, $\kappa_b$ denote the exponents of convergence associated with the spectrum of self-adjoint realizations of $\tau$, $\tau|_{(a,c)}$, $\tau|_{(c,b)}$, respectively (none of which will depend on the particular self-adjoint realization or choice of $c\in(a,b)$\footnote{For a brief argument as to why $\kappa_a$, $\kappa_b$ do not depend on the choice of $c \in (a,b)$, see \cite[Lem.~6.3]{KST_IMRN}.}).
    \item Analogously, $\con$, $\cona$, $\conb$ denote the ranks associated with the spectrum of self-adjoint realizations of $\tau$, $\tau|_{(a,c)}$, $\tau|_{(c,b)}$, respectively.
    \item $f \sim g$ is shorthand for $\frac{f}{g} \to 1$ (in the appropriate limit); $f \propto g$ is shorthand for $\frac{f}{g} \to c \in \bbC \setminus \lbrace 0 \rbrace$.

    \item $\varphi_c(z,x)$, $\theta_c(z,x)$ will denote principal resp.~nonprincipal solutions of $\tau f = zf$ at the endpoint $c \in \lbrace a, b \rbrace$, that are entire in the spectral parameter $z$ (and depending on the context they might satisfy additional normalization conditions).
    
    \item In cases where we require a principal resp.~nonprincipal solution only for a \emph{fixed} $z \in \mathbb C$, we will use the notation $u_c(z,x)$, $v_c(z,x)$ (note that we can always choose $u_c(z,x) = \varphi_c(z,x)$ etc.).
\end{itemize}

\subsection{Outline of paper}
\begin{itemize}
    \item Section \ref{s3} contains a very brief background on singular Sturm--Liouville operators, focusing on what is necessary for our results.
    \item Section \ref{Sect:GeneralConstructions} concerns the construction of characteristic functions of minimal growth. We begin in Section \ref{Sect:Trace} with the simpler case of Sturm--Liouville expressions with either $\cona = 0$ or $\conb = 0$. For the general case, we construct nonprincipal entire solutions satisfying a certain normalization (cf.~\eqref{NormTheta}) in Section \ref{Sect::NonPrincipal} and then construct principal solutions of minimal order in Section \ref{Sect::Principal}. Finally, the construction of characteristic functions of minimal order is contained in Section \ref{Sect:Char}.
    \item Section \ref{Sect:Applications} considers two other applications of our results: connections to Liouville--Green approximation and convergence rates of eigenvalues of truncated problems.
    \item In Section \ref{Sect:Examples} our results are illustrated in the case of the harmonic oscillator and the Laguerre equation (see also Sections \ref{Sect:Airy} for Airy and \ref{Sect:PowerPotentials} for general power potentials).
    \item Appendix \ref{appendix} contains a lengthy proof of a useful integral formula for the first partial-$\zeta$ value. We also gather some observations on the principal and nonprincipal solutions which only require the weaker assumption of a purely discrete spectrum.
\end{itemize}
Multiple open problems are included throughout; see Open Problems \ref{Prob:Nonprincipal}, \ref{ProblemLG}, \ref{Prob:Conv}, and \ref{Prob:Ess}. 

%%%%%%%%%%%%%%%%%%%%%%%%%%%%%%%
%%%%%%%%%%%%%%%%%%%%%%%%%%%%%%%
\section{Background on Singular Sturm--Liouville operators} \lb{s3}
%%%%%%%%%%%%%%%%%%%%%%%%%%%%%%%
%%%%%%%%%%%%%%%%%%%%%%%%%%%%%%%

In this section we recall the standard assumptions of singular Sturm–Liouville theory (see e.g. \cite[Ch.~9]{Ze05}) that will be imposed throughout the paper.
%%%%%%%%%%%%%
\begin{hypothesis} \label{h1}
Let $(a,b) \subseteq \bbR$ and suppose that $p,q,r$ are $($Lebesgue\,$)$ measurable functions on $(a,b)$ 
such that $r,p>0$ a.e.~on $(a,b)$, $q$ is real-valued a.e.~on $(a,b)$, and $r,1/p,q\in\Ll$.
\end{hypothesis}
%%%%%%%%%%%%%
The functions introduced in Hypothesis \eqref{h1} are then used to construct the general three coefficient differential expression 
\begin{align}
    \tau=\f{1}{r(x)}\left[-\f{d}{dx}p(x)\f{d}{dx} + q(x)\right] \quad  \text{for a.e.} \ x\in(a,b) \subseteq \R.
\end{align}
A Sturm--Liouville operator is defined as the pair consisting of the differential expression $\tau$ and a suitable domain in $\Lr$ over which $\tau$ acts. The analysis of the self-adjoint extensions begin with the notion of maximal and minimal operators associated with $\tau$. The \textit{maximal operator}, $T_{max}$, and \textit{minimal operator}, $T_{min}$, in $L^2((a,b);rdx)$ associated with $\tau$ are defined, respectively, by
\begin{align}
&T_{max} f = \tau f,\ f \in \dom(T_{max})=\big\{g\in L^2((a,b);rdx) \, \big| \,g,g'\in\ACl;    
\tau g\in L^2((a,b);rdx)\big\}, \\
&T_{min} f = \tau f,\ f \in \dom(T_{min})=\big\{g\in\dom(T_{max})  \, \big| \, W(h,g)(a)=0=W(h,g)(b) \, 
\text{for all } h\in\dom(T_{max}) \big\},   \no 
\end{align}
where $W(f,g)(x) = f(x)g^{[1]}(x) - f^{[1]}(x)g(x)$ with $y^{[1]}(x)=p(x)y'(x)$ for $f,g\in\ACl$, $x \in (a,b)$,
is the Wronskian of the functions $f$ and $g$. 

According to the theory of singular Sturm--Liouville operators (see e.g. \cite[Sect. 4]{GLN20}, \cite[Ch.~13]{GNZ23}, \cite[Chs.~4, 6--8]{Ze05}, as well as \cite{CGN16}, \cite[Chs.~8, 9]{CL85}, \cite[Sects.~13.6, 13.9, 13.10]{DS88}, \cite[Ch.~XI]{Ha02}, \cite[Ch.~III]{JR76}, \cite[Ch.~V]{Na68}, \cite{NZ92}, \cite[Ch.~6]{Pe88}, \cite[Ch.~9]{Te14}, \cite[Sect.~8.3]{We80}, and \cite[Ch.~13]{We03} for a complete overview of the subject)
the self-adjoint extensions of $T_{min}$ depend on the limit point or limit circle classification of the endpoints $\{a,b\}$ of the interval. 
Since a complete analysis of the spectral $\z$-function associated with a Sturm--Liouville operator in the case in which both endpoints of the interval are limit circle (namely, the quasi-regular case) has been presented in \cite{FPS25}, we focus here on the case in which {\it at least one} endpoint is in the limit point case. In addition, we will develop our arguments under the assumption that the minimal operator $T_{min}$ (and hence all of its symmetric extensions) is bounded from below. This assumption is justified by the fact that in applications, especially in the ambit of quantum field theory, the Hamiltonian describing a physical system must be bounded from below to avoid the existence of states with arbitrarily negative energy.    
We collect the above assumptions in the following:
%%%%%%%%%%%%%%%%%%
\begin{hypothesis}\label{h2} 
The Sturm--Liouville differential expression $\tau$ satisfies Hypothesis \ref{h1}, at least one endpoint is in the limit point case, and $T_{min}$ is bounded from below, that is, there exists $\lambda_0\in\R$ such that $(u, [T_{min} - \lambda_0 I]u)_{L^2((a,b);rdx)}\geq 0$ for $u \in \dom(T_{min})$.
\end{hypothesis}
%%%%%%%%%%%%%%%%%%%%

In order to construct the self-adjoint extensions of a minimal Sturm--Liouville operator, one restricts the maximal domain by imposing suitable boundary conditions on its functions. Due to the singular nature of the endpoints of the interval, standard boundary conditions cannot, in general, be imposed on the functions of the maximal domain. For this reason, one introduces the notion of {\it generalized boundary values} which replaces the standard boundary conditions in the description of the self-adjoint extensions of a singular Sturm--Liouville operator. This is described next.

As the operator $T_{min}$ is assumed to be bounded from below, one can introduce the notion of principal and nonprincipal solutions of the equation $\tau u=\lambda u$ at the endpoints. The interested reader can find a detailed analysis of this topic in \cite[Sect. 4]{GLN20}, \cite[Ch.~13]{GNZ23}, and \cite[Chs.~4, 6--8]{Ze05}. Let $T_{min}$ be bounded from below, then for fixed $c \in (a,b)$, there exists a $\nu_0\in\bbR$ such that for all $\lambda<\nu_0$, $\tau u = \lambda u$ has $($real-valued\,$)$ nonvanishing solutions
$u_a(\lambda,\dott) \neq 0$,
$v_a(\lambda,\dott) \neq 0$ in $(a,c]$ satisfying
\begin{align}
&W(v_a (\lambda,\dott),u_a (\lambda,\dott)) = 1,
\quad u_a (\lambda,x)=\oh(v_a (\lambda,x))
\text{ as $x\downarrow a$,}  \\
&\int_a^c dx \, p(x)^{-1}u_a(\lambda,x)^{-2}=\infty, \quad \int_a^c dx \, p(x)^{-1}{v_a(\lambda,x)}^{-2}<\infty,
\end{align}
with analogous statements holding for $x=b$. See \cite{LM36} for proofs (see also \cite{Re43,Re51}, \cite[Appendix]{HW55}).
\begin{definition}\label{Def3.5}
Suppose that $T_{min}$ is bounded from below, and let 
$\lambda\in\bbR$. Then $u_a(\lambda,\dott)$ $($resp., $u_b(\lambda,\dott)$$)$ described above is called a {\it principal} $($or {\it minimal}\,$)$
solution of $\tau u=\lambda u$ at $a$ $($resp., $b$$)$. A real-valued solution 
$v_a(\lambda,\dott)$ $($resp., $v_b(\lambda,\dott)$$)$ of $\tau
u=\lambda u$ linearly independent of $u_a(\lambda,\dott)$ $($resp.,
$u_b(\lambda,\dott)$$)$ is called {\it nonprincipal} at $a$ $($resp., $b$$)$.
\end{definition}
As we will only consider problems for which self-adjoint realizations have a purely \emph{discrete} spectrum,  principal and nonprincipal solutions at both endpoints will even exist for \emph{all} parameters $\lambda \in \mathbb R$. In fact, we even allow for complex spectral parameters, as explained in the following remark.

\begin{remark}
For the sake of simplicity, we extend the notion of principal and nonprincipal solutions to the case of \emph{complex} spectral parameter $z \in \mathbb C$ via the requirement $|u_a(z,x)| = o(|v_a(z,x)|)$ as $x \downarrow a$. It will be a consequence of our construction that for problems with a purely discrete spectrum and finite exponent of convergence of eigenvalues, the prior characterization is consistent with $u_a(z,x) \in L^2((a,c); rdx)$ and $v_a(z,x) \not\in L^2((a,c); rdx)$, in case $a$ is a limit point endpoint.
\end{remark}

The introduction of principal and nonprincipal solutions allows one, in the case of $T_{min}$ bounded from below, to parameterize self-adjoint extensions via generalized boundary values at a limit circle endpoint, as detailed in the following two theorems:
\begin{theorem}[{\cite[Thm. 4.5]{GLN20}, \cite[Thm. 13.4.1]{GNZ23}}] \label{At3}
Assume that $\tau$ is in the limit circle case at $a$ and that $T_{min}$ is bounded from below. Denote by 
$u_a(\lambda_0, \dott)$ and $v_a(\lambda_0, \dott)$  principal and nonprincipal solutions of $\tau u = \lambda_0 u$ at $a$, normalized so that $W(v_a(\lambda_0,\dott), u_a(\lambda_0,\dott)) = 1.$
Then, for all $g \in \dom(T_{max})$,
\begin{align}
\wti g(a) & =  W(g,u_a(\lambda_0,\dott))(a)= \lim_{x \downarrow a} \f{g(x)}{v_a(\lambda_0,x)},\quad {\wti g}^{\, \prime}(a) = W(v_a(\lambda_0,\dott), g)(a) = \lim_{x \downarrow a} \f{g(x) - \wti g(a) v_a(\lambda_0,x)}{u_a(\lambda_0,x)}.  
\label{A6} 
\end{align}
In particular, the limits in \eqref{A6} exist. Similar results hold when the endpoint $b$ is in the limit circle case with the simple replacement $a\leftrightarrow b$. 
\end{theorem}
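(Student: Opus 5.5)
The plan is to reduce both limits in \eqref{A6} to the convergence of Wronskians, using the Lagrange identity and the fact that at a limit circle endpoint all solutions lie in $L^2$ near $a$. Fix $g\in\dom(T_{max})$, write $u=u_a(\lambda_0,\dott)$ and $v=v_a(\lambda_0,\dott)$, and set $h=\tau g-\lambda_0 g\in L^2((a,c);rdx)$.

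\textbf{Step 1: the Wronskians converge.} Since $\tau u=\lambda_0 u$, the Lagrange identity gives, for $a<x<c$,
\begin{align*}
W(g,u)(c)-W(g,u)(x)=\pm\int_x^c u(t)h(t)r(t)\,dt,
\end{align*}
and the same holds with $v$ in place of $u$ (the sign depends only on the Wronskian convention). At a limit circle endpoint $u,v\in L^2((a,c);rdx)$, so by Cauchy--Schwarz both integrals converge absolutely as $x\downarrow a$. Hence the limits $W(g,u)(a)$ and $W(v,g)(a)$ exist. Moreover the remainders satisfy
\begin{align*}
|W(g,u)(x)-W(g,u)(a)|\le \int_a^x |u h| r\,dt,
\end{align*}
and similarly for $v$.

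\textbf{Step 2: the quotients.} We use the pointwise identity $f=W(f,u)\,v+W(v,f)\,u$, valid for any $f$ with $f,f^{[1]}\in\ACl$ because $W(v,u)=1$; it follows by solving the $2\times 2$ system for $(f,f^{[1]})$. With $f=g$ and after dividing by $v$,
\begin{align*}
\frac{g}{v}=W(g,u)(x)+W(v,g)(x)\,\frac{u}{v}.
\end{align*}
The first term tends to $W(g,u)(a)$. The second tends to $0$, because $W(v,g)(x)$ stays bounded by Step 1 and $u/v\to0$ since $u$ is principal. This gives the first formula in \eqref{A6}. For the second, subtract $\wti g(a)v$ and divide by $u$:
\begin{align*}
\frac{g-\wti g(a)v}{u}=W(v,g)(x)+\big[W(g,u)(x)-W(g,u)(a)\big]\frac{v(x)}{u(x)}.
\end{align*}
The first term tends to $W(v,g)(a)$. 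It remains to show that the second term vanishes.

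\textbf{Step 3: the main obstacle.} Here $v/u\to\infty$, so we need the sharper estimate
\begin{align*}
\frac{v(x)}{u(x)}\int_a^x |u h| r\,dt\to 0 .
\end{align*}
Since $\int_a^c p^{-1}v^{-2}<\infty$ and $W(v,u)=1$, we may use the standard representation $u(t)=v(t)\phi(t)$ with $\phi(t)=\int_a^t p^{-1}v^{-2}\,ds$. This is possible because $v\phi$ is a solution with $W(v,v\phi)=1$ that is $o(v)$, and the principal solution is unique up to scaling. Fixing this normalization only changes $\wti g^{\,\prime}(a)$ by a harmless constant multiple and adds a multiple of $\wti g(a)$, which is irrelevant for existence of the limit and is consistent with $W(v,u)=1$. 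Since $\phi$ is increasing,
\begin{align*}
\int_a^x |u h| r\,dt=\int_a^x |v|\,\phi\,|h|\,r\,dt\le \phi(x)\int_a^x |v h| r\,dt=\frac{u(x)}{v(x)}\,o(1),
\end{align*}
where the $o(1)$ comes from $v,h\in L^2$ near $a$. Multiplying by $v/u$ gives the required convergence to $0$. This proves the second formula in \eqref{A6} and shows that all limits exist.

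The case of the endpoint $b$ follows by the same argument with $a\leftrightarrow b$.
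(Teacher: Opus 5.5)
Your proof is correct. The paper gives no argument of its own here: the statement is quoted from \cite[Thm.~4.5]{GLN20} and \cite[Thm.~13.4.1]{GNZ23} and serves only as background for defining the generalized boundary values $\wti g(a)$ and $\wti g^{\,\prime}(a)$, so there is no in-paper route to compare yours against.

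What you wrote is a complete, self-contained proof along standard lines. The identity $g=W(g,u)\,v+W(v,g)\,u$ combined with the Lagrange identity is the variation-of-constants representation of $g$; with the paper's convention, $W(g,u)'=r\,u\,h$ and $W(v,g)'=-r\,v\,h$, so you can drop the $\pm$. The one delicate estimate is $\frac{v(x)}{u(x)}\int_a^x|uh|\,r\,dt\to0$. You handle it correctly with the reduction-of-order formula $u=v\phi$ and the monotonicity of $\phi$.

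One sentence should be deleted: the remark that fixing the normalization ``only changes $\wti g^{\,\prime}(a)$ by a harmless constant multiple and adds a multiple of $\wti g(a)$.'' It is unnecessary and, as phrased, inaccurate. You cannot rescale $u$ once $v$ and the condition $W(v,u)=1$ are fixed, and no renormalization takes place. On an interval $(a,a+\delta)$ where $v$ has no zeros, $v\phi$ is a principal solution. Principal solutions are unique up to a scalar, and $W(v,u)=1=W(v,v\phi)$ forces that scalar to be $1$. Hence $u=v\phi$ holds exactly there, which is all Step~3 needs.

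If you want to be fully explicit, also justify that $\int_a p^{-1}v^{-2}\,dt<\infty$ for every solution $v$ linearly independent of $u$, not only for one particular nonprincipal solution $v_0$. This follows because $v=\alpha u+\beta v_0$ with $\beta\neq0$, and $u=o(v_0)$ gives $|v|\geq|\beta|\,|v_0|/2$ near $a$.
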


\begin{theorem}[{\cite[Thm. 5.5.8 \& Rem. 5.5.9]{GNZ23}, \cite[Thm. 3.11 \& Rem. 3.12]{GLN20}}]\label{extensions}
Let $\tau$ satisfy Hypothesis \ref{h2}. 
Then the following hold$:$\\[1mm]
$(i)$ If $\tau$ is in the limit point case at both endpoints, then $T_{min}=T_{max}$ is essentially self-adjoint. We denote this unique self-adjoint extension by $T_0$. \\[1mm]
$(ii)$  If $x=a$ is in the limit circle case and $x=b$ is in the limit point case, then all self-adjoint extensions $T_{\al}$ of $T_{min}$ are of the form
\begin{align}
& T_{\al} f = \tau f, \quad \al\in[0,\pi),\quad f \in \dom(T_{\al})=\big\{g\in\dom(T_{max}) \, \big| \, \cos(\al)\wti g(a)+ \sin(\al){\wti g}^{\, \prime}(a)=0\big\}.\label{A1}   
\end{align}
$(iii)$  If $x=b$ is in the limit circle case and $x=a$ is in the limit point case, then all self-adjoint extensions $T_{\b}$ of $T_{min}$ are of the form
\begin{align}
& T_{\b} f = \tau f, \quad \b\in[0,\pi),   \quad f \in \dom(T_{\b})=\big\{g\in\dom(T_{max}) \, \big| \, \cos(\b)\wti g(b)- \sin(\b){\wti g}^{\, \prime}(b)=0\big\}.\label{B1} 
\end{align}

\noindent We will denote by $T_A$ the self-adjoint extensions described in $(i)$--\,$(iii)$ and recall that the Friedrichs extension coincides with $T_0$ in all cases.
\end{theorem}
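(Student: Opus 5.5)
The plan is to derive the classification from von Neumann's theory of deficiency indices combined with Weyl's limit point/limit circle alternative. Then I would translate the abstract boundary conditions into the generalized boundary values of Theorem \ref{At3} using Green's formula.

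\emph{Step 1 (deficiency indices).} Since $T_{min}$ is real and bounded from below, its deficiency indices are equal, $(d,d)$. By Weyl's alternative, $d$ is the number of limit circle endpoints. Hence in case $(i)$ we get $d=0$, so $T_{min}$ (closed, as the adjoint of $T_{max}$) is self-adjoint, $T_{min}=T_{max}$, and it is the unique extension $T_0$. In cases $(ii)$ and $(iii)$ we get $d=1$. The self-adjoint extensions are then exactly the restrictions of $T_{max}$ to domains $\dom(T_{min})\dotplus\mathrm{span}\{h\}$, where $h$ ranges over the elements of $\dom(T_{max})/\dom(T_{min})$ that are symmetric for the boundary form.

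\emph{Step 2 (boundary form).} For $f,g\in\dom(T_{max})$, Green's formula gives
\[
(T_{max}f,g)-(f,T_{max}g)=W(\overline{f},g)(b)-W(\overline{f},g)(a),
\]
where both limits exist. At a limit point endpoint the Wronskian term vanishes for all maximal-domain functions, so only the limit circle endpoint contributes. Take $a$ limit circle. Using the Plücker identity with $u_a(\lambda_0,\cdot)$, $v_a(\lambda_0,\cdot)$ normalized by $W(v_a,u_a)=1$, I expand
\[
W(\overline f,g)(a)=\overline{\wti f(a)}\,\wti g^{\,\prime}(a)-\overline{\wti f^{\,\prime}(a)}\,\wti g(a).
\]
Here the generalized boundary values are the limits furnished by Theorem \ref{At3}; they are real-linear in $f$ and conjugation-compatible because $u_a$, $v_a$ are real.

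\emph{Step 3 (surjectivity and Lagrangian subspaces).} Next I show that the boundary map $g\mapsto(\wti g(a),\wti g^{\,\prime}(a))$ is onto $\bbC^2$ with kernel $\dom(T_{min})$. For surjectivity, multiply $u_a$ and $v_a$ by a smooth cutoff equal to $1$ near $a$ and $0$ near $b$; since $a$ is limit circle, $u_a,v_a\in L^2$ near $a$, so these lie in $\dom(T_{max})$ and have boundary values $(0,1)$ and $(1,0)$. For the kernel, use $\dom(T_{min})=\{g: W(h,g)(a)=0\ \forall h\}$. Self-adjoint extensions then correspond to one-dimensional subspaces of $\bbC^2$ on which the form $\overline{x_1}y_2-\overline{x_2}y_1$ vanishes, i.e. complex lines through a vector $(\sin\alpha,-\cos\alpha)$ with real $\alpha\in[0,\pi)$ (up to phase). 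This is precisely \eqref{A1}; the same argument with the sign convention at $b$ gives \eqref{B1}.

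\emph{Step 4 (Friedrichs extension).} This is the main obstacle. I must show that the Friedrichs extension corresponds to $\alpha=0$, i.e. $\wti g(a)=0$: the domain consists of functions behaving like the principal solution at $a$. I would try the Kalf--Rellich/Niessen--Zettl characterization first. The Friedrichs domain consists of those $g$ in the form domain, i.e. the closure of $\dom(T_{min})$ in the norm
\[
\int p|g'|^2+(q-\lambda_0 r)|g|^2 .
\]
Writing $g=u_a\,(g/u_a)$ and using $\int_a p^{-1}u_a^{-2}=\infty$ versus $\int_a p^{-1}v_a^{-2}<\infty$, one shows that $v_a$ (cut off) has infinite energy relative to the principal solution. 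Hence the form domain excludes a nonzero $\wti g(a)$ component, forcing $\wti g(a)=0$. In case $(i)$ the identification is trivial by uniqueness.
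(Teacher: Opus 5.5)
The paper does not prove this theorem; it quotes it from \cite{GNZ23,GLN20}. Your Steps 1--3 reproduce the standard argument behind those sources, and the Pl\"ucker expansion of $W(\overline f,g)(a)$ is correct, as is the reduction to real lines $\cos(\al)\wti g(a)+\sin(\al)\wti g^{\,\prime}(a)=0$. Two points need repair, and the second is a genuine gap.

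\emph{Step 3.} Under Hypothesis \ref{h1}, $p$ is only measurable with $1/p\in L^1_{loc}$. For a smooth cut-off $\chi$ the quasi-derivative $(\chi u_a)^{[1]}=p\chi'u_a+\chi u_a^{[1]}$ therefore need not even be continuous, so $\chi u_a\notin\dom(T_{max})$ in general.

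You can fix this with Naimark's patching lemma, or avoid the construction altogether. Set $\Gamma g=(\wti g(a),\wti g^{\,\prime}(a))$, $R=\Gamma(\dom(T_{max}))$, and $R^{\perp_\omega}=\{y\in\bbC^2:\overline{x_1}y_2-\overline{x_2}y_1=0 \text{ for all } x\in R\}$. Your own description of the minimal domain then gives $\dom(T_{min})=\Gamma^{-1}(R^{\perp_\omega})$, hence $\dom(T_{max})/\dom(T_{min})\cong R/(R\cap R^{\perp_\omega})$. Von Neumann's formula says this quotient has dimension $2$, which forces $R=\bbC^2$ and $\dom(T_{min})=\ker\Gamma$.

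\emph{Step 4.} Here the decisive point is asserted rather than proved. The half you give is fine, and it holds for every $g\in\dom(T_{max})$ without cutting off $v_a$: since $(g/u_a)'=W(u_a,g)/(pu_a^2)$ and $W(u_a,g)(x)\to-\wti g(a)$, you get $\int_a^c pu_a^2|(g/u_a)'|^2\,dx=\infty$ whenever $\wti g(a)\neq0$.

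What is missing is the converse: every element of the form domain $\mathcal Q$ must have \emph{finite} Jacobi energy near $a$. Here $\mathcal Q$ is the closure of $\dom(T_{min})$ in the norm $((T_{min}-\lambda_0+1)g,g)^{1/2}$; your integral expression is only formal for $q\in L^1_{loc}$. Without this converse, your ``hence'' is exactly the Kalf, Rosenberger and Niessen--Zettl characterization of the Friedrichs domain on which the cited works rely. To supply it you need four things.
\begin{enumerate}
\item The Picone identity
\[
\int_x^c\overline{g}\,(\tau-\lambda_0)g\,r\,dt=\int_x^c p\,u_a^2\,|(g/u_a)'|^2\,dt-\Big[\frac{\overline{g}}{u_a}\,W(u_a,g)\Big]_x^c .
\]
\item The vanishing of the boundary term as $x\downarrow a$ for $g\in\dom(T_{min})$.
\item A localization showing that the form norm dominates $\int_a^c pu_a^2|(g/u_a)'|^2\,dt$ on $\dom(T_{min})$. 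This is not automatic, because the term at $c$ involves $g^{[1]}(c)$, which the form norm does not control.
\item Lower semicontinuity, to pass from $\dom(T_{min})$ to $\mathcal Q$.
\end{enumerate}

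For item 2, write $g=W(v_a,g)u_a+W(g,u_a)v_a$ with $W(g,u_a)(x)=\int_a^x u_a(\tau-\lambda_0)g\,r\,dt$. Then use Cauchy--Schwarz together with the monotonicity of $v_a/u_a$ (taking $u_a,v_a>0$ on $(a,c]$). This gives $|W(g,u_a)(x)|^2\,v_a(x)/u_a(x)\leq\int_a^x|(\tau-\lambda_0)g|^2r\,dt\cdot\int_a^x u_av_a\,r\,dt\to0$, because $a$ is limit circle.
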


\section{Construction of principal/nonprincipal solutions and characteristic functions}\label{Sect:GeneralConstructions}

Before beginning our analysis, we once again refer the reader to Section \ref{Sect:notation} regarding specific notation used throughout this section. 

%%%%%%%%%%%%%%%%%%%%%%%%%%%%%%%
%%%%%%%%%%%%%%%%%%%%%%%%%%%%%%%
\subsection{Trace class resolvent case at one endpoint}\label{Sect:Trace}

As a primer, we start with the following simplifying assumptions, of which $(ii)$ can be found in \cite[Hyp.~2.3]{PS24}:
\begin{hypothesis}\label{Hypo:Trace}
    $(i)$ Hypothesis \ref{h3a}  holds and $x=b$ is in the limit point case.\\[1mm]
    $(ii)$ The Sturm--Liouville differential expression $\tau|_{(a,c)}$ for $c \in (a,b)$ has self-adjoint realizations with trace class resolvents, that is, $\cona=0$.
\end{hypothesis}

Conditions equivalent to the resolvents being trace class can be found in \cite[Thm. 4.2]{PS24}. 

According to \cite[Cor.~3.4]{PS24}, Hypothesis \ref{Hypo:Trace} implies that we can choose an entire generalized eigensolution  $\varphi_a(z,x)$ of $\tau f = z f$, which is principal at $x = a$ and satisfies the normalization condition
\begin{align}\label{PhiOne}
    \lim_{x \downarrow a} \frac{\varphi_a(z_1, x)}{\varphi_a(z_2, x)} = 1, \quad \text{for all} \ z_1, z_2 \in \bbC
\end{align}
(in fact the reverse implication also holds, see \cite[Thm.~4.2]{PS24}). As $\varphi_a(z,x)$ is entire in $z$ for each fixed $x$, it can be written as
\begin{align}\label{PhiFormula}
    \varphi_a(z,x) = \varphi_a(0,x)  e^{g(z)}\prod_{j \in \N} \Big(1- \frac{z}{\lambda_j(x)}\Big), \quad x \in (a,b),
\end{align}
where $\lambda_j(x)$ ($= \lambda_j(a,x)$) are the Friedrichs eigenvalues corresponding to the Sturm--Liouville differential expression $\tau|_{(a,x)}$ and $g(z)$ is some entire function with $g(0) = 0$. Note that the infinite product converges due to Hypothesis \ref{Hypo:Trace} $(ii)$. Now as $\lambda_j(x) \to \infty$ as $x \downarrow a$, it follows that $\lim_{x\downarrow a} \frac{\varphi_a(z,x)}{\varphi_a(0,x)} = e^{g(z)}$. Hence from \eqref{PhiOne} we can deduce that $g \equiv 0$.

Next, we can w.l.o.g.~assume that $0$ is not an eigenvalue of the Friedrichs realization  $T_0$ of $\tau$ on $(a,b)$ (otherwise replace $0$ by some $z_1\in\rho(T_0)$ in all formulas that follow). As both endpoints are nonoscillatory, for $x$ close enough to $b$, $\varphi_a(0,t)$ will not vanish for $t \in (x,b)$, that is, $\lambda_j(t) \not = 0$ for all $j \in \bbN$. We can thus write
\begin{align}\label{ratioChi}
    \frac{\varphi_a(z,x)}{\varphi_a(0, x)}\exp{\Big\lbrace\sum_{\ell=1}^{\conb} \sum_{j \in \N} \frac{1}{\ell}\Big(\frac{z}{\lambda_j(x)}\Big)^\ell\Big\rbrace} = \prod_{j \in \N} \underbrace{\Bigg[\Big(1- \frac{z}{\lambda_j(x)}\Big) \exp{\Big\lbrace\sum_{\ell=1}^{\conb} \frac{1}{\ell}\Big(\frac{z}{\lambda_j(x)}\Big)^\ell\Big\rbrace}\Bigg]}_{\quad \qquad E\big(\frac{z}{\lambda_j(x)}, \conb \big), \ \text{see } \eqref{c1}}.
\end{align}
The exponential on the left, which can be written as $\exp\big\lbrace\sum_{\ell=1}^{\conb} \frac{z^\ell}{\ell} \zeta(\ell;x) \big\rbrace$ with $\zeta(\ell;x) = \sum_{j \in \N} \frac{1}{\lambda_j(x)^\ell}$, is meant to regularize the product on the right. More precisely, since the sequence $\lbrace \lambda_j \rbrace_{j \in \N}$ with $\lambda_j = \lambda_j(b)$ has rank $\conb$, we can show that both sides must converge in the limit $x \uparrow b$. To see this, note that $\frac{d}{dx} \lambda_j(x) < 0$ \cite{Kong96}, and thus $\lambda_j(x) \downarrow \lambda_j$ as $x \uparrow b$. Hence, for $x$ close enough to $b$, we have that
\begin{align}
    \Big(1- \frac{z}{\lambda_j(x)}\Big) \exp{\Big\lbrace\sum_{\ell=1}^{\conb} \frac{1}{\ell}\Big(\frac{z}{\lambda_j(x)}\Big)^\ell\Big\rbrace} 
    &= 1 + O\Big(\frac{1}{\lambda_j(x)^{\conb+1}}\Big), \quad \text{locally uniformly for} \ z \in \C.
    \end{align}
As $\lambda_j^{-\conb-1}$ is summable by assumption, it follows that both sides of \eqref{ratioChi} converge for $x \uparrow b$ locally uniformly to an entire function $H(z) = \prod_{j \in \N} \big[\big(1- \frac{z}{\lambda_j}\big) \exp{\big\lbrace\sum_{\ell=1}^{\conb} \frac{1}{\ell}\big(\frac{z}{\lambda_j}\big)^\ell\big\rbrace}\Big]$. Note that this is the Hadamard representation for the characteristic function of the sequence $\lbrace \lambda_j \rbrace_{j \in \N}$, and is thus suitable for the integral representation in Theorem \ref{Thrm3.4} as summarized in the following:

\begin{proposition}\label{Prop:trace}
    Assume Hypothesis \ref{Hypo:Trace} and that $0\in\rho(T_0)$ where $T_0$ denotes the Friedrichs realization of $\tau$. Let $\varphi_a(z,x)$ be an entire solution of $\tau f = zf$ which is principal at $x = a$ and satisfies the normalization \eqref{PhiLimit}. Then the Hadamard characteristic function\footnote{We refer to \cite[Thm. 2.6, Rem. 2.7 $(ii)$, Cor. 2.10, Rem. 2.11 $(ii)$]{FPS25a} for the importance of the Hadamard form of the characteristic function in the context of spectral $\zeta$-functions.} for $\sigma(T_0)$ is given via the formula
    \begin{align}\label{HFormula}
        H_0(z) = \lim_{x\uparrow b} \frac{\varphi_a(z,x)}{\varphi_a(0,x)} \exp\Big\lbrace\sum_{\ell=1}^{\conb} \frac{z^\ell}{\ell} \zeta(\ell;x) \Big\rbrace,\quad z\in\bbC. 
    \end{align}
    Note that $\varphi_a(0,x)$ in the denominator can be replaced with any other (nonprincipal at $x = b$) solution $v_b(0,x)$, in which case $H_0(z)$ would change only by a multiplicative factor.
    
    A similar statement can be made in case Hypothesis \ref{Hypo:Trace} holds at the endpoint $b$ instead, with the roles of $a$ and $b$ in \eqref{HFormula} reversed.
\end{proposition}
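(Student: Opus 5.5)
The plan is to make rigorous the heuristic computation given just before the statement. Fix $x$ near $b$ such that $\varphi_a(0,t)\neq 0$ for $t\in[x,b)$; this is possible since $b$ is nonoscillatory and $0\in\rho(T_0)$. Because of the normalization \eqref{PhiLimit}, the argument following \eqref{PhiFormula} yields $g\equiv 0$, so
\begin{align*}
\varphi_a(z,x)=\varphi_a(0,x)\prod_{j\in\bbN}\Big(1-\frac{z}{\lambda_j(x)}\Big).
\end{align*}
Here $\lambda_j(x)=\lambda_j(a,x)$ are the Friedrichs eigenvalues on $(a,x)$, all nonzero for such $x$. The product converges absolutely, since $\sum_j|\lambda_j(x)|^{-1}<\infty$: the truncated problem has $\cona=0$ at $a$ and is regular at $x$. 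Each $\zeta(\ell;x)$, $\ell\geq 1$, is therefore finite, so we may multiply factorwise by $\exp\{\frac1\ell(z/\lambda_j(x))^\ell\}$. This gives identity \eqref{ratioChi}, with the left-hand side equal to the expression inside the limit in \eqref{HFormula}.

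Next we pass to the limit $x\uparrow b$ on the right-hand side of \eqref{ratioChi}. By monotonicity, $\lambda_j(x)\downarrow\lambda_j=\lambda_j(a,b)$ as $x\uparrow b$ \cite{Kong96}, and $\lambda_j(x)\to\lambda_j$ for each fixed $j$. This is where I expect the main obstacle: showing that $\lambda_j(x)\to\lambda_j$ holds for each $j$ (eigenvalue continuity as the truncation endpoint tends to a limit-point endpoint, cf.~\cite[Ch.~10.8]{Ze05}) and obtaining the domination needed to interchange limit and product.

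For the domination, note that $\lambda_j(x)\geq\lambda_j$ and that only finitely many $\lambda_j$ are $\leq 0$, since $T_0$ is bounded below. Hence for $j\geq j_0$ we have $|\lambda_j(x)|^{-1}\leq|\lambda_j|^{-1}$ uniformly in $x$. Then, for $|z|\leq R$ and $j$ large, the standard Weierstrass factor estimate $|E(w,\conb)-1|\leq C|w|^{\conb+1}$ for $|w|\leq 1/2$ gives
\begin{align*}
\Big|E\Big(\frac{z}{\lambda_j(x)},\conb\Big)-1\Big|\leq C\,\frac{R^{\conb+1}}{|\lambda_j|^{\conb+1}}.
\end{align*}
This bound is summable and independent of $x$. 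Dominated convergence for $\sum_j\log E(\cdot)$, applied on a tail $j\geq j_1$, together with the pointwise convergence of the finitely many remaining factors, shows that the right-hand side converges locally uniformly to $\prod_j E(z/\lambda_j,\conb)$. That product is exactly the Hadamard form \eqref{c1} for $\sigma(T_0)$ with $m_0=0$. Hence the limit in \eqref{HFormula} exists and equals $H_0(z)$.

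For the replacement claim, any $v_b(0,\cdot)$ nonprincipal at $b$ satisfies $\varphi_a(0,x)/v_b(0,x)\to c$ as $x\uparrow b$. Here $c\neq0$ and is finite: $\varphi_a(0,\cdot)$ is not principal at $b$ because $0\notin\sigma(T_0)$, so it is nonprincipal at $b$, and the ratio of two nonprincipal solutions has a nonzero finite limit, the principal component being $o$ of the nonprincipal one. Thus the limit changes by the constant factor $c$. The case where Hypothesis \ref{Hypo:Trace} holds at $b$ follows by the symmetric argument with $x\downarrow a$ and the eigenvalues $\lambda_j(x,b)$ increasing to $\lambda_j$.
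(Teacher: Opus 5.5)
Your proposal is correct and follows the paper's own argument: you use the product representation \eqref{PhiFormula} with $g\equiv 0$, regularize with Weierstrass factors as in \eqref{ratioChi}, and pass to the limit using $\lambda_j(x)\downarrow\lambda_j$ together with summability of $\lambda_j^{-\conb-1}$; your $x$-independent domination via $\lambda_j(x)\geq\lambda_j$ just makes explicit what the paper leaves implicit. One small caveat, which the paper shares: deducing $g\equiv 0$ really uses \eqref{PhiOne}, which is what \eqref{PhiLimit} reduces to when $\cona=0$ and $\kappa_a<1$, whereas in the edge case $\cona=0$, $\kappa_a=1$, \eqref{PhiLimit} only gives $g(z)=Bz$ and the limit is then $e^{Bz}H_0(z)$ rather than $H_0(z)$.
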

This result demonstrates that from the point of view of characteristic functions the normalization \eqref{PhiOne} studied in \cite{PS24} is very natural. It is also apparent that the role of the \emph{partial} $\zeta$-values $\zeta(\ell; x)$, in particular their divergence as $x \uparrow b$, is crucial. 

\begin{remark}\label{Rem:3.3}
When $\tau$ is in the limit circle case at $x = a$, we can consider instead the self-adjoint realization $T_\alpha$ defined in Theorem \ref{extensions} $(ii)$. The Hadamard characteristic function $H_\alpha$ is then given by
\begin{align}\label{Eq:gentracechar}
    H_\alpha(z) = \lim_{x\uparrow b} \frac{\psi_a(z,x)}{\psi_a(0,x)} \exp\Big\lbrace\sum_{\ell=1}^{\conb} \frac{z^\ell}{\ell} \zeta_\alpha(\ell;x) \Big\rbrace, \quad z\in\bbC,
\end{align}
with $\psi_a(z,x) = \cos(\alpha) \varphi_a(z,x) - \sin(\alpha) \theta_a(z,x)$ and $\zeta_\alpha(\ell; x) = \sum_{j \in \bbN} \lambda_{\alpha, j}(x)$. Here $\lbrace \lambda_{\alpha, j}(x) \rbrace_{j \in \bbN}$ is the spectrum of the self-adjoint realization of $\tau|_{(a,x)}$ with $\alpha$-boundary condition at $a$ and Dirichlet boundary condition at $x$.
\end{remark}

\subsubsection{The Airy equation}\label{Sect:Airy}
To illustrate the above results, we now consider the Airy equation $-y''(x)+ x y(x)=z y(x)$ for $x\in(0,\infty)$. Following \cite[Sect. 5.4]{FGKS21}, we can write the solution satisfying $\alpha$-boundary conditions at $x=0$ as $\psi_0(z,x) = \cos(\alpha) \varphi_0(z,x) - \sin(\alpha) \theta_0(z,x)$ with 
\begin{align}
\begin{split}\label{5.76}
\varphi_0(z,x)&=\pi [\Ai(-z)\Bi(x-z)-\Bi(-z)\Ai(x-z)],\\ 
\theta_0(z,x)&=-\pi [\Ai'(-z)\Bi(x-z)-\Bi'(-z)\Ai(x-z)],
\quad z\in\C,
\end{split}
\end{align}
where $\Ai(\dott)$ and $\Bi(\dott)$ represent the Airy functions of the first and second kind, respectively (cf. \cite[Sect. 9]{DLMF}). Note that $\varphi_0$ satisfies \eqref{PhiOne} and the solutions $\theta_0,\varphi_0$ have unit Wronskian.

As the resolvents for this problem are Hilbert--Schmidt but not trace class (the eigenvalue growth is proportional to $n^{2/3}$ so that $\kappa_\infty=3/2$), we conclude that only the first partial $\zeta$-value in \eqref{Eq:gentracechar} needs to be computed.
Furthermore, from the $x\to \infty$ asymptotic behavior of the Airy functions \cite[Sect. 9.7]{DLMF}, it is apparent that only the terms with $\Bi(x-z)$ will contribute to the limit in \eqref{Eq:gentracechar}, that is,
\begin{equation}
H_\alpha(z)=\frac{\cos(\alpha)\Ai(-z)+\sin(\alpha)\Ai'(-z)}{\cos(\alpha)\Ai(0)+\sin(\alpha)\Ai'(0)}\lim_{x\to\infty}\frac{\Bi(x-z)}{\Bi(x)}e^{z\zeta_\alpha(1;x)},\quad z\in\bbC,
\end{equation}
where we are assuming that the denominator is nonzero, that is, $\alpha\neq \tan^{-1}\big(-\Ai(0)/\Ai'(0)\big).$ Otherwise, the denominator can simply be removed throughout the following.

By Proposition \ref{Prop:ExactZeta}, the partial $\zeta$-value can be written explicitly as
\begin{align}
\zeta_\alpha(1;x)&=-\frac{\frac{d}{dz}\psi_0(z,x)\big|_{z=0}}{\psi_0(0,x)} \underset{x\to\infty}{=}x^{1/2}+\frac{\cos(\alpha)\Ai'(0)+\sin(\alpha)\Ai''(0)}{\cos(\alpha)\Ai(0)+\sin(\alpha)\Ai'(0)}+o(1),
\end{align}
where the $x^{1/2}$ term comes from the ratio $\Bi'(x)/\Bi(x)$. From this and \cite[Eq. 9.7.8]{DLMF} the Hadamard characteristic function \eqref{Eq:gentracechar} can be written as
\begin{equation}\label{Eq:AiryChar}
H_\alpha(z)=\frac{\cos(\alpha)\Ai(-z)+\sin(\alpha)\Ai'(-z)}{\cos(\alpha)\Ai(0)+\sin(\alpha)\Ai'(0)}\exp\Big\{z\frac{\cos(\alpha)\Ai'(0)+\sin(\alpha)\Ai''(0)}{\cos(\alpha)\Ai(0)+\sin(\alpha)\Ai'(0)}\Big\},\quad \a\in[0,\pi),\ z\in\bbC.
\end{equation}
This is an entire function of order $3/2$ \cite[Sect. 9.7]{DLMF}, and hence has minimal growth as expected.
The case $\a=0$ (i.e., Dirichlet at $x=0$) corresponds to the Airy $\zeta$-function recently studied in \cite{FPS25a} (see also \cite{Cr96,voros,voros23}).
Notice that the denominator and exponential term in the formula for $H_\alpha$ in \eqref{Eq:AiryChar} are there to guarantee that this yields the Hadamard characteristic function, so can safely be discarded if one only wants a characteristic function of minimal order.

A problem with two limit point endpoints can be constructed from the Airy problem by considering the potential $|x|$ on $\bbR$.
A straightforward way of obtaining a characteristic function for this problem relies on the fact that its spectrum is simply the union of the spectrum of the half-line problem with Dirichlet boundary conditions and the one with Neumann boundary conditions at $x=0$. This is due to the symmetry of the potential about the point $x=0$ (see e.g. the proof of Thm.~4.3 in \cite{FS25a}). Therefore, a characteristic function for the problem with potential $|x|$ on the real-line can be written as
\begin{equation}
H(z)=H_0(z)H_{\pi/2}(z)=\frac{\Ai(-z)\Ai'(-z)}{\Ai(0)\Ai'(0)}\exp\Big\lbrace z\Big(\frac{\Ai'(0)}{\Ai(0)}+\frac{\Ai''(0)}{\Ai'(0)}\Big)\Big\rbrace,\quad z\in\bbC.
\end{equation}
We will alternatively be able to construct this characteristic function without the use of the symmetry properties by utilizing the results shown the next few sections.

\subsection{Construction of nonprincipal entire solutions}\label{Sect::NonPrincipal}
The difficulty in obtaining an expression for a characteristic function of minimal order increases significantly if we drop Hypothesis \ref{Hypo:Trace} $(ii)$. For one, the partial $\zeta$-values $\zeta(\ell; x)$ no longer converge; we will replace them in the following with $\zeta(\ell; (c,x)) = \sum_{j \in \bbN} \lambda_j(c,x)^{-\ell}$, where $\lambda_j(c,x)$ are the Dirichlet eigenvalues for $\tau|_{(c,x)}$, $c \in (a,b)$. Secondly, the representation \eqref{PhiFormula} no longer converges. 

Our strategy will be to construct an entire principal solution $\varphi_a(z,x)$ at $x=a$ of minimal order $\kappa_a$  and then show that
\begin{align}\label{FFormula}
    F_0(z) = \lim_{x \uparrow b} \frac{\varphi_a(z,x)}{v_b(0,x)} \exp\Big\lbrace\sum_{\ell=1}^{\conb} \frac{z^\ell}{\ell} \zeta(\ell;(c,x)) \Big\rbrace,
\end{align}
defines a characteristic function of minimal order $\kappa$, where $v_b(0,x)$ is any nonprincipal solution at $x = b$ (though $F_0$ will, in general, no longer be the Hadamard characteristic function). As a first step, we will prove that the regularization via the exponential $\exp\Big\lbrace\sum_{\ell=1}^{\conb} \frac{z^\ell}{\ell} \zeta(\ell;(c,x)) \Big\rbrace$ leads to a finite limit in \eqref{FFormula}, irrespective of the choice of $c \in (a,b)$. To this end, we need to study the behavior of nonprincipal solutions near $x = b$ for different eigenparameters.
\\
\\
Let $\varphi_c(z,x)$ be an entire solution of $\tau f = zf$ satisfying $\varphi_c(z,c) = 0$ and $\partial_x \varphi_c(z,x)|_{x =c} = 1$. Just as in the previous section, it follows that $G_c(z) = \lim_{x \uparrow b} \frac{\varphi_c(z,x)}{\varphi_c(0,x)} \exp\Big\lbrace\sum_{\ell=1}^{\conb} \frac{z^\ell}{\ell} \zeta(\ell;(c,x)) \Big\rbrace$ exists and defines the Hadamard characteristic function for the Friedrichs realization $T_{(c,b)}$ of the truncated problem $\tau|_{(c,b)}$. Moreover, $\varphi_c(z,x)$ will be nonprincipal at $x = b$ for $z \not \in \sigma(T_{(c,b)})$. 

To construct an appropriate \emph{entire} solution which is nonprincipal at $x = b$ for \emph{all} $z \in \bbC$ we proceed as follows: Choose any $\lambda$ disjoint from the zeros of $G_c(z)$. It follows that, for any neighbourhood $U$ of $\lambda$ disjoint from $\sigma(T_{(c,b)})$, the function $\theta_U(z,x) = \varphi_c(z,x)/G_c(z)$ is well-defined and satisfies
\begin{align}\label{thetaU}
    \lim_{x \uparrow b} \frac{\theta_U(z,x)}{\varphi_c(0,x)} \exp{\Big\lbrace\sum_{\ell=1}^{\conb} \frac{z^\ell}{\ell} \zeta(\ell; (c,x))\Big\rbrace} = 1, \quad z \in U.
\end{align}
Note the restriction $\lambda \not = \lambda_j$ in the construction of $\theta_U$. However, this issue can be easily resolved by perturbing $c \to c'$, so that $\lambda_j$ is no longer an eigenvalue
of $T_{(c', b)}$ and choosing a $\varphi_{c'}(z,x)$ with $\lim_{x \uparrow b} \frac{\varphi_{c'}(0,x)}{\varphi_c(0,x)} = 1$ and defining $G_{c'}(z)$ as before. This in particular shows that $\zeta(\ell; (c', x)) = \zeta(\ell; (c, x)) + O(1)$ as $x \uparrow b$.

It follows that around \emph{any} $z \in \C$, there exists a corresponding holomorphic solution $\theta_U(z,x)$ such that \eqref{thetaU} is satisfied. Now take an arbitrary pair of entire solutions $\widetilde \varphi_b(z,x)$, $\widetilde \theta_b(z,x)$ which are principal resp.~nonprincipal at the endpoint $x = b$ (their existence is guaranteed by \cite[Lem.~2.2, 2.4]{KST_IMRN}). Then
\begin{align}
    \widetilde \theta_b(z,x) = h_U(z) \theta_U(z,x) + g_U(z) \widetilde \varphi_b(z,x), \quad h_U(z) \not = 0, \ z \in U.
\end{align}
It follows that 
\begin{align}
    \lim_{x \uparrow b} \frac{\widetilde \theta_b(z,x)}{\varphi_c(0,x)}\exp{\Big\lbrace\sum_{\ell=1}^{\conb} \frac{z^\ell}{\ell} \zeta(\ell; (c,x))\Big\rbrace} = h_U(z).
\end{align}
Thus the above limit exists for all $z \in \C$ and defines a nonvanishing entire function, call it $h(z)$. In particular $\theta_b(z,x) = \widetilde \theta_b(z,x)/h(z)$ will satisfy the normalization
\begin{align}\label{NormTheta}
    \lim_{x\uparrow b} \frac{\theta_b(z,x)}{\theta_b(0,x)} \exp{\Big\lbrace\sum_{\ell=1}^{\conb} \frac{z^\ell}{\ell} \zeta(\ell; (c,x))\Big\rbrace} = 1.
\end{align}
This can be viewed as the generalization of the normalization found in \cite[Cor.~4.5]{PS24}. Alternatively, we can write $\lim_{x \uparrow b} \frac{\theta_b(z_1, x)}{\theta_b(z_2, x)} \exp{\Big\lbrace\sum_{\ell=1}^{\conb} \frac{z_1^\ell -z_2^\ell}{\ell} \zeta(\ell; (c,x))\Big\rbrace} = 1$.

We will see in Theorem \ref{TheoremChar} that the normalization \eqref{NormTheta} allows us to write formulas for the characteristic functions of minimal order that do not include partial $\zeta$-values $\zeta(\ell; (c,x))$. This is useful in situations in which the partial $\zeta$-values are difficult to compute. On the other hand, approximate trace formulas for partial $\zeta$-values in case of Schr\"odinger operators satisfying Hypothesis \ref{Hypo::LG} can be found in Theorem \ref{TheoremLG}. Alternatively, one can use the generating function provided in Proposition \ref{Prop:ExactZeta}.

\begin{remark}\label{RemarkAltTheta}
Note that Proposition \ref{Prop:ExactZeta} offers an alternative way of computing the asymptotics of nonprincipal solutions. We obtain 
\begin{align}
\begin{split}
    \varphi_c(z,x) &= \varphi_c(0,x)\exp\Big\lbrace \ln\, \varphi_c(z,x) - \ln \, \varphi_c(0,x) \Big\rbrace
    \\
    &= \varphi_c(0,x)\exp\Big\lbrace - \int_0^z \sum_{\ell=1}^\infty \zeta(\ell; (c,x)) s^{\ell-1}\, ds \Big\rbrace \qquad (\text{for } \, |z| \, \text{ small enough})
    \\
    &= \varphi_c(0,x)\exp\Big\lbrace -  \sum_{\ell=1}^{\conb} \frac{\zeta(\ell; (c,x))}{\ell} z^\ell + \text{ convergent terms in  } x \Big\rbrace,
    \end{split}
    \end{align}
where we have split the divergent terms (as $x \uparrow b$) of the sum from the rest which is convergent. Ignoring the convergent terms, we arrive at the correct asymptotics \eqref{asymNP} (at the endpoint $b$ instead of $a$, but the choice does not matter). The rest of the analysis follows a similar logic as above; we leave the details to the reader.  
\end{remark}
\\
\\
Before we proceed with the construction of principal solutions of minimal order, let us note a few results that allow us to determine Schatten $p$-class membership of resolvents of self-adjoint realizations solely in terms of the behavior of nonprincipal solutions for different eigenparameters. 

\begin{corollary}\label{CorTheta}
  Assume that the self-adjoint realizations of $\tau|_{(c,b)}$ have resolvents in the Schatten $p$-class with $p = \conb+1$, but not $p = \conb$ (i.e, $\lim_{x \uparrow b} \zeta(\conb+1; (c,x)) \in \mathbb R$ exists but $\lim_{x \uparrow b} \zeta(\conb; (c,x)) = \infty$). Let $v_b(z_j, x), v_b(w_j, x)$, $1\leq j \leq n$ denote (any) nonprincipal solutions at $x = b$. Then the limit 
  \begin{align}\label{vRatio}
      \lim_{x \uparrow b} \frac{v_b(z_1, x) \cdots  v_b(z_n,x)}{v_b(w_1, x) \cdots v_b(w_n ,x)}
  \end{align}
  exists and is nonzero if and only if
  $\sum_{j = 1}^n z_j^\ell = \sum_{j =1}^n w_j^\ell$ for $\ell = 1, \dots,\conb$. If $\theta_b(z,x)$ is normalized according to \eqref{NormTheta}, then we even have
  \begin{align} \label{NormTheta2}
      \lim_{x \uparrow b} \frac{\theta_b(z_1, x) \cdots  \theta_b(z_n,x)}{\theta_b(w_1, x) \cdots \theta_b(w_n ,x)} = 1.
  \end{align}
  In particular, this equality does not depend on the choice of $c \in (a,b)$ in \eqref{NormTheta}.
\end{corollary}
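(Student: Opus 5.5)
The plan is to reduce everything to the normalized entire nonprincipal solution $\theta_b(z,x)$ constructed above, which satisfies \eqref{NormTheta}. First observe that any nonprincipal solution $v_b(z,x)$ at $b$ (for fixed $z$) can be written as $v_b(z,x)=\alpha(z)\theta_b(z,x)+\beta(z)\varphi_b(z,x)$ with $\alpha(z)\neq 0$. Here $\varphi_b$ is an entire principal solution at $b$. Since $\varphi_b(z,x)=o(\theta_b(z,x))$ as $x\uparrow b$, we get $v_b(z,x)\sim\alpha(z)\theta_b(z,x)$. Consequently the ratio in \eqref{vRatio} has a finite nonzero limit if and only if the corresponding ratio of the $\theta_b$'s does. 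So it suffices to treat $\theta_b$.

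For $\theta_b$, multiply \eqref{NormTheta} over $z_1,\dots,z_n$ and divide by the same product over $w_1,\dots,w_n$. The factors $\theta_b(0,x)^n$ cancel, and we obtain
\begin{align}
\lim_{x\uparrow b}\frac{\theta_b(z_1,x)\cdots\theta_b(z_n,x)}{\theta_b(w_1,x)\cdots\theta_b(w_n,x)}\exp\Big\lbrace\sum_{\ell=1}^{\conb}\frac{\zeta(\ell;(c,x))}{\ell}\sum_{j=1}^n\big(z_j^\ell-w_j^\ell\big)\Big\rbrace=1.
\end{align}
If the power sums $\sum_{j} z_j^\ell$ and $\sum_{j} w_j^\ell$ agree for $\ell\leq\conb$, the exponential is identically $1$. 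This gives \eqref{NormTheta2} and, with it, the existence of the nonzero limit in \eqref{vRatio}. The identity \eqref{NormTheta2} involves no $\zeta$-values, so it is automatically independent of $c$. Alternatively, one can use $\zeta(\ell;(c',x))=\zeta(\ell;(c,x))+O(1)$, noted earlier.

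For the converse, suppose the ratio converges to a nonzero limit. Then, by the display above, the quantity
\begin{align}
E(x):=\exp\Big\lbrace\sum_{\ell=1}^{\conb} \frac{d_\ell}{\ell}\,\zeta(\ell;(c,x))\Big\rbrace, \qquad d_\ell=\sum_{j=1}^n\big(z_j^\ell-w_j^\ell\big),
\end{align}
must converge to a finite nonzero limit. We have to show that this forces all $d_\ell=0$; this is the main obstacle. The natural tool is a scale separation among the divergent partial $\zeta$-values: $\zeta(\ell;(c,x))\to\infty$ for every $\ell\leq\conb$, and the divergences should satisfy $\zeta(\ell+1;(c,x))=o(\zeta(\ell;(c,x)))$ as $x\uparrow b$.

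I would try to prove this scale separation via the monotone convergence $\lambda_j(c,x)\downarrow\lambda_j$. Split each sum at an index $N$. The tail sums of $\lambda_j(c,x)^{-\ell-1}$ are bounded by $\lambda_N(c,x)^{-1}$ times the tail sums of $\lambda_j(c,x)^{-\ell}$. Since $\lambda_N(c,x)\geq\lambda_N\to\infty$ as $N\to\infty$, and the finitely many head terms stay bounded, the ratio $\zeta(\ell+1;(c,x))/\zeta(\ell;(c,x))$ tends to $0$.

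Granting this, let $\ell_0$ be the smallest index with $d_{\ell_0}\neq0$. The exponent in $E(x)$ is then dominated by $d_{\ell_0}\zeta(\ell_0;(c,x))/\ell_0$, whose modulus tends to infinity. Convergence of $E(x)$ to a nonzero limit requires the exponent to converge modulo $2\pi i$, which is impossible when its modulus blows up. Here some care is needed if $d_{\ell_0}$ is purely imaginary, where the exponent could only oscillate. In that case convergence of $E(x)$ would require the phase $\mathrm{Im}(d_{\ell_0})\zeta(\ell_0;(c,x))/\ell_0$ plus lower-order corrections to converge modulo $2\pi$. That fails because the real-valued dominant term $\zeta(\ell_0;(c,x))$ increases continuously to infinity, so the phase sweeps through full circles. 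Hence all $d_\ell=0$, which proves the stated equivalence.
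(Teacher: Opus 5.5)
Your proposal is correct and follows essentially the same route as the paper. The paper gets \eqref{NormTheta2} by direct computation from \eqref{NormTheta}, reduces arbitrary nonprincipal solutions to $\theta_b$ because they coincide asymptotically up to nonzero multiples, and derives necessity from $\zeta(\ell;(c,x))$ diverging faster than $\zeta(\ell+1;(c,x))$, which forces exact cancellation in the exponential. The paper only asserts this scale separation and the impossibility of convergence, whereas you prove them: the first by splitting the sums using $\lambda_j(c,x)\downarrow\lambda_j$, the second by a continuity argument that also covers complex $d_\ell$.
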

\begin{proof}
    The equality in \eqref{NormTheta2} is obtained from \eqref{NormTheta} through a direct computation. As nonprincipal solutions for a given spectral parameter $z$ are unique up to nonzero multiples, the existence of the limit \eqref{vRatio} also follows. That the condition $\sum_{j =1}^n z_j^\ell = \sum_{j=1}^n w_j^\ell$ for $\ell = 1, \dots ,\conb$ is indeed \emph{necessary} for the convergence of \eqref{vRatio}, is a consequence of the fact that $\lim_{x\uparrow b} \zeta(\ell, (c,x)) = \infty$ for $\ell = 1, \dots, \conb$, with $\zeta(\ell; (c,x))$ diverging faster than $\zeta(\ell +1; (c,x))$ (which can even converge for $\ell = \conb$), implying that an \emph{exact} cancellation in the exponential is necessary for the limit in \eqref{vRatio} to exist (cf.~the proof of Cor.~\ref{CorTildeTheta} below).   
\end{proof}
The above corollary gives us a characterization of the Schatten $p$-class resolvent condition for an integer $p$ in terms of the behavior of nonprincipal solutions, and thus generalizes \cite[Thm.~4.2(iv)]{PS24}. Note that the condition $\sum_{j = 1}^n z_j^\ell = \sum_{j =1}^n w_j^\ell$ for $\ell = 1, \dots,\conb$ is equivalent to $\sum_{j = 1}^n g_{\conb}(z_j) = \sum_{j = 1}^n g_{\conb}(w_j)$ for all polynomials $g_{\conb}$ up to degree $\conb$. 

\begin{corollary}\label{CorTildeTheta}
    In the setting of Corollary \ref{CorTheta}, any other entire nonprincipal solution $\widetilde \theta_b(z,x)$ satisfying \eqref{NormTheta2} must be of the form
\begin{align}\label{TildeTheta}
    \widetilde \theta_b(z,x) = e^{g_{\conb}(z)} \theta_b(z,x) +  \varphi_b(z,x),
\end{align}
where $\theta_b(z,x)$ satisfies \eqref{NormTheta}, $\varphi_b(z,x)$ is an arbitrary entire principal solution at $x = b$, and $g_{\conb}(z)$ is a polynomial of degree at most $\conb$.
\end{corollary}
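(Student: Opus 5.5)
Fix $\widetilde\theta_b$, an entire solution of $\tau f = zf$ that is nonprincipal at $x=b$ for every $z$ and satisfies \eqref{NormTheta2}. The plan is to compare it with the normalized $\theta_b$ from \eqref{NormTheta}. For each fixed $z$, the solutions $\theta_b(z,\cdot)$ and $\varphi_b(z,\cdot)$ form a fundamental system, where $\varphi_b$ is an entire principal solution at $b$ normalized so that $W(\theta_b(z,\cdot),\varphi_b(z,\cdot))=1$. One such choice is $\varphi_b(z,x)=\theta_b(z,x)\int_x^b p(t)^{-1}\theta_b(z,t)^{-2}\,dt$; alternatively, start from any entire principal solution and divide by the Wronskian, which is entire and nonvanishing. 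We then write
\begin{align*}
\widetilde\theta_b(z,x) = h(z)\,\theta_b(z,x) + k(z)\,\varphi_b(z,x),
\end{align*}
with $h(z)=W(\widetilde\theta_b,\varphi_b)$ and $k(z)=W(\theta_b,\widetilde\theta_b)$. Both coefficients are entire in $z$, since they are Wronskians of solutions entire in $z$, evaluated at a fixed $x$. Moreover $h(z)\neq 0$ for all $z$, because $\widetilde\theta_b(z,\cdot)$ is nonprincipal. Since $h$ is entire and zero-free, we may write $h=e^{g}$ with $g$ entire. The term $k(z)\varphi_b(z,x)$ is an entire principal solution; if $k(z_0)=0$ at some point, it is still an entire solution that is principal wherever it is nonzero. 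This is exactly the freedom allowed in \eqref{TildeTheta}, so it remains to show that $g$ is a polynomial of degree at most $\conb$.

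Because $\varphi_b(z,x)=o(\theta_b(z,x))$ as $x\uparrow b$, we have $\widetilde\theta_b(z,x)\sim e^{g(z)}\theta_b(z,x)$. Substituting into \eqref{NormTheta2} for $\widetilde\theta_b$ and using \eqref{NormTheta2} for $\theta_b$ gives
\begin{align*}
\sum_{j=1}^n g(z_j)=\sum_{j=1}^n g(w_j)\quad\text{whenever}\quad \sum_{j=1}^n z_j^\ell=\sum_{j=1}^n w_j^\ell,\quad \ell=1,\dots,\conb.
\end{align*}

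The key step, and the one I expect to be the main obstacle, is to show that an entire $g$ satisfying this functional equation must be a polynomial of degree at most $\conb$. I would try to exploit it with finite-difference configurations. Take $n=2^{m}$ and the multisets of the $m$-th forward difference: $z_\epsilon=z+\sum_i\epsilon_i h_i$ with $\epsilon\in\{0,1\}^m$. Put the points with even $|\epsilon|$ on one side and those with odd $|\epsilon|$ on the other. For $m=\conb+1$, the power sums up to order $\conb$ agree, since $\Delta_{h_1}\cdots\Delta_{h_m}$ annihilates every polynomial of degree less than $m$. The functional equation then yields $\Delta_{h_1}\cdots\Delta_{h_{\conb+1}}g(z)=0$ for all $z,h_i\in\bbC$. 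Dividing by $h_1\cdots h_{\conb+1}$ and letting all $h_i\to0$ gives $g^{(\conb+1)}\equiv0$, so $g$ is a polynomial of degree at most $\conb$.

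If the nondegeneracy of the principal part causes trouble at isolated $z$, say where $k$ or $\varphi_b$ degenerate, I would argue on an open set first and then extend by the identity theorem. Finally, I would check the converse: any function of the form \eqref{TildeTheta} satisfies \eqref{NormTheta2}, because $\sum_j g_{\conb}(z_j)=\sum_j g_{\conb}(w_j)$ under the power-sum condition.
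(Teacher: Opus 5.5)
Your proof is correct. Its first half is the same reduction the paper makes: you write $\widetilde\theta_b = e^{g}\theta_b + k\varphi_b$ and turn \eqref{NormTheta2} into a functional equation for $g$. You supply more detail than the paper does here, namely the Wronskian formulas for the coefficients and the nonvanishing of $h$.

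The real difference is in the key step, showing that $g$ is a polynomial of degree at most $\conb$.

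\emph{The paper's route.} It argues by contradiction through the Taylor expansion at the origin. It writes $g = g_{\conb} + g_m z^m + O(z^{m+1})$, where $m\geq \conb+1$ is the first nonzero higher coefficient. It then uses that the power-sum constraints are homogeneous, replacing $(z_j,w_j)$ by $(\varepsilon z_j,\varepsilon w_j)$ and letting $\varepsilon\to0$. This shows that the constraints up to order $\conb$ would force $\sum z_j^m=\sum w_j^m$. The paper refutes that with the $m$-th roots of unity tested against $m$ zeros.

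\emph{Your route.} You use the cube configuration of an $(\conb+1)$-fold finite difference. It is translation invariant and satisfies the constraints automatically. You obtain $\Delta_{h_1}\cdots\Delta_{h_{\conb+1}} g\equiv 0$, and hence $g^{(\conb+1)}\equiv 0$ directly, at every point.

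\emph{What each buys.} Your argument is constructive and local: it needs only $g\in C^{\conb+1}$, not a power series at $0$. The paper's argument, once the Taylor reduction is made, needs only one explicit test configuration.

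Two small fixes:
\begin{itemize}
\item Each side of your configuration contains $2^{\conb}$ points, not $2^{\conb+1}$ (the latter is the total count).
\item \eqref{NormTheta2} only gives $e^{\sum_j g(z_j)-\sum_j g(w_j)}=1$, so the difference is known only to lie in $2\pi i\mathbb Z$. This is harmless: your limiting step uses only small $h_i$, where $\Delta_{h_1}\cdots\Delta_{h_{\conb+1}}g(z)=O(|h_1\cdots h_{\conb+1}|)$ has modulus below $2\pi$ and therefore vanishes. The paper handles the same issue implicitly through $\varepsilon\to 0$.
\end{itemize}
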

\begin{proof}
    It is clear that any entire nonprincipal solution at $x = b$ must have the form $\widetilde \theta_b(z,x) = e^{f(z)} \theta_b(z,x) +  \varphi_b(z,x)$ for some entire function $f$. Moreover, for this choice of $\widetilde \theta_b$, the limit in \eqref{NormTheta2} will be equal to $\exp\big\lbrace\sum_{j = 1}^n f(z_j) - \sum_{j = 1}^n f(w_j)\big\rbrace$, which, in turn, will be equal to $1$ whenever $f$ is a polynomial of degree at most $\conb$. If $f$ is not of this form, we can write it as $f(z) = f_{\conb}(z) + f_mz^m + O(z^{m+1})$ with $f_{\conb}$ a polynomial of degree at most $\conb$ and $m \geq \conb+1$ with $f_m \not = 0$. Choosing $z_j, w_j$ as above and letting $\varepsilon$ be small, we see that necessarily $\sum_{j=1}^n [f(\varepsilon z_j) - f(\varepsilon w_j)] = \varepsilon^m f_m\sum_{j=1} [z_j^m - w_j^m] + O(\varepsilon^{m+1}) = 0$. By letting $\varepsilon \to 0$, it follows that $\sum_{j=1}^n z_j^m = \sum_{j=1}^n w_j^m$. Now choosing $z_j = \exp(2\pi j/m)$ for $j = 1, \dots, m$ and $w_j = 0$ we get $\sum_{j=1}^m  z_j^\ell = \sum_{j=1}^m w_j^\ell$ for $\ell = 1, \dots, m-1$ (in particular up to $\conb$), but $\sum_{j=1}^m z_j^m = m \not= 0 = \sum_{j=1}^m w_j^m$. Thus it follows that $f(z)$ must be a polynomial of degree at most $\conb$. 
\end{proof}

\begin{remark}\label{remarkTheta}
    Note that the condition \eqref{NormTheta} does not guarantee that $\theta_b(z,x)$ will be of minimal (or even finite) order. This is apparent from \eqref{TildeTheta}, as we can add an \emph{arbitrary} entire solution $\varphi_b(z,x)$ (hence not necessarily of minimal order) which is principal at $x = b$. This is in stark contrast to the \emph{principal} solution that will be analyzed in the subsequent section, where the analogous normalization \eqref{PhiLimit} is in fact equivalent to the order being minimal. Since our analysis is not affected by the nonprincipal solution not being of minimal order, we leave this as an open problem.
\end{remark}

\begin{problem}\label{Prob:Nonprincipal}
Is there a natural construction of a nonprincipal solution that guarantees it is of minimal order in direct parallel to our construction of the principal solution in Section \ref{Sect::Principal}?
\end{problem}
The above problem is important with regards to local Borg--Marchenko uniqueness results as demonstrated in the proof of \cite[Thm. 7.2]{KST_IMRN} (see also Rmk.~\ref{Remark:HN} below). While constructing (or even proving the existence) of nonprincipal solutions of minimal order in case of Schatten $p$-class resolvents appears highly nontrivial, we can show that every \emph{naturally normalized system} of principal and nonprincipal solutions (see \cite[Def.~7.1]{PS24}) in case of a finite regularization index is of minimal order $\kappa = \frac{1}{2}$. The notions of the regularization index and a naturally normalized system are recalled in the proof below.
\begin{proposition}\label{Prop:Min}
    Assume self-adjoint realizations of $\tau|_{(a,c)}$ have trace-class resolvents, $\tau$ has finite regularization index at $x=a$, and let $\varphi_a$, $\theta_a$ be a naturally normalized system of principal, nonprincipal solutions at $x = a$ (see \cite[Def.~7.1]{PS24}). Then $\theta_a(z,x)$ is of minimal order $\kappa = \frac{1}{2}$ for all $x \in (a,b)$. 
\end{proposition}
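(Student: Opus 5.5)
We prove that $\theta_a(z,\cdot)$ is the Dirichlet characteristic function of a regular truncated problem, up to a factor of order at most $1/2$; since order $\ge \kappa = \tfrac12$ is automatic, this gives minimality.

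First we recall what a finite regularization index and a naturally normalized system mean in \cite{PS24}. Finite regularization index at $x=a$ means that, after a finite number of iterated Liouville-type transformations built from $u_a(0,\cdot)$ and $v_a(0,\cdot)$, the expression becomes regular at $a$. The naturally normalized pair $(\varphi_a,\theta_a)$ then corresponds, via these transformations, to the pair of solutions of the regularized problem that satisfy the standard initial conditions $(y,py')=(0,1)$ and $(1,0)$ at $a$. Accordingly, $\theta_a(z,x)$ admits a representation $\theta_a(z,x)=\sum_{j\ge0}\theta_{a,j}(x)z^j$, with iterated-integral coefficients analogous to \eqref{phin}, and $W(\theta_a,\varphi_a)=1$.

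Fix $x\in(a,b)$ and assume without loss of generality that $\theta_a(0,x)\neq0$; otherwise shift $z$. The zeros of $z\mapsto\theta_a(z,x)$ are exactly the eigenvalues $\mu_j(x)$ of the self-adjoint realization of $\tau|_{(a,x)}$ that has the ``nonprincipal'' (Neumann-type) generalized boundary condition $\widetilde g^{\,\prime}(a)=0$, i.e.\ $\alpha=\pi/2$ in Theorem \ref{extensions} $(ii)$ applied to $(a,x)$, and a Dirichlet condition at $x$. In the quasi-regular/finite-regularization-index setting this problem has Weyl asymptotics $\mu_j(x)\propto j^2$, so the zeros have exponent of convergence $\tfrac12$ and rank $0$. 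Hadamard's theorem therefore gives
\begin{align*}
\theta_a(z,x)=\theta_a(0,x)\,e^{g_x(z)}\prod_{j\in\bbN}\Big(1-\frac{z}{\mu_j(x)}\Big),
\end{align*}
provided $\theta_a(\cdot,x)$ has finite order. It thus suffices to show that $\theta_a(\cdot,x)$ has order at most $\tfrac12$: then $g_x$ is constant and the order equals $\tfrac12$ exactly.

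The order bound is the main obstacle. We would obtain it by pulling back to the regularized problem. There, the transformed $\theta$ is the solution of a regular Sturm--Liouville equation with $z$-independent initial data. Standard Gronwall estimates for regular problems give $|y(z,x)|\le C\exp\big(|\Im\sqrt z|\int_a^x\sqrt{r/p}\,dt\big)$, which is of order $\tfrac12$. Since the regularization index is finite, the transformation back is a finite composition, whose coefficients depend polynomially on $z$ at worst. We expect that, for the natural normalization, these coefficients are in fact $z$-independent (this is precisely the content of ``naturally normalized''). Multiplying by polynomials in $z$ does not increase the order.

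Alternatively, if tracking this transformation proves cumbersome, we would argue directly. The coefficients $\theta_{a,j}(x)$ satisfy $|\theta_{a,j}(x)|\le C_x^{\,j}/(2j)!$-type bounds, which follow by induction from the iterated integral representation once one checks that the first kernel integral converges. That convergence is exactly the trace-class assumption $\cona=0$ together with the naturally normalized choice, which ensures that $\theta_{a,1}$ is finite at $a$ as in \cite[Sect.~3]{PS24}. Such coefficient bounds give order $\le\tfrac12$ by the standard coefficient formula for the order of an entire function. In either route, the delicate point is uniform control of the behavior near the singular endpoint $a$. This is where the finiteness of the regularization index, rather than merely $\cona=0$, is needed.
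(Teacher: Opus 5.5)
Your argument has a genuine gap, and it lies exactly where the proposition has content. Your second paragraph only makes sense when $a$ is limit circle, i.e.\ the quasi-regular situation. The proposition is meant to cover limit point endpoints with trace class resolvents and finite regularization index, and there the paragraph breaks down:
\begin{itemize}
\item Theorem \ref{extensions}$(ii)$ requires $a$ to be limit circle, and $\theta_a(z,\cdot)\notin L^2$ near $a$.
\item The only self-adjoint realization of $\tau|_{(a,x)}$ with a Dirichlet condition at $x$ is the Friedrichs one, whose eigenvalues are the zeros of $\varphi_a(\cdot,x)$, not of $\theta_a(\cdot,x)$.
\item Hence the zeros of $\theta_a(\cdot,x)$ are not the spectrum of any self-adjoint problem. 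They need not be real or interlace with those of $\varphi_a(\cdot,x)$ (cf.\ Remark \ref{Remark:HN}), and no Weyl law is available for them.
\end{itemize}
So your claim that the order is automatically $\ge\frac12$ is unsupported.

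More importantly, the upper bound on the order, which you rightly call the real content, is proved by neither route.

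\emph{Route one.} It rests on a picture of finite regularization index (iterated transformations producing a regular endpoint, with $\theta_a$ pulled back to the $(1,0)$ solution) that is not the definition in use. That definition is $\ell_a=\sup\{n:\lim_{x\downarrow a}\varphi_{a,0}(x)/\theta_{a,n}(x)=0\}$, stated via Taylor coefficients. Moreover, a Liouville-type transformation with $z$-independent coefficients is unitary and preserves the limit point/limit circle classification. The decisive $z$-independence is only stated as an expectation.

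\emph{Route two.} It fails concretely. The iterated-integral recursion with base point $a$ holds for $\theta_{a,j}$ only when $j\ge\ell_a+1$. For $j=1$ at a limit point endpoint, the kernel integral contains the divergent term $\varphi_{a,0}(x)\int_a^x\theta_{a,0}^2 r\,dt$; trace class only gives $\int_a\varphi_{a,0}\theta_{a,0}r\,dt<\infty$. Absolute bounds of the form $C^j/(2j)!$ with a kernel unbounded at $a$ are precisely the uniformity problem you leave open.

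\emph{The missing idea.} Bound the $\theta_{a,j}$ not absolutely but relative to the $\varphi_{a,j}$. Their generating function $\varphi_a(\cdot,x)$ is already known to be of minimal order, via the Hadamard product over the Friedrichs eigenvalues (Corollary \ref{CorPhiTilde}).
\begin{itemize}
\item Expanding about a $\lambda$ below the spectrum, \cite[Thm.~7.6]{PS24} at $j=0$ gives $|\varphi_{a,0}|\le C|\theta_{a,\ell_a}|$ and $|\theta_{a,\ell_a+1}|\le C|\varphi_{a,0}|$ on $(a,c)$.
\item The sequences $\varphi_{a,j}$ ($j\ge1$) and $\theta_{a,j}$ ($j\ge\ell_a+1$) obey the same recursion, whose kernel has fixed sign. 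Induction then gives $|\varphi_{a,j}|\le C|\theta_{a,\ell_a+j}|$ and $|\theta_{a,\ell_a+1+j}|\le C|\varphi_{a,j}|$ with $C$ independent of $j$.
\item The Taylor-coefficient formula for the order is insensitive to a finite index shift. Hence $\theta_a(\cdot,x)$ and $\varphi_a(\cdot,x)$ have the same order, which gives the upper and lower bounds simultaneously.
\item Finally, $\kappa=\frac12$ follows from the Weyl asymptotics of the Friedrichs eigenvalues.
\end{itemize}
This is where the finiteness of $\ell_a$ is genuinely used.
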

\begin{proof}
According to \cite[Sect.~7]{PS24}, being a naturally normalized entire system is equivalent to the following condition: writing $\varphi_a(z,x) = \sum_{j = 0}^\infty \varphi_{a,j}(x)(z-\lambda)^j$, $\theta_a(z,x) = \sum_{j = 0}^\infty \theta_{a,j}(x)(z-\lambda)^j$, we have
\begin{align}
    \lim_{x\downarrow a}\frac{\varphi_{a,j+1}(x)}{\varphi_{a,j}(x)} = 0, \qquad \lim_{x\downarrow a}\frac{\theta_{a,j+1}(x)}{\theta_{a,j}(x)} = 0, \qquad \text{for all } \ j \in \mathbb N_0.
\end{align}
 The regularization index (at $x = a$) is then defined via 
 \begin{align}
     \ell_a = \sup\Big\lbrace n \  \colon \lim_{x\downarrow a} \cfrac{\varphi_{a,0}(x)}{\theta_{a,n}(x)} = 0\Big\rbrace,
 \end{align}
 which is assumed here to be finite, that is, $\ell_a \in \mathbb N_0$. 
 
 Now assume that $\lambda$ is below the lowest Friedrichs eigenvalue, in which case $\varphi_a(\lambda,x) = \varphi_{a,0}(x)$ is nonvanishing and nonprincipal at $x = b$. We can assume that $\theta_{a,0}(x) = \theta_a(\lambda, x) = \varphi_a(\lambda, x) \int_x^b \frac{1}{p(t)\varphi_a^2(\lambda, t)} dt$ (any other choice would only differ by a constant multiple of $\varphi_a(\lambda, x)$), in which case $\theta_{a,0}(x)$ is nonvanishing as well. It now follows from \cite[Eq.~(3.5), (7.1)]{PS24} that for a naturally normalized system we have
 \begin{align}\label{recPhi}
     \varphi_{a,j}(x) &= \int_a^x [\varphi_a(\lambda,t)\theta_a(\lambda, x) - \theta_a(\lambda, t)\varphi_{a}(\lambda, x)]\varphi_{a, j-1}(t) r(t) dt, \qquad x \in (a,b), \ j \geq 1, 
 \\\label{recTheta}
     \theta_{a,j}(x) &= \int_a^x [\varphi_a(\lambda,t)\theta_a(\lambda, x) - \theta_a(\lambda, t)\varphi_{a}(\lambda, x)]\theta_{a, j-1}(t) r(t) dt, \qquad x \in (a,b), \ j \geq \ell_a + 1, 
 \end{align}
 and from \cite[Thm.~7.6]{PS24}
 \begin{align}
     \lim_{x\downarrow a} \frac{\varphi_{a, j}(x)}{\theta_{a, j + \ell_a}(x)} =0, \qquad \lim_{x\downarrow a} \frac{\theta_{a, j + \ell_a+1}(x)}{\varphi_{a, j}(x)} =0, \qquad \text{for all } \ j \in \mathbb N_0.
 \end{align}
 Upon choosing $j = 0$ above, we see that for any $c \in (a,b)$ there is a $C$ such that $|\varphi_0(x)| \leq C |\theta_{\ell_a}(x)|$  and  $|\theta_{\ell_a + 1}(x)| \leq C|\varphi_0(x)|$ for $x \in (a,c)$. Now due to the recursions \eqref{recPhi}, \eqref{recTheta} and the fact that $\varphi_a(\lambda,t)\theta_a(\lambda, x) - \theta_a(\lambda, t)\varphi_{a}(\lambda, x)$ has a fixed sign for $a < t < x < b$ (see \cite[Thm. 2.2(iii)]{NZ92})
    it follows by induction that 
    $|\varphi_j(x)| \leq C|\theta_{\ell_a+j}(x)| $ and $ |\theta_{\ell_a + 1+j}(x)| \leq C|\varphi_j(x)|,$ for $ x \in (a,c)$
    (it is important here that $c$ does not depend on $j$). Using the formula for the order of an entire function in terms of its Taylor coefficients (see \cite[Ch.~1.3, Thm.~2]{Levin1996}), it follows that $\varphi_a(z,x)$ and $\theta_a(z,x)$ have the same order $\kappa$ for $x \in (a,c)$. As $c$ was arbitrary, the claim remains true for $x \in (a,b)$. As a naturally normalized principal solution $\varphi_a$ is of minimal order by Corollary \ref{CorPhiTilde} (it satisfies $\lim_{x\downarrow a} \frac{\varphi_a(z,x)}{\varphi_a(0,x)} = 1$, see \cite[Def.~7.1]{PS24}), it follows that $\theta_a(z,x)$ is also of minimal order $\kappa$ for $x \in (a,b)$. Finally, $\kappa = \frac{1}{2}$ follows from \cite[p.~33]{WW14} as explained in \cite[Prop. 9.5]{PS24}.
\end{proof}
\begin{remark} \label{Remark:HN} Open Problem \ref{Prob:Nonprincipal} can also be motivated by the observation that in the trace class resolvent case of Schr\"odinger operators with finite regularization index, the associated singular $m$-functions are in the class of generalized Herglotz--Nevanlinna functions provided the solutions are naturally normalized as above -- see \cite[Sect.~10]{PS24}. We believe that in the general Schatten $p$-class resolvent case the underlying singular $m$-functions might belong to some natural extension of the generalized Herglotz--Nevanlinna class, provided \emph{both} solutions are of minimal order (see also \cite[Open Prob. 10.6]{PS24}). 

As previously mentioned, the construction of such an entire fundamental system of solutions with both solutions being of minimal growth order was considered in \cite[Sect. 6]{KST_IMRN}, albeit under additional assumptions on eigenvalue growth and their interlacing properties. The constructed system was then used to show a Borg--Marchenko type uniqueness result in \cite[Thm. 7.2]{KST_IMRN}. However, the assumptions made there do not hold for the problems we consider here. In fact, Proposition \ref{Prop:Min} can be seen as removing Hypothesis 6.1 $(ii)$ from \cite[Lem. 6.10]{KST_IMRN}. Moreover, one can see from the examples below that the typical expected interlacing of zeros of the entire principal and nonprincipal solutions need not hold if \eqref{NormTheta} holds whenever the endpoint under consideration is limit point.

As such, not only does the above problem have multiple applications, but it also seems to be challenging in the generality considered here.
\end{remark}

We finish this section with an illustrative example.

\subsubsection{Power potentials $($including harmonic oscillator and Airy$)$}\label{Sect:PowerPotentials} We begin by considering the harmonic oscillator $\tau y = -y''(x)+ x^2 y(x)=z y(x),$ for $x\in\bbR$. The special solution of $\tau y = -y$ which is nonprincipal at both endpoints is given by $v_\infty(-1, x) = e^{\frac{x^2}{2}}$ (we will focus on the endpoint $\infty$). A corresponding principal solution is readily given by $u_\infty(-1, x) = e^{\frac{x^2}{2}}\int_x^\infty e^{-t^2}dt = \frac{e^{-\frac{x^2}{2}}}{2x}(1 + o(1))$ for $x \to \infty$. From Weyl's law it follows that the eigenvalues grow linearly, implying that $\kappa_\infty = \coninf = 1$, that is, we only require $\zeta(1; (c,x))$. We can estimate the partial $\zeta$-value $\zeta(1; (c,x))$ via the formula 
\begin{align}\label{Zeta1Harm}
    \zeta(1; (c,x)) = \int_{c}^x u_\infty(\lambda, t) v_\infty(\lambda, t) r(t) dt + \underbrace{O(\zeta(2; (c,x)))}_{=O(1), \ \text{as} \, \coninf = 1} = \frac{\ln \, x}{2} + O(1),
\end{align}
which is shown in Lemma \ref{LemmaInt1}. It now follows readily from \eqref{NormTheta} that
\begin{align}\label{HarmV}
    v_\infty(z, x) \propto v_\infty(-1, x)\exp\Big\lbrace-(z+1)\frac{\ln \, x}{2} \Big\rbrace = x^{-\frac{z+1}{2}} e^{\frac{x^2}{2}},
\end{align}
which is (up to a multiplicative prefactor) exactly the leading large-$x$ asymptotics of $V(-z/2,\sqrt{2}x)$ which is one of the standard parabolic cylinder functions \cite[Sect. 12.9]{DLMF} and satisfies $\tau V(-z/2,\sqrt{2}x) = z V(-z/2,\sqrt{2}x)$. Moreover, using \eqref{asymP} (which we will show in the next section), it follows that
\begin{align}\label{HarmU}
    u_\infty(z, x) \propto u_\infty(-1, x)\exp\Big\lbrace(z+1)\frac{\ln \, x}{2} \Big\rbrace = \frac{x^{\frac{z-1}{2}}}{2} e^{-\frac{x^2}{2}},
\end{align}
which recovers the leading asymptotics of the other parabolic cylinder function $U(-z/2,\sqrt{2}x)$ which converges to $0$ at infinity \cite[Sect. 12.9]{DLMF}.

We now turn to the general power potential equation $-y''(x)+x^d y(x)=z y(x),$ $x\in(0,\infty)$, $d>0$ (note the restriction to the half-line here). Closed form solutions for general $z\in\bbC$ do not exist except in a few special cases. However, linearly independent solutions to $-y''(x)+x^d y(x)=0$ (with unit Wronskian) are given by
\begin{align}
\begin{split}
u_{\infty,d}(0,x)&=\sqrt{x}K_{1/(2+d)}\Big(\frac{2x^{(2+d)/2}}{2+d}\Big),\;\:\textrm{and}\;\;
v_{\infty,d}(0,x)=\frac{2\sqrt{x}}{2+d}I_{1/(2+d)}\Big(\frac{2x^{(2+d)/2}}{2+d}\Big),\quad d>0,
\end{split}
\end{align}
where $K_\nu,I_\nu$ are modified Bessel functions \cite[Sect. 10]{DLMF}. Note that, by \cite[Sect. 10.4]{DLMF}, only the first function is $L^2$ near infinity. 

Now, recalling that the leading order eigenvalue growth is $\lambda_n \propto n^{2d/(d+2)}$ (see \cite[Eq. (7.1.7)]{Ti62} for explicit constant), one sees that self-adjoint realizations have trace class resolvent whenever $d>2$, while the resolvents are Hilbert--Schmidt whenever $2/3<d\leq 2$. Thus we now assume that $0<d\leq 2$.

Utilizing \cite[Eqs. 10.4.1, 10.4.2]{DLMF} together with Lemma \ref{LemmaInt1} (more precisely \eqref{Lambda0Formula}) one finds as above
\begin{equation}\label{eq:zeta1}
\zeta_{d}(1;(c,x))=\begin{cases}
\frac{1}{2-d}x^{\frac{2-d}{2}}+O(1), & 0<d<2,\\
\frac{1}{2}\ln\, x +O(1), & d=2.
\end{cases}
\end{equation}

As with the special case of the Harmonic oscillator, it now follows from \eqref{NormTheta}, \eqref{asymP}, and \cite[Eqs. 10.4.1, 10.4.2]{DLMF} that
\begin{align}
    v_{\infty,d}(z, x) &\propto
    \begin{cases}
    x^{-\frac{d}{4}}\exp\big(\tfrac{2}{2+d}x^{\frac{2+d}{2}}-\frac{z}{2-d}x^{\frac{2-d}{2}}\big), & 2/3<d<2,\\[2mm]
    x^{-\frac{z+1}{2}}\exp\big(\tfrac{1}{2}x^{2}\big), & d=2,
    \end{cases}\label{eq:vinf}\\
    u_{\infty,d}(z, x) &\propto
    \begin{cases}
    x^{-\frac{d}{4}}\exp\big(-\tfrac{2}{2+d}x^{\frac{2+d}{2}}+\frac{z}{2-d}x^{\frac{2-d}{2}}\big), & 2/3<d<2,\\[2mm]
    x^{\frac{z-1}{2}}\exp\big(-\tfrac{1}{2}x^{2}\big), & d=2.
    \end{cases} \label{eq:uinf}
\end{align}
Choosing $d=2$ yields the previous results as expected. Moreover, choosing $d=1$ corresponds to the Airy differential equation (see also Sect. \ref{Sect:Airy}) and one readily verifies via \cite[Sect. 9.7(ii)]{DLMF} that the leading asymptotics found here agree with those of the Airy functions $\Bi(\dott)$ and $\Ai(\dott)$, respectively.

Finally, we point out that whenever $0<d\leq 2/3$, the above solutions will now be proportional to \eqref{eq:vinf}, \eqref{eq:uinf}  times
\begin{equation}
\exp{\bigg\lbrace\mp\sum_{\ell=2}^{\big\lfloor\tfrac{d+2}{2d} \big\rfloor} \frac{z^\ell}{\ell} \zeta_{d}(\ell; (c,x))\bigg\rbrace},\quad 0<d\leq 2/3,\ 
\end{equation}
respectively. For small values of $\ell$, one can readily utilize Proposition \ref{Prop:ExactZeta}.

\begin{remark}
    The above large $x$-asymptotics of the principal and nonprincipal solutions can be also obtained using a Liouville--Green (also know as WKB) expansion, as the power potential is twice differentiable cf.~\cite[Ch.~6]{Ol97}. We would like to stress that our formulas also hold under the minimal $L^1_{loc}$ regularity assumptions provided the corresponding resolvent are in some Schatten $p$-class. A detailed comparison of our formulas to the classic Liouville--Green analysis is provided in Section~\ref{Sect:LG}. 
\end{remark}
%%%%%%%%%%%%%%%%%%%%%%%%%%%%%%%
%%%%%%%%%%%%%%%%%%%%%%%%%%%%%%%

\subsection{Construction of principal solutions of minimal order}\label{Sect::Principal}
Next we will prove the existence of solutions $\varphi_a(z,x)$ of $\tau f = zf$ which are principal at $x = a$ and entire of minimal order in $z$. Assume that the self-adjoint realizations of $\tau|_{(a,c)}$, $c \in (a,b)$ have eigenvalues with a finite exponent of convergence $\kappa_a$ (this exponent will neither depend on $c$, nor on the particular self-adjoint realization). We will assume that $\cona \geq 1$, otherwise the resolvents are trace class and the construction can be found in \cite{PS24} (see also the previous section). We summarize these assumptions in the following Hypothesis:
\begin{hypothesis}\label{Hypo:nonTrace}
   $(i)$ Hypothesis \ref{h3a}  holds and $x=b$ is in the limit point case.\\[1mm]
    $(ii)$ The Sturm--Liouville differential expression $\tau|_{(a,c)}$ for $c \in (a,b)$ has self-adjoint realizations with non-trace class resolvents, that is $\cona\geq1$.
\end{hypothesis}

We start with the following preparatory lemma. While this result is likely contained somewhere in the literature, we prove it here for the sake of completeness.
\begin{lemma}\label{LemmaEpsilon}
    Assume that $\tau f = \lambda f$ is nonoscillatory at $x = a$, and let $u_a(\lambda, x)$ be a corresponding principal solution at $x = a$. Take any $c \in (a,b)$ with $u_a(\lambda, c) \not = 0$ and denote by $u^{(\varepsilon)}_a(\lambda, x)$ a family of solution of $\tau f = \lambda f$ with $u^{(\varepsilon)}_a(\lambda, a+ \varepsilon) = 0$ and $u^{(\varepsilon)}_a(\lambda, c) \to u_a(\lambda, c)$ for $\e \to 0$ (here we assume $a + \varepsilon < c$). Then for any $x \in (a,b)$ we have $\lim_{\e \to 0} u^{(\e)}_a(\lambda, x) = u_a(\lambda, x)$.  
\end{lemma}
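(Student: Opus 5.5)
Fix a nonprincipal solution $v_a(\lambda,\cdot)$ at $a$ normalized so that $W(v_a(\lambda,\cdot),u_a(\lambda,\cdot))=1$. Since $\tau f=\lambda f$ is nonoscillatory at $a$, both $u_a$ and $v_a$ are nonvanishing on some interval $(a,a+\delta)$. We may therefore restrict to $\varepsilon<\delta$. Every solution is a linear combination of $u_a$ and $v_a$, so we write
\begin{align}
u^{(\varepsilon)}_a(\lambda,x)=\alpha_\varepsilon u_a(\lambda,x)+\beta_\varepsilon v_a(\lambda,x).
\end{align}
The lemma then amounts to showing that $\alpha_\varepsilon\to 1$ and $\beta_\varepsilon\to 0$ as $\varepsilon\to 0$. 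Indeed, for each fixed $x\in(a,b)$ convergence of the coefficients immediately gives $u^{(\varepsilon)}_a(\lambda,x)\to u_a(\lambda,x)$, and in fact locally uniformly in $x$ together with quasi-derivatives.

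The two conditions imposed on the family determine the coefficients. The condition $u^{(\varepsilon)}_a(\lambda,a+\varepsilon)=0$ gives
\begin{align}
\beta_\varepsilon=-\alpha_\varepsilon\,\frac{u_a(\lambda,a+\varepsilon)}{v_a(\lambda,a+\varepsilon)}.
\end{align}
Here the division is legitimate because $v_a$ does not vanish near $a$. The principal-solution property $u_a=o(v_a)$ as $x\downarrow a$ shows that the ratio $r_\varepsilon:=u_a(\lambda,a+\varepsilon)/v_a(\lambda,a+\varepsilon)$ tends to $0$. The condition at $c$ reads
\begin{align}
\alpha_\varepsilon\bigl[u_a(\lambda,c)-r_\varepsilon v_a(\lambda,c)\bigr]\to u_a(\lambda,c).
\end{align}
Since $u_a(\lambda,c)\neq 0$ and $r_\varepsilon\to 0$, the bracket tends to $u_a(\lambda,c)\neq0$. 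Hence $\alpha_\varepsilon\to 1$, and consequently $\beta_\varepsilon=-\alpha_\varepsilon r_\varepsilon\to 0$. This completes the argument.

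There is no real obstacle here. The only points needing care are the following.
\begin{itemize}
\item The existence of a nonvanishing $v_a$ near $a$ must be justified. It follows from nonoscillation via the Leighton--Morse construction recalled before Definition~\ref{Def3.5}.
\item The hypothesis $u_a(\lambda,c)\neq 0$ is essential. Without it the normalization at $c$ would not pin down $\alpha_\varepsilon$.
\end{itemize}
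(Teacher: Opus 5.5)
Your proof is correct and follows essentially the paper's argument. You decompose against a nonprincipal solution $v_a$, use the zero at $a+\varepsilon$ together with $u_a=o(v_a)$ to kill the $v_a$-coefficient, and use the normalization at $c$ (where $u_a(\lambda,c)\neq 0$) to force the remaining coefficient to $1$. The only cosmetic difference is that you expand $u^{(\varepsilon)}_a$ in the fixed basis $\{u_a,v_a\}$, whereas the paper expands $u_a$ in $\{u^{(\varepsilon)}_a,v_a\}$; your choice spares you the check that $u^{(\varepsilon)}_a$ and $v_a$ are linearly independent.
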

\begin{proof}
    Let $v_a(\lambda, x)$ be any nonprincipal solution of $\tau f = \lambda f$ at $x = a$. Then for $\varepsilon$ small enough $v_a(\lambda, x)$ has no zeros in $(a, a + \varepsilon]$, and hence is linearly independent of $u^{(\varepsilon)}_a(\lambda, x)$. In particular, there are unique coefficients $\alpha^{(\varepsilon)}$, $\beta^{(\varepsilon)}$ such that
    \begin{align} \label{alphabeta}
        \alpha^{(\e)} u^{(\e)}_a(\lambda, x) + \beta^{(\e)} v_a(\lambda, x) = u_a(\lambda, x).
    \end{align}
    By evaluating this equation at $x = a + \e$ we obtain $\beta^{(\e)} = u_a(\lambda, x)/v_a(\lambda, x)$. Hence, we see that as $\e \to 0$, we have that $\beta^{(\e)} \to 0$ (recall that $u_a(\lambda, x)$ is principal). Now, by evaluating \eqref{alphabeta} at $x = c$ we conclude that $\alpha^{(\e)} \to 1$ as $\e \to 0$. This finishes the proof.
\end{proof}
As this is more natural for our main application, we will construct the entire principal solution at the left endpoint $x = a$. As before, $\lambda_j(c,d)$ denotes the $j^{th}$ Dirichlet eigenvalue corresponding to $\tau|_{(c,d)}$, and $\zeta(\ell; (c,d)) = \sum_{j \in \bbN} \lambda_j(c,d)^{-\ell}$. Then we have (cf.~the arguments after Hyp.~\ref{Hypo:Trace})
\begin{align}
    \widetilde \varphi^{(\varepsilon)}_a(z,x) &= \widetilde \varphi^{(\varepsilon)}_a(0,x)\prod_{j \in \bbN} \Big(1 - \frac{z}{\lambda_j(a+\eps, x)} \Big) \notag \\
    &= \widetilde \varphi^{(\eps)}_a(0,x)\exp\Big\lbrace -\sum_{\ell=1}^{\cona} \frac{z^\ell}{\ell} \zeta(\ell; (a+\eps, x)\Big\rbrace\prod_{j\in \bbN} E\Big(\frac{z}{\lambda_j(a+\eps, x)} ,\cona\Big), 
\end{align}
with $\widetilde \varphi^{(\eps)}_a(0, a+\eps) = 0$ and $c$ is chosen such that $\widetilde \varphi^{(\eps)}_a(0, c) \not = 0$ for all $\eps$ sufficiently small. With this is mind, we normalize $\widetilde \varphi^{(\eps)}_a$ such that $\widetilde \varphi^{(\eps)}_a(0, c) = 1$ (for $\eps$ sufficiently small).

To guarantee the convergence to a principal solution near $x = a$ as $\eps \to 0$, we need to make sure, according to Lemma \ref{LemmaEpsilon}, that the rescaled $\varphi^{(\eps)}_a(z,x)$ remains bounded at, say, $x = c$. For this, we assume that $0$ is not a Friedrichs eigenvalue for $\tau|_{(a,c)}$, in particular $\lambda_j(a+\varepsilon', c) \not = 0$ for all $\varepsilon' \in [0,\varepsilon]$ (otherwise shift $c$). We now define
\begin{align}
    \varphi^{(\eps)}_a(z,x) = \widetilde \varphi^{(\eps)}_a(z,x)\exp\Big\lbrace \sum_{\ell=1}^{\cona} \frac{z^\ell}{\ell} \zeta(\ell; (a+\eps, c)\Big\rbrace.
\end{align}
Notice that $\varphi^{(\eps)}_a(z,c) = \underbrace{\widetilde \varphi^{(\eps)}_a(0,c)}_{=1} \prod_{j=1}^\infty E\Big(\frac{z}{\lambda_j(a+\eps, c)},\cona\Big) \to \prod_{j=1}^\infty E\Big(\frac{z}{\lambda_j(a, c)},\cona\Big)$ as $\eps \to 0$. Hence by Lemma \ref{LemmaEpsilon} we can define
\begin{align}\nonumber
    \varphi_a(z,x) &= \lim_{\eps \to 0} \varphi^{(\eps)}_a(z,x)
    \\\label{ConstPhi}
    &= \lim_{\eps \to 0} \varphi^{(\eps)}_a(0,x)\exp\Big\lbrace \sum_{\ell=1}^{\cona} \frac{z^\ell}{\ell} \big[\zeta(\ell; (a+\eps, c)) -\zeta(\ell; (a+\eps, x))\big]\Big\rbrace\prod_{j\in \bbN} E\Big(\frac{z}{\lambda_j(a+\eps, x)} ,\cona\Big) 
    \nonumber \\
    &= \varphi_a(0,x) \prod_{j\in \bbN} E\Big(\frac{z}{\lambda_j(a, x)} ,\cona\Big) \exp\Big\lbrace \sum_{\ell=1}^{\cona} \lim_{\eps \to 0}\frac{z^\ell}{\ell} \big[\zeta(\ell; (a+\eps, c)) -\zeta(\ell; (a+\eps, x))\big]\Big\rbrace.
\end{align}
In particular, Lemma \ref{LemmaEpsilon} implies that the limit $\lim_{\eps \to 0}\zeta(\ell; (a+\eps, c)) -\zeta(\ell; (a+\eps, x))$ must exist. For now, if we denote $\widetilde \zeta(\ell; (x,c)) = \lim_{\eps \to 0} \zeta(\ell; (a+\eps, c)) -\zeta(\ell; (a+\eps, x))$ then we have the analog of \eqref{NormTheta} for principal solutions
\begin{align}\label{NormPhi}
    \lim_{x \downarrow a} \frac{\varphi_a(z,x)}{\varphi_a(0,x)}\exp{\Big\lbrace-\sum_{\ell=1}^{\cona} \frac{z^\ell}{\ell} \widetilde \zeta(\ell; (x,c))\Big\rbrace} = 1, 
\end{align}
which follows from the representation \eqref{ConstPhi} and the fact that $\lambda_j(a,x) \to \infty$ as $x \downarrow a$. Note that the above formula contains $\widetilde \zeta(\ell; (x,c))$ instead of the actual partial $\zeta$-values $\zeta(\ell; (x,c))$. However, it turns out that both are equivalent in the sense that $\lim_{x\downarrow a}  \zeta(\ell; (x,c)) - \widetilde \zeta(\ell; (x,c)) = 0$ for $\ell \in \mathbb N$ as we show in the following proposition.
\begin{proposition}\label{Prop:RankOne}
    We have that 
    \begin{align}\label{Zeta3}
       \lim_{x\downarrow a} \Big\lbrace\zeta(\ell, (x,c)) - \lim_{\varepsilon \to 0} \big[\zeta(\ell; (a+\varepsilon, c)) - \zeta(\ell; (a+\varepsilon, x))\big] \Big \rbrace = 0.
    \end{align}
\end{proposition}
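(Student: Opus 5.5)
The plan is to interpret the quantity in \eqref{Zeta3} as a trace of a finite-rank perturbation. I expect the operator-theoretic part to be clean and the vanishing of the resulting rank-one error term as $x\downarrow a$ to be the real difficulty.

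Fix $\ell\in\bbN$ and, for $a\le a+\varepsilon<x<c$, consider the Dirichlet realization $T_\varepsilon$ of $\tau|_{(a+\varepsilon,c)}$ (for $\varepsilon=0$, the Friedrichs realization on $(a,c)$). Also consider the decoupled operator $T_\varepsilon^{x}=T_{(a+\varepsilon,x)}\oplus T_{(x,c)}$, obtained by adding a Dirichlet condition at the interior point $x$. Its spectrum is exactly $\{\lambda_j(a+\varepsilon,x)\}\cup\{\lambda_j(x,c)\}$. Shifting $c$ if necessary (as in the construction preceding \eqref{ConstPhi}), we may assume $0$ is in the resolvent set of all operators involved. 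The quantity inside the limit is then
\begin{align*}
D_\varepsilon(x):=\zeta(\ell;(a+\varepsilon,c))-\zeta(\ell;(a+\varepsilon,x))-\zeta(\ell;(x,c))=\operatorname{Tr}\big[T_\varepsilon^{-\ell}-(T_\varepsilon^{x})^{-\ell}\big],
\end{align*}
and \eqref{Zeta3} is equivalent to $\lim_{x\downarrow a}\lim_{\varepsilon\to0}D_\varepsilon(x)=0$. The key structural fact is that $K_\varepsilon:=T_\varepsilon^{-1}-(T_\varepsilon^{x})^{-1}$ is rank one:
\[
K_\varepsilon=\frac{G_\varepsilon(\cdot,x)\,\langle G_\varepsilon(\cdot,x),\cdot\,\rangle}{G_\varepsilon(x,x)},
\]
where $G_\varepsilon$ is the Green's function of $T_\varepsilon$ at $z=0$. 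Telescoping $A^\ell-B^\ell=\sum_{k}A^{k}(A-B)B^{\ell-1-k}$ gives
\[
|D_\varepsilon(x)|\le \ell\,\|K_\varepsilon\|_{1}\max\{\|T_\varepsilon^{-1}\|,\|(T^x_\varepsilon)^{-1}\|\}^{\ell-1},
\]
so each $D_\varepsilon(x)$ is finite even when the individual $\zeta$-values diverge.

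Second, I pass to $\varepsilon\to0$. By Lemma \ref{LemmaEpsilon}, the Dirichlet solutions at $a+\varepsilon$ converge to the principal solution $u_a=u_a(0,\cdot)$. Hence $G_\varepsilon\to G_0$, where
\[
G_0(t,s)=u_a(t_<)\,\psi_c(t_>)/W(\psi_c,u_a),
\]
with $\psi_c$ the solution vanishing at $c$. The operator norms stay uniformly bounded, since the lowest eigenvalues converge monotonically. One then checks $\|K_\varepsilon-K_0\|_1\to0$ from the explicit kernel, so $\lim_{\varepsilon\to0}D_\varepsilon(x)=D_0(x)$ and $|D_0(x)|\le C\,\|K_0\|_1$ with $C$ independent of $x\in(a,c')$. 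As a cross-check, $D_0$ should also be readable from the generating function of Proposition \ref{Prop:ExactZeta}: it is the $\ell$-th Taylor coefficient at $z=0$ of
\[
-\tfrac{d}{dz}\ln\big[u_a(z,x)^2\textstyle\int_x^c \frac{dt}{p(t)u_a(z,t)^2}\big],
\]
since $W(\psi_c,\varphi_a)/(\psi_c\varphi_a)(z,x)$ is the ratio of the three characteristic functions.

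The main obstacle is showing that
\[
\|K_0\|_1=\frac{\int_a^c G_0(t,x)^2r(t)\,dt}{G_0(x,x)}=\frac{1}{W}\Big[\frac{\psi_c(x)}{u_a(x)}\int_a^x u_a^2r\,dt+\frac{u_a(x)}{\psi_c(x)}\int_x^c\psi_c^2 r\,dt\Big]
\]
tends to $0$ as $x\downarrow a$. For the first term, I would use that $\psi_c/u_a$ is monotone (its derivative is $-W/(pu_a^2)$) and that $u_a\in L^2((a,c);r\,dx)$ with $\psi_c(x)/u_a(x)\to\infty$. What must be shown is that $\int_a^x u_a^2r$ decays faster than $u_a/\psi_c$, which I would try to extract from $\int_a^x dt/(pu_a^2)=\infty$ together with a Cauchy--Schwarz or Hardy-type inequality. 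For the second term, the monotonicity gives the integrand bound $\frac{u_a(x)}{\psi_c(x)}\psi_c^2\le u_a\psi_c$ for $t>x$, with a pointwise vanishing factor. However, $u_a\psi_c r=G_0(t,t)r$ is \emph{not} integrable at $a$ when $\cona\ge1$, since $\int_a^c G_0(t,t)r\,dt=\operatorname{Tr}T_0^{-1}=\infty$. Plain dominated convergence therefore fails. Here I would instead use the Hilbert--Schmidt-type finiteness of $\int\!\!\int G_0^2$ on suitable subregions, or else fall back on the generating-function representation above and show directly that
\[
u_a(z,x)^2\int_x^c \frac{dt}{p\,u_a(z,t)^2}\Big/\Big[u_a(0,x)^2\int_x^c \frac{dt}{p\,u_a(0,t)^2}\Big]\to1
\]
locally uniformly near $z=0$. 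That in turn needs $u_a(z,t)/u_a(0,t)$ to vary slowly relative to the divergent measure $dt/(pu_a(0,t)^2)$ near $a$, which I expect to be the delicate point.
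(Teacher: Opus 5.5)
\textbf{There is a genuine gap at the central step.} Your set-up is correct and matches the paper's key structural observation: $T_{(a+\varepsilon,x)}\oplus T_{(x,c)}$ is a rank-one resolvent perturbation of $T_{(a+\varepsilon,c)}$. Your telescoping bound also does reduce every $\ell$ to showing $\|K_0\|_1\to0$. But this reduction gains nothing. For $\ell=1$ the bracket in \eqref{Zeta3} is exactly $-\operatorname{Tr}K_0$, which by your own kernel formula is $-\|K_0\|_1$. So the step you leave open is the $\ell=1$ case of the proposition itself, and none of your suggested routes closes it:
\begin{itemize}
\item The two terms in $W\|K_0\|_1$ are precisely the limits \eqref{EqInt2} and \eqref{EqInt1} with $v_a=\psi_c$. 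They vanish only because $\sigma(T_{(a,c)})$ is discrete. In the free example following Lemma \ref{LemmaInt2}, $u$ is principal, $\int dt/(pu^2)=\infty$, and $\psi/u$ increases to $\infty$, yet both limits equal $1/(4\,\dist(\lambda,\sigma_{ess}))\neq0$. Hence no Cauchy--Schwarz or Hardy argument built only from those ODE facts can succeed.
\item The Hilbert--Schmidt idea is unavailable once $\cona\geq2$.
\item The generating-function fallback contains the same obstacle. By the resolvent identity, $G_z(x,x)/G_0(x,x)-1=z\int_a^c G_z(x,t)G_0(t,x)r(t)\,dt/G_0(x,x)$, whose linear term is $z\operatorname{Tr}K_0$. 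Extracting it instead from the $z$-dependence of $u_a(z,\cdot)$ is circular, since the principal asymptotics \eqref{asymP} are deduced in the paper from this very proposition.
\end{itemize}
As a minor point, the correct generating function is $+\frac{d}{dz}\ln G_z(x,x)$, not $-$.

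\textbf{The missing idea} is to use the rank-one property spectrally rather than through trace norms, and to work at $\varepsilon>0$, where all sums converge. Let $\lambda_j(\varepsilon)$ and $\widetilde\lambda_j(\varepsilon,x)$ denote the eigenvalues of $T_\varepsilon$ and $T_\varepsilon^x$.
\begin{itemize}
\item Their counting functions differ by at most one, so they interlace: $\lambda_{j-1}(\varepsilon)\leq\widetilde\lambda_j(\varepsilon,x)\leq\lambda_{j+1}(\varepsilon)$.
\item Telescoping against these inequalities traps the tail $\sum_{j>N}\big[\widetilde\lambda_j^{-\ell}(\varepsilon,x)-\lambda_j^{-\ell}(\varepsilon)\big]$ in $\big[-\lambda_{N+1}^{-\ell}(\varepsilon),\lambda_N^{-\ell}(\varepsilon)\big]$, for every $\ell$ and uniformly in $x$.
\item The first $N$ terms pass to $\varepsilon\to0$ directly. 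They then vanish as $x\downarrow a$: the spectrum of $T_{(a,x)}$ escapes to $+\infty$ while $\lambda_j(x,c)\to\lambda_j(a,c)$, so $\widetilde\lambda_j(0,x)\to\lambda_j(0)$.
\item Letting $N\to\infty$ is where discreteness of $\sigma(T_{(a,c)})$ enters.
\end{itemize}
This gives \eqref{Zeta3} for all $\ell$ at once, with no estimates on $u_a$ or $\psi_c$ and no need for $\|K_\varepsilon-K_0\|_1\to0$.

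Alternatively, your route would close by invoking Lemmas \ref{LemmaInt1} and \ref{LemmaInt2}. However, those rest on the same kind of spectral comparison, and the second is only asserted, not proved, in the paper.
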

\begin{proof}
    Let $T_{(x,c)}$ denote the Friedrich realization of $\tau|_{(x,c)}$ and similarly for the other intervals. Consider $\widetilde T_{(a+\varepsilon, c)} = T_{(a+\varepsilon, x)} \oplus T_{(x, c)}$ (here we allow $\varepsilon = 0$). Observe that $\sigma(\widetilde T_{(a+\varepsilon, c)}) = \sigma(T_{(a+\varepsilon, x)}) \sqcup \sigma(T_{(x, c)})$. In fact, typical  eigenfunctions $\widetilde \psi_n(x)$ of $\widetilde T_{(a+\varepsilon, c)}$ are of the form
    \begin{align}
        \psi_n(x) = \begin{cases}
            \psi_{(a+\varepsilon, x),j}(x), & x \in (a+\varepsilon, x),
            \\
            0, & x \in (x, c),
        \end{cases} \qquad \text{or} \qquad \psi_n(x) = 
        \begin{cases}
            0, & x \in (a+\varepsilon, x),
            \\
        \psi_{(x, c),k}, & x \in (x, c),
        \end{cases}
    \end{align}
    where $\psi_{(a+\varepsilon, x),j}(x)$ resp.~$\psi_{(x, c),k}(x)$ are eigenfunctions of $T_{(a+\varepsilon, x)}$ resp.~$T_{(x, c)}$ (combination of both cases can happen in case of eigenvalues of multiplicity two).  
    
    We will now deviate a little from our usual convention and denote by $\widetilde \lambda_j(\varepsilon, x)$ the $j^{th}$ eigenvalue of $\widetilde T_{(a+\varepsilon, c)}$ and by $\lambda_j(\varepsilon)$ the $j^{th}$ eigenvalue of $T|_{(a+\varepsilon, c)}$. From $\sigma(\widetilde T_{(a+\varepsilon, c)}) = \sigma(T_{(a+\varepsilon, x)}) \sqcup \sigma(T_{(x, c)})$ it follows that
    \begin{align}
        \zeta(\ell; (a+\varepsilon, x)) + \zeta(\ell; (x,c)) = \sum_{j \geq 1} \widetilde \lambda_j^{-\ell}(\varepsilon, x).
    \end{align}
    With this in mind we can write
    \begin{align}
        \zeta(\ell; (a+\varepsilon, x)) + \zeta(\ell; (x,c)) - \zeta(\ell; (a+\varepsilon, c)) &= \sum_{j = 1}^N \Big[\widetilde \lambda_j^{-\ell}(\varepsilon, x) - \lambda_j^{-\ell}(\varepsilon) \Big] + \sum_{j = N+1}^\infty \Big[\widetilde \lambda_j^{-\ell}(\varepsilon, x) -\lambda_j^{-\ell}(\varepsilon)\Big]\notag
        \\
        &= K(\varepsilon, x) + L(\varepsilon, x).
    \end{align}
    Clearly, for $K$ there is no issue with taking the limit: $\lim_{\varepsilon \to 0} K(\varepsilon, x) = K(x) = \sum_{j=1}^N \big[ \widetilde \lambda_j^{-\ell}(0, x) - \lambda_j^{-\ell}(0) \big]$. Moreover, as $x \downarrow a$, all eigenvalues of $T_{(a,x)}$ diverge to $\infty$, while the $j^{th}$ eigenvalue of $T_{(x, c)}$ converges to the $j^{th}$ eigenvalue of $T_{(a,c)}$. It follows that for each $j$ we have $\lim_{x\downarrow a}\widetilde \lambda_j(0, x) = \lambda_j(0)$. In particular, $\lim_{x\downarrow a} K(x) = 0$. 

To show that $\lim_{x\downarrow a} \big[\lim_{\varepsilon \to 0} L(\varepsilon, x)\big] = 0$ (recall that we already know that the $\varepsilon$-limit exists), we will rely on the fact that $\widetilde T_{(a+\varepsilon, c)}$ differs from $T_{(a+\varepsilon, c)}$ by a rank one resolvent perturbation. Denote by $N_{(a+\varepsilon, c)}(\lambda) = \# \lbrace \lambda_j(\varepsilon) \leq \lambda \colon \lambda_j(\varepsilon) \in \sigma(T_{(a+\varepsilon, c)}) \rbrace$ (resp.~$\widetilde N_{(a+\varepsilon, c)}(\lambda) = ...)$ the eigenvalue counting function (counting multiplicities) for $T_{(a+\varepsilon, c)}$ (resp.~$\widetilde T_{(a+\varepsilon, c)}$). Then we have $|N_{(a+\varepsilon, c)}(\lambda) - \widetilde N_{(a+\varepsilon, c)}(\lambda)| \leq 1$, see \cite[Cor.~II.2.1]{GK69}. This means that
\begin{align}
    \widetilde \lambda_{j+1}(\varepsilon, x)\geq \lambda_j(\varepsilon) \geq \widetilde \lambda_{j-1}(\varepsilon, x),  \qquad \lambda_{j+1}(\varepsilon)\geq \widetilde \lambda_j(\varepsilon, x) \geq \lambda_{j-1}(\varepsilon).
\end{align}
Assuming $N$ is chosen large enough such that all eigenvalues are positive, it follows that 
\begin{align}
    \sum_{j=N+1}^\infty \underbrace{\widetilde \lambda_j^{-\ell}(\varepsilon, x) - \lambda_{j-1}^{-\ell}(\varepsilon)}_{\leq 0} = L(\varepsilon, x) - \lambda_N^{-\ell}(\varepsilon),
    \end{align}
    and 
    \begin{align}
        \sum_{j=N+1}^\infty \underbrace{\widetilde \lambda_j^{-\ell}(\varepsilon, x) - \lambda_{j+1}^{-\ell}(\varepsilon)}_{\geq 0} = L(\varepsilon, x) + \lambda_{N+1}^{-\ell}(\varepsilon).
   \end{align}
   Thus, $L(\varepsilon, x) \in [-\lambda_{N+1}^{-\ell}(\varepsilon), \lambda_N^{-\ell}(\varepsilon)]$, and in particular $\lim_{\varepsilon \to 0} L(\varepsilon, x) \in [-\lambda_{N+1}^{-\ell}(0), \lambda_N^{-\ell}(0)]$. But $\lambda_j(0)$ are just the eigenvalues of $T|_{(a,c)}$ which diverge to $\infty$ as $j \to \infty$. As $N$ can be arbitrary large, it follows that the limit in \eqref{Zeta3} must vanish, finishing the proof.   
\end{proof}
In particular, the above results show that \eqref{asymP} holds. In fact, we can even show that, unlike nonprincipal solutions, the order of entire principal solutions is strongly related to the behavior of the limit \eqref{NormPhi}:
\begin{corollary}\label{CorPhiTilde}
    Assume that $\varphi_a(z,x)$ is an arbitrary entire solution which is principal at $x = a$. Then $\varphi_a(z,x)$ is of minimal order $\kappa_a$ if and only if
    \begin{align}\label{PhiLimit}
        \lim_{x \downarrow a} \frac{\varphi_a(z,x)}{\varphi_a(0,x)}\exp{\Big\lbrace-\sum_{\ell=1}^{\cona} \frac{z^\ell}{\ell} \zeta(\ell; (x,c))\Big\rbrace} = e^{g_{\lfloor \kappa_a \rfloor}(z)}, 
    \end{align}
    for some polynomial $g_{\lfloor \kappa_a \rfloor}(z)$ of degree at most $\lfloor \kappa_a \rfloor$. 
\end{corollary}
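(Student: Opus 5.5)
We compare an arbitrary entire principal solution with the solution $\varphi_a(z,x)$ constructed in \eqref{ConstPhi}, which we first show is of minimal order $\kappa_a$. Since principal solutions at $x=a$ are unique up to a $z$-dependent nonzero factor, any entire principal solution has the form $\widehat\varphi_a(z,x)=h(z)\varphi_a(z,x)$. Here $h$ is entire and zero-free: for fixed $x$ near $a$ the zeros of $\varphi_a(\cdot,x)$ are the Friedrichs eigenvalues $\lambda_j(a,x)$, and $\widehat\varphi_a$ must vanish there as well. Hence $h=e^{f}$ with $f$ entire.

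\textbf{Step 1: minimal order of $\varphi_a$.} From the last line of \eqref{ConstPhi},
\[
\varphi_a(z,x)=\varphi_a(0,x)\, e^{P_x(z)}\prod_{j\in\bbN}E\big(z/\lambda_j(a,x),\cona\big),
\]
where $P_x$ is a polynomial of degree at most $\cona\le\lfloor\kappa_a\rfloor$. The canonical product has order equal to the exponent of convergence of $\{\lambda_j(a,x)\}$, which is $\kappa_a$. This holds because it is the spectrum of a realization of $\tau|_{(a,x)}$, and $\kappa_a$ is independent of the truncation point. The factor $e^{P_x}$ has order at most $\lfloor\kappa_a\rfloor\le\kappa_a$. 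Hence $\varphi_a(\cdot,x)$ has order exactly $\kappa_a$, since it cannot be lower than the exponent of convergence of its zeros.

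\textbf{Step 2: the limit \eqref{PhiLimit} for $\varphi_a$.} By \eqref{NormPhi} together with Proposition \ref{Prop:RankOne}, which lets us replace $\widetilde\zeta(\ell;(x,c))$ by $\zeta(\ell;(x,c))$ at the cost of an $o(1)$ term in the exponent, the limit \eqref{PhiLimit} equals $1$ for $\varphi_a$.

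\textbf{Step 3: reduction to $f$.} For $\widehat\varphi_a=e^{f}\varphi_a$, the expression in \eqref{PhiLimit} tends to $e^{f(z)-f(0)}$. Thus \eqref{PhiLimit} holds with $g_{\lfloor\kappa_a\rfloor}$ a polynomial of degree at most $\lfloor\kappa_a\rfloor$ if and only if $f$ is such a polynomial, modulo an additive constant. If $f$ is such a polynomial, Step 1 gives that $\widehat\varphi_a$ has order at most $\max\{\kappa_a,\deg f\}=\kappa_a$, hence minimal order.

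\textbf{Step 4: the converse, which is the main obstacle.} Suppose $\widehat\varphi_a(\cdot,x)=e^{f}\varphi_a(\cdot,x)$ has finite order $\kappa_a$ for some fixed $x$. We apply Hadamard factorization to both $\widehat\varphi_a(\cdot,x)$ and $\varphi_a(\cdot,x)$, which have the same zeros. Each equals the canonical product of genus $\cona$ times $e^{\text{polynomial}}$, with polynomial degree at most the order $\kappa_a$, hence at most $\lfloor\kappa_a\rfloor$. Dividing the two factorizations shows that $f$ is a polynomial of degree at most $\lfloor\kappa_a\rfloor$.

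The remaining subtlety is the case $\kappa_a\in\bbN$ with $\cona=\kappa_a-1$, in which a degree-$\kappa_a$ term can still appear. This is exactly why the statement allows degree $\lfloor\kappa_a\rfloor$ rather than $\cona$. We must therefore check that the Hadamard bound uses the order and not the genus, which is standard (\cite[Sect.~13.9]{Nev07}). Finally, since $f$ does not depend on $x$, the argument at a single $x$ suffices for all $x$.
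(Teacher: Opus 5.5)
Your proposal is correct and follows the paper's argument. You show that the $\varphi_a$ constructed in \eqref{ConstPhi} has minimal order and satisfies \eqref{PhiLimit} with limit $1$ (via \eqref{NormPhi} and Proposition \ref{Prop:RankOne}), write any other entire principal solution as $e^{f}\varphi_a$, and use Hadamard factorization to see that minimal order forces $f$ to be a polynomial of degree at most $\lfloor\kappa_a\rfloor$, which is exactly what \eqref{PhiLimit} encodes. One small correction: the fact that $\widehat\varphi_a$ vanishes at the zeros of $\varphi_a(\cdot,x)$ only shows that $h$ has no poles; that $h$ is zero-free comes instead from $\widehat\varphi_a(z,\cdot)$ being a nontrivial solution for every $z$.
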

\begin{proof}
It is apparent from the representation \eqref{ConstPhi} that the $\varphi_a$ constructed therein is of minimal order $\kappa_a$. Any other entire principal solution of minimal order must be of the form $\widetilde \varphi_a(z,x) = e^{\widetilde g_{\lfloor \kappa_a \rfloor}(z)} \varphi_a(z,x)$ for some polynomial $\widetilde g_{\lfloor \kappa_a \rfloor}$ of degree at most $\lfloor \kappa_a \rfloor$, and the claim immediately follows with $g_{\lfloor \kappa_a \rfloor}(z) = \widetilde g_{\lfloor \kappa_a \rfloor}(z) - \widetilde g_{\lfloor \kappa_a \rfloor}(0)$.
\end{proof}
We also have the analog of Corollary \ref{CorTheta}:
\begin{corollary}\label{CorVarphi}
    Assume that the self-adjoint realizations of $\tau|_{(a,c)}$ have resolvents in the Schatten $p$-class with $p = \cona+1$, but not $p = \cona$. Let $u_a(z_j,x)$ and $u_a(w_j, x)$, $1 \leq j \leq n$ denote the principal solutions at $x = a$. Then the limit 
    \begin{align}\label{uRatio}
        \lim_{x \downarrow a} \frac{u_a(z_1, x) \dots u_a(z_n, x)}{u_a(w_1, x) \dots u_a(w_n, x)} 
    \end{align}
    exists and is nonzero if and only if $\sum_{j =1}^n z_j^\ell = \sum_{j=1}^n w_j^\ell$ for $\ell = 1, \dots ,\cona$. If $\varphi_a(z,x)$ is constructed as in \eqref{ConstPhi}, we even have
    \begin{align} \label{PhiLimit2}
        \lim_{x \downarrow a} \frac{\varphi_a(z_1, x) \dots \varphi_a(z_n, x)}{\varphi_a(w_1, x) \dots \varphi_a(w_n, x)} = 1.
    \end{align}
\end{corollary}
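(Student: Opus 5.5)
The plan mirrors the proof of Corollary \ref{CorTheta}, with the role of \eqref{NormTheta} played by the normalization of the constructed principal solution. First I would establish \eqref{PhiLimit2} for $\varphi_a$ from \eqref{ConstPhi}. By \eqref{NormPhi} combined with Proposition \ref{Prop:RankOne}, we may replace $\widetilde\zeta$ by $\zeta$, which gives
\begin{align*}
\lim_{x\downarrow a}\frac{\varphi_a(z,x)}{\varphi_a(0,x)}\exp\Big\lbrace-\sum_{\ell=1}^{\cona}\frac{z^\ell}{\ell}\zeta(\ell;(x,c))\Big\rbrace=1,\qquad z\in\bbC.
\end{align*}
Here the difference $\zeta-\widetilde\zeta$ tends to $0$, so the exponential factors have ratio tending to $1$. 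Taking the product over $z_1,\dots,z_n$ and dividing by the product over $w_1,\dots,w_n$, the factors $\varphi_a(0,x)^n$ cancel. The exponentials combine into
\begin{align*}
\exp\Big\lbrace\sum_{\ell=1}^{\cona}\frac{1}{\ell}\Big(\sum_j z_j^\ell-\sum_j w_j^\ell\Big)\zeta(\ell;(x,c))\Big\rbrace,
\end{align*}
which equals $1$ under the power-sum hypothesis. This gives \eqref{PhiLimit2}.

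For an arbitrary principal solution, $u_a(z,x)=c(z)\varphi_a(z,x)$ with $c(z)\neq0$, since principal solutions are unique up to nonzero multiples. Sufficiency of the power-sum condition then follows: the limit \eqref{uRatio} equals $\prod_j c(z_j)/\prod_j c(w_j)\neq0$. For necessity, I would write the ratio as this constant times
\begin{align*}
\exp\Big\lbrace\sum_{\ell=1}^{\cona}\frac{\delta_\ell}{\ell}\zeta(\ell;(x,c))+o(1)\Big\rbrace,\qquad \delta_\ell=\sum_j z_j^\ell-\sum_j w_j^\ell,
\end{align*}
and argue that this cannot converge to a nonzero limit unless every $\delta_\ell$ vanishes. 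Suppose $m$ is the smallest index with $\delta_\ell\neq0$, or better the index whose $\zeta$-term dominates. If
\begin{align*}
\zeta(\ell+1;(x,c))=o\big(\zeta(\ell;(x,c))\big)\quad\text{as } x\downarrow a,\qquad \ell<\cona,
\end{align*}
then the dominant nonvanishing term is $\ell=m$ when $m$ is the smallest such index. Its coefficient $\delta_m/m$ is a nonzero complex number multiplying a positive divergent quantity, so the modulus or the argument of the exponent diverges, and the limit fails to exist or vanishes.

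The main obstacle is justifying the domination $\zeta(\ell+1;(x,c))=o(\zeta(\ell;(x,c)))$, which the paper invokes only informally in Corollary \ref{CorTheta}. I would try the following. For $x$ near $a$, all $\lambda_j(x,c)\geq\lambda_1(x,c)\to\infty$, since the interval shrinks toward the singular end and $\lambda_1(x,c)\geq\lambda_1(a,c)$ increases. Hence
\begin{align*}
\zeta(\ell+1;(x,c))\leq\lambda_1(x,c)^{-1}\zeta(\ell;(x,c)).
\end{align*}
Care is needed here: $\lambda_1(x,c)$ converges to $\lambda_1(a,c)$ rather than diverging as $x\downarrow a$. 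So I would instead split off the first $N$ eigenvalues, which converge to finite values and give bounded contributions, and bound the tail by $\lambda_N(a,c)^{-1}\zeta(\ell)$. Letting $N\to\infty$ gives the little-$o$ relation, because $\zeta(\ell)\to\infty$. With this in hand the necessity argument goes through.

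Finally, if several $\delta_\ell$ are nonzero, real parts and imaginary parts of the exponent are handled separately. A nonzero $\delta_m$ forces either the real part to diverge, making the modulus tend to $0$ or $\infty$, or the imaginary part to diverge, so that the argument rotates unboundedly. In both cases no nonzero limit exists.
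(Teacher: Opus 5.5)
Your proposal is correct and follows essentially the same route as the paper, which simply defers to the proof of Corollary \ref{CorTheta}. In both, \eqref{PhiLimit2} comes from \eqref{NormPhi} together with Proposition \ref{Prop:RankOne}, existence of the limit for arbitrary principal solutions comes from uniqueness up to nonzero multiples, and necessity comes from the hierarchy $\zeta(\ell+1;(x,c))=o(\zeta(\ell;(x,c)))$, which forces exact cancellation in the exponent. Your splitting argument based on $\lambda_j(x,c)\geq\lambda_j(a,c)$ makes rigorous the domination step that the paper only asserts. One small point: in the purely rotating case you also need continuity of $x\mapsto\zeta(\ell;(x,c))$, since $e^{E(x)}\to L\neq0$ with $E$ continuous forces $E(x)$ itself to converge, which contradicts $|E(x)|\to\infty$.
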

\begin{proof}
The proof follows the same lines as the proof of Corollary \ref{CorTheta}. 
\end{proof}
We finish this section with the following summary of conditions equivalent to $\varphi_a$ being of minimal order.
\begin{corollary}\label{Coriii}
    Let $\varphi_a(z,x)$ be an entire solution principal at $x = a$. Then the following are equivalent:\\[1mm]
        $(i)$ $\varphi_a(z,x)$ is of minimal order $\kappa_a$ in $z$;\\[1mm]
        $(ii)$ $\varphi_a(z,x)$ satisfies \eqref{PhiLimit};\\[1mm]
        $(iii)$ $\varphi_a(z,x)$ satisfies $\lim_{x \downarrow a} \frac{\varphi_a(z_1, x) \dots \varphi_a(z_n, x)}{\varphi_a(w_1, x) \dots \varphi_a(w_n, x)} = 1$, whenever $\sum_{j =1}^n z_j^\ell = \sum_{j=1}^n w_j^\ell$ for $\ell = 1, \dots , \lfloor \kappa_a \rfloor$.
\end{corollary}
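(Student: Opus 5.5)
The plan is to prove the cycle $(i)\Rightarrow(ii)\Rightarrow(iii)\Rightarrow(i)$. The key tool throughout is the canonical solution $\varphi_a^{0}(z,x)$ constructed in \eqref{ConstPhi}. This solution is of minimal order $\kappa_a$, by the explicit product representation with Weierstrass factors of genus $\cona$. By \eqref{NormPhi} together with Proposition \ref{Prop:RankOne}, it satisfies \eqref{PhiLimit} with $g\equiv 0$.

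We first reduce the problem. Any entire solution $\varphi_a$ that is principal at $x=a$ for every $z$ must have the form
\begin{align*}
\varphi_a(z,x)=e^{f(z)}\varphi_a^{0}(z,x)
\end{align*}
for some entire function $f$. Indeed, for each $z$ the principal solution is unique up to a scalar, so $\varphi_a(z,x)/\varphi_a^{0}(z,x)$ is independent of $x$. Choosing an $x$ at which $\varphi_a^{0}(\cdot,x)$ does not vanish near a given $z$, we see this quotient is holomorphic. It is also nonvanishing, since $\varphi_a(z,\cdot)\not\equiv 0$ and the zeros in $z$ vary with $x$. Hence the quotient equals $e^{f(z)}$.

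Each of the three conditions then becomes a condition on $f$ alone.

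\textbf{Condition $(ii)$.} Using \eqref{PhiLimit} for $\varphi_a^0$, the limit in \eqref{PhiLimit} for $\varphi_a$ equals $e^{f(z)-f(0)}$. So $(ii)$ holds if and only if $f$ is a polynomial of degree at most $\lfloor\kappa_a\rfloor$, modulo constants and $2\pi i\mathbb Z$, which are irrelevant.

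\textbf{Condition $(i)$.} Since $\varphi_a^0(\cdot,x)$ has order exactly $\kappa_a$ and its zeros have exponent of convergence $\kappa_a$, the Hadamard factorization shows that $e^{f}\varphi_a^0$ has finite order if and only if $f$ is a polynomial. Its order is then $\max\{\deg f,\kappa_a\}$. This equals $\kappa_a$ precisely when $\deg f\le\lfloor\kappa_a\rfloor$. This is the same argument as in the proof of Corollary \ref{CorPhiTilde}, so $(i)\Leftrightarrow(ii)$ is essentially that corollary.

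\textbf{Condition $(iii)$.} By \eqref{PhiLimit2} for $\varphi_a^0$, the product ratio in $(iii)$ for $\varphi_a$ tends to
\begin{align*}
\exp\Big\{\sum_j f(z_j)-\sum_j f(w_j)\Big\}
\end{align*}
whenever the power sums agree up to order $\cona$. Two points need care here, and this is where I expect the main obstacle: $(iii)$ is stated with $\lfloor\kappa_a\rfloor$ rather than $\cona$, and these may differ, with $\cona=\kappa_a-1$ when $\kappa_a\in\mathbb N$.

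When $\lfloor\kappa_a\rfloor=\cona$, the argument is as follows. The direction $(ii)\Rightarrow(iii)$ is immediate, since for a polynomial $f$ of degree at most $\cona$ the exponent above vanishes. For $(iii)\Rightarrow(ii)$, I would run the argument from the proof of Corollary \ref{CorTildeTheta}. Write $f=f_{\le \cona}+f_m z^m+O(z^{m+1})$ with $m\ge\cona+1$ and $f_m\neq 0$, and scale $z_j\mapsto\varepsilon z_j$, $w_j\mapsto \varepsilon w_j$. Now use the test configuration $z_j=e^{2\pi i j/m}$, $w_j=0$, whose power sums agree up to order $m-1$ but not at order $m$. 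This forces $\sum_j[f(\varepsilon z_j)-f(\varepsilon w_j)]\in 2\pi i\mathbb Z$, and by continuity in $\varepsilon$ this quantity is in fact $0$, a contradiction. Hence $f$ is a polynomial of degree at most $\cona$.

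When $\kappa_a=\cona+1\in\mathbb N$, the hypothesis in $(iii)$ only imposes matching power sums up to order $\kappa_a$. This is a stronger requirement on the tuples, so the ratio for $\varphi_a^0$ still tends to $1$. Then $(iii)$ for $\varphi_a$ amounts to $\exp\{\sum_j f(z_j)-\sum_j f(w_j)\}=1$ on such tuples. The same scaling argument shows that $\deg f\le\kappa_a=\lfloor\kappa_a\rfloor$, which is exactly $(ii)$ and $(i)$. Conversely, if $\deg f\le\lfloor\kappa_a\rfloor$, the exponent vanishes on these tuples.

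Hence in both cases all three conditions are equivalent to $f$ being a polynomial of degree at most $\lfloor\kappa_a\rfloor$.
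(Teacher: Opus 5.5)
Your proposal is correct and follows essentially the paper's route. Writing $\varphi_a=e^{f}\varphi_a^{0}$ with $\varphi_a^{0}$ from \eqref{ConstPhi}, you get $(i)\Leftrightarrow(ii)$ from the Hadamard-factorization argument of Corollary~\ref{CorPhiTilde} and $(ii)\Leftrightarrow(iii)$ from the scaling and roots-of-unity argument of Corollary~\ref{CorTildeTheta}, which is exactly what the paper cites. Your separate treatment of the case $\cona=\kappa_a-1$ and your removal of the $2\pi i\mathbb Z$ ambiguity by continuity in $\varepsilon$ just fill in details the paper leaves implicit.
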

\begin{proof}
The equivalence of $(i)$ and $(ii)$ is the content of Corollary \ref{CorPhiTilde}. The equivalence of $(ii)$ and $(iii)$ is shown using the same arguments as in the proof of Corollary \ref{CorTildeTheta}. 
\end{proof}
\begin{remark}\label{RemarkPhiKappa}
Note that the requirement $\sum_{j =1}^n z_j^\ell = \sum_{j=1}^n w_j^\ell$ for $\ell = 1, \dots , \lfloor \kappa_a \rfloor$ in item $(iii)$ is formally distinct from the requirement found in Corollary \ref{CorVarphi}, where we only have $\ell = 1, \dots, \cona$ (recall that $\cona \in \lbrace \lfloor \kappa_a \rfloor - 1, \lfloor \kappa_a \rfloor \rbrace$). The discrepancy in the case of $\cona = \lfloor \kappa_a \rfloor - 1$ (which can only happen if $\kappa_a$ is an integer, i.e, $\lfloor \kappa_a \rfloor = \kappa_a$) comes from the different motivations for the two statements:
\begin{itemize}
    \item On one hand, $\lim_{x \downarrow a} \zeta(\lfloor \kappa_a \rfloor; (x,c)) < \infty$, implying that $\zeta(\lfloor \kappa_a \rfloor; (x,c))$ does \emph{not have to} be included in the regularizing sum \eqref{PhiLimit}, so we do not include it.
    \item On the other hand, as the order of $\varphi_a$ is $\kappa_a = \lfloor \kappa_a \rfloor$, multiplying it with $e^{z^{\lfloor \kappa_a \rfloor}}$ will not destroy the minimal order, so we have to include this possibility to obtain an if and only if statement.
\end{itemize}
 To the best of our knowledge, the case $\cona = \lfloor \kappa_a \rfloor - 1$ does not appear in any explicit examples.
\end{remark}
\subsection{Construction of the characteristic function of minimal order}\label{Sect:Char}

To finalize the construction of a characteristic function of minimal order, we will need the following lemma:
\begin{lemma}\label{Lemma1-3}
    Let $F_{\e}$ be a family of entire functions of order $< k \in \mathbb N$, such that $F_\e(z)$ is continuous with respect to the parameter $\e \in (0, 1)$ for any fixed $z \in \C$. Furthermore, assume that \\[1mm]
        $(i)$ The pointwise limit $F(z) = \lim_{\e \to 0} F_\e(z)$ exists and is an entire function;\\[1mm]
        $(ii)$ The zeros $z_j^{(\e)}$ of $F_\e(z)$ converge to the zeros of $F(z)$, that is, $z_j^{(\e)} \to z_j$ as $\e \to 0$ for an appropriate enumeration;\\[1mm]
        $(iii)$ There exists a constant $C > 0$ such that for any choice of $\e_j \in (0,1)$ we have $\sum_{j \in \N} \frac{1}{|z_j^{(\e_j)}|^k} < C$.

    \noindent Then $F(z)$ is an entire function of order $\leq max \lbrace k-1, \kappa \rbrace$, where $\kappa$ is the exponent of convergence of the sequence $z_j$.
\end{lemma}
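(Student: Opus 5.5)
The plan is to use Hadamard factorization at level $k-1$ and control the exponential factor through a uniform normal-family argument. Since each $F_\e$ has order $<k$, Hadamard's theorem gives
\begin{align}
F_\e(z) = z^{m_\e} e^{P_\e(z)} \prod_{j} E\Big(\frac{z}{z_j^{(\e)}}, k-1\Big),
\end{align}
with $P_\e$ a polynomial of degree at most $k-1$. Here the canonical product at genus $k-1$ converges because $\sum_j |z_j^{(\e)}|^{-k}<\infty$ by $(iii)$. Any zero at the origin can be absorbed into the $z^{m_\e}$ factor. By $(ii)$ the multiplicity $m_\e$ equals $m$, the order of vanishing of $F$ at $0$, for $\e$ small, provided we adopt the enumeration convention that zeros converging to $0$ are handled separately. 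More simply, we divide by $z^m$ and a finite product over the finitely many zeros near $0$.

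First I will show that $\Pi_\e(z) := \prod_j E(z/z_j^{(\e)},k-1)$ converges locally uniformly to $\Pi(z) := \prod_j E(z/z_j,k-1)$. On $|z|\le R$, use the standard bound $|\ln E(u,k-1)| \le C|u|^k$ for $|u|\le 1/2$. Hypothesis $(iii)$, applied with the freedom to choose $\e_j$ per index, gives uniform summability. Concretely, taking $\e_j$ to maximize $|z_j^{(\e)}|^{-k}$ over $\e$ (or a near-sup) shows that $\sum_j \sup_{\e} |z_j^{(\e)}|^{-k} \le C$. This yields a Weierstrass M-test dominating the tails uniformly in $\e$, and $(ii)$ gives termwise convergence. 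The finitely many factors with $|z_j^{(\e)}|\le 2R$ are handled by $(ii)$ directly. Note also that $\sum_j |z_j|^{-k}\le C$ by Fatou, so $\Pi$ is a genus $\le k-1$ canonical product. Its order equals the exponent of convergence $\kappa$ of the $z_j$ (Borel's theorem), unless $\kappa\le k-1$ with the genus being larger, in which case the order is still $\le\max\{k-1,\kappa\}$.

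Next, on a small circle avoiding zeros of $F$ and $\Pi$, we have $e^{P_\e(z)} = F_\e(z)/(z^m\Pi_\e(z))$. The right side converges pointwise to $F(z)/(z^m\Pi(z))$, which is nonvanishing there. I expect this step to be the main obstacle: pointwise convergence of $F_\e$ must be upgraded so that the polynomials $P_\e$ converge. The idea is that $P_\e$ lies in the finite-dimensional space of polynomials of degree $\le k-1$. Pick $k$ distinct points $\zeta_1,\dots,\zeta_k$ that are not zeros of $F$. At these points, $e^{P_\e(\zeta_i)}$ converge to nonzero limits. Continuity in $\e$ lets one choose continuous branches of $\ln$, so that $P_\e(\zeta_i)$ converge modulo a single continuous choice of $2\pi i$ multiples; continuity in $\e$ prevents jumps. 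Lagrange interpolation then shows that $P_\e \to P$ coefficientwise, with $\deg P\le k-1$.

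Hence $F(z)=\lim F_\e(z) = z^m e^{P(z)}\Pi(z)$, and the order of $F$ is at most $\max\{\deg P, \text{ord}\,\Pi\} \le \max\{k-1,\kappa\}$.
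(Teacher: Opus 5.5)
Your proposal is correct and follows the paper's proof essentially step for step: both factor $F_\e$ and $F$ with genus-$(k-1)$ Weierstrass factors, use (iii) to get locally uniform convergence of the canonical products, and recover the degree-$\le k-1$ exponent from its values at $k$ points, with continuity in $\e$ fixing the branch of the logarithm and Vandermonde/Lagrange interpolation giving convergence. Your domination $\sum_j\sup_\e|z_j^{(\e)}|^{-k}\le C$ just makes the paper's one-line convergence claim explicit; the one step you share with the paper that stays loose is that continuity of $e^{P_\e(\zeta_i)}=F_\e(\zeta_i)/\Pi_\e(\zeta_i)$ in $\e$ tacitly needs $\Pi_\e(\zeta_i)$ to be continuous in $\e$, which is not given directly, but you can avoid branches altogether by interpolating $\mathrm{Re}\,P_\e=\ln|F_\e|-\ln|\Pi_\e|$ at finitely many generic points, which forces every coefficient except $\mathrm{Im}\,P_\e(0)$ to converge, and $e^{i\,\mathrm{Im}\,P_\e(0)}$ then converges because $e^{P_\e(w)}$ does at any single point $w$.
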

\begin{proof}
For simplicity we assume that $z_j^{(\e)}, z_j \not = 0$ for any $j \in \N$. Note that item $(iii)$ implies, by Fatou's Lemma, that $\sum_{j \in \N} \frac{1}{|z_j|^k} < C$. Hence, we can use the Weierstrass factorization theorem to write
\begin{align}
    F_\e(z) = e^{g_\e(z)} \prod_{j \in \N} E\Big(\frac{z}{z_j^{(\e)}},k-1\Big), \quad F(z) = e^{f(z)} \prod_{j \in \N} E\Big(\frac{z}{z_j},k-1\Big),
\end{align}
where $g_\varepsilon(z)$ is, for each $\varepsilon \in (0,1)$, a polynomial of degree at most $k-1$. We know that $f(z)$ is an entire function, but we need to show that it is in fact a polynomial of degree at most $k-1$. To this end, observe that item $(iii)$ implies that
\begin{align}
    \prod_{j \in \N} E\Big(\frac{z}{z_j^{(\e)}},k-1\Big) \to \prod_{j \in \N} E\Big(\frac{z}{z_j},k-1\Big), \quad \text{as} \ \  \e \to 0,
\end{align}
locally uniformly for $z \in \C$. Thus it follows that $e^{g_\e(z)} \to e^{f(z)}$ for $z \in \C \setminus \lbrace z_j \rbrace_{j \in \bbN}$. In particular, for infinitely many values of $z$, we have that $g_\e(z) \to f(z) \mod 2\pi i$ as $\e \to 0$. Furthermore, we can assume that $g_\e(z)$ is continuous with respect to $\e$ (recall that $F_\e(z)$ is continuous with respect to $\e$), as any discontinuity would only amount to a global jump by an integer multiple of $2 \pi i$. Take now $k$ distinct points $w_1, \dots w_{k} \in \C \setminus \lbrace z_j \rbrace_{j \in \bbN}$. It follows that there must be integers $n_{\ell} \in \Z$ such that $g_\e(w_\ell) \to f(w_\ell) + 2\pi i n_\ell$. But the values $g_\e(w_\ell)$, $\ell = 1, \dots, k$ already fully determine the polynomial $g_\e(z)$ (the corresponding Vandermonde matrix is invertible as the points $w_\ell$ are distinct). In particular $g_\e(z) \to g_0(z)$, where $g_0(z)$ is some polynomial of degree $k-1$, and necessarily $g_0(z) = f(z)$ up to some global multiple of $2 \pi n$ (in particular, $n_\ell = n$ are all the same). This finishes the proof.
\end{proof}
We now proceed with the construction of a characteristic function of minimal order. For simplicity, we assume that $0$ is not an eigenvalue of the Friedrichs extension corresponding to $\tau$, otherwise we just perform a spectral shift. Take an entire nonprincipal solution $\theta_b(z,x)$ satisfying the normalization \eqref{NormTheta} (for some fixed $c \in (a,b))$, and a principal entire solution $\varphi_a(z,x)$ satisfying any of the conditions in Corollary \ref{Coriii}. In particular, $\varphi_a(z,x)$ will be of minimal order $\kappa_a$ as a function of $z$, while $\theta_b(z,x)$ could, in theory, be even of infinite order, see Remark \ref{remarkTheta}. Because of \eqref{NormTheta} we then have
\begin{align}
    \lim_{x \uparrow b}  \frac{\varphi_a(z,x)}{\theta_b(z,x)} = \lim_{x \uparrow b} \frac{\varphi_a(z,x)}{\theta_b(0,x)} \exp{\Big\lbrace\sum_{\ell=1}^{\conb} \frac{z^\ell}{\ell} \zeta(\ell; (c,x))\Big\rbrace} = F_0(z).
\end{align}
We claim that $F_0(z)$ is a characteristic function of minimal order. In fact, due to the presence of a purely discrete spectrum, we know that $\displaystyle\lim_{x \uparrow b} \tfrac{\varphi_a(z,x)}{\theta_b(z,x)} = 0$ if and only if $z \in \sigma(T_0)$, where $T_0$ is the Friedrichs extension.  That $F_0(z)$ is of minimal order $\kappa$ follows from Lemma \ref{Lemma1-3} and the fact that 
\begin{equation}
\frac{\varphi_a(z,x)}{\theta_b(0,x)} \exp{\Big\lbrace\sum_{\ell=1}^{\conb} \frac{z^\ell}{\ell} \zeta(\ell; (c,x))\Big\rbrace}
\end{equation}
is of order $\rho = \max \lbrace \kappa_a, \conb \rbrace \leq \kappa$ for each fixed $x \in (a,b)$. We summarize these observations in the following theorem.
\begin{theorem}\label{TheoremChar}
Assume Hypothesis \ref{h3a} holds (in particular the exponent of convergence $\kappa$ is finite), $0\in\rho(T_0)$, and let $c_1,c_2\in(a,b)$. Choose entire solutions $\varphi_a(z,x)$ and $\theta_b(z,x)$ of $\tau f=z f$ which are principal at $x=a$ and nonprincipal at $x=b$, respectively, and satisfy the normalizations
\begin{align}
&\lim_{x \downarrow a} \frac{\varphi_a(z, x)}{\varphi_a(0, x)} \exp{\Big\lbrace-\sum_{\ell=1}^{\cona} \frac{z^\ell}{\ell} \zeta(\ell; (x,c_1))\Big\rbrace} = 1, \quad \lim_{x \uparrow b} \frac{\theta_b(z, x)}{\theta_b(0, x)} \exp{\Big\lbrace\sum_{\ell=1}^{\conb} \frac{z^\ell }{\ell} \zeta(\ell; (c_2,x))\Big\rbrace} = 1. \label{7.1}
\end{align}
Then a characteristic function for $T_0$ with minimal growth order, $\kappa$, is given by
\begin{equation}\label{7.3}
F_0(z)=\lim_{x\uparrow b}\frac{\varphi_a(z,x)}{\theta_b(z,x)},\quad z\in \bbC.
\end{equation}
Moreover, the logarithm of $F_0$ and its derivative, used in Theorem \ref{Thrm3.4}, can be expressed via the limits
\begin{align}\label{LogF}
\ln \, F_0(z) &= \lim_{x\uparrow b}\Big[\ln \, \varphi_a(z,x) - \ln \, \theta_b(0,x) + \sum_{\ell=1}^{\conb} \frac{z^{\ell}}{\ell} \zeta(\ell; (c,x))\Big],
\\\label{dLogF}
\frac{d}{dz}\ln \, F_0(z) &= \lim_{x\uparrow b}\Big[\frac{d}{dz}\ln \, \varphi_a(z,x) + \sum_{\ell=1}^{\conb} z^{\ell-1} \zeta(\ell; (c,x))\Big]. 
\end{align}
An analogous statement holds with the roles of the endpoints $x=a$ and $x=b$ interchanged.
\end{theorem}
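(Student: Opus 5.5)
The plan is to build on the discussion immediately preceding the statement, making each step rigorous in turn.

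\emph{Step 1: existence of the limit and its form.} By the second normalization in \eqref{7.1} (the one for $\theta_b$), for each fixed $z$ we may replace $\theta_b(z,x)$ by
\[
\theta_b(0,x)\exp\Big\lbrace-\sum_{\ell=1}^{\conb}\tfrac{z^\ell}{\ell}\zeta(\ell;(c_2,x))\Big\rbrace
\]
inside the limit. Hence
\[
F_0(z)=\lim_{x\uparrow b}F_x(z),\qquad F_x(z):=\frac{\varphi_a(z,x)}{\theta_b(0,x)}\exp\Big\lbrace\sum_{\ell=1}^{\conb}\tfrac{z^\ell}{\ell}\zeta(\ell;(c_2,x))\Big\rbrace .
\]
To see that this limit exists, write $\varphi_a(z,x)=A(z)\theta_b(z,x)+B(z)\varphi_b(z,x)$, with $\varphi_b$ an entire principal solution at $b$. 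Here $A(z)=W(\varphi_a,\varphi_b)/W(\theta_b,\varphi_b)$ is entire, because the Wronskian in the denominator is nonvanishing and entire. Since $\varphi_b=o(\theta_b)$ as $x\uparrow b$, we get $F_0=A$, and $F_0$ is entire. Its zeros are exactly the $z$ for which $\varphi_a(z,\cdot)$ is principal at $b$, namely $\sigma(T_0)$. For multiplicities, I would note that both $W(\varphi_a,\varphi_b)(z)$ and $A$ vanish to the orders given by the simple eigenvalues; this is standard for separated limit-point/Friedrichs problems. Changing $c_2$ (or $c$) alters $\zeta(\ell;(c,x))$ only by $O(1)$, as shown in Section~\ref{Sect::NonPrincipal}. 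So the choice of $c$ in \eqref{LogF} is immaterial up to the normalizing constant, and I would take $c=c_2$.

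\emph{Step 2: order.} Apply Lemma~\ref{Lemma1-3} to the family $F_\e:=F_{x(\e)}$ with $x(\e)\uparrow b$. Each $F_x$ has order at most $\max\{\kappa_a,\conb\}\le\kappa$: the factor $\varphi_a$ has order $\kappa_a$ by Corollary~\ref{Coriii}, and the exponential is a polynomial of degree $\conb$. The zeros of $F_x$ are the Dirichlet eigenvalues $\lambda_j(a,x)$, which decrease monotonically to $\lambda_j$ \cite{Kong96}. This gives (ii). It also gives (iii) with $k=\con+1$, since $|\lambda_j(a,x)|^{-k}\le|\lambda_j|^{-k}$ for $j$ large, and the finitely many remaining terms stay bounded because $0\notin\sigma(T_0)$ and we may restrict to $x$ near $b$. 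Condition (i) is Step 1. The lemma then yields order $\le\max\{\con,\kappa\}$. If $\con=\kappa$ this equals $\kappa$. Otherwise $\con=\lfloor\kappa\rfloor<\kappa$ with $\kappa\notin\bbN$, and the order is again $\kappa$. Together with $\rho\ge\kappa$ from \cite[Ch.~3.2, Thm.~2]{Levin1996}, we conclude $\rho=\kappa$. One subtlety: the lemma wants order $<k$ for the $F_\e$. When $\kappa_a=\kappa=k$ is an integer with $\con=\kappa-1$, I would instead take $k=\kappa+1$, which still gives order $\le\kappa$.

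\emph{Step 3: logarithmic formulas and the main obstacle.} Formula \eqref{LogF} follows by taking logarithms in Step 1, choosing branches continuously along a path avoiding $\sigma(T_0)$. For \eqref{dLogF}, the plan is to differentiate the locally uniform limit. Convergence $F_x\to F_0$ locally uniformly, not just pointwise, follows from the proof of Lemma~\ref{Lemma1-3}: the canonical products converge locally uniformly and the polynomial factors $e^{g_\e}$ converge. Differentiating (Weierstrass) then gives $F_x'/F_x\to F_0'/F_0$ away from zeros, and the $\theta_b(0,x)$ term drops out. I expect the main obstacle to be exactly this upgrade from pointwise to locally uniform convergence, together with continuity in the parameter needed by Lemma~\ref{Lemma1-3}; the latter holds since $\varphi_a(z,x)$ and $\zeta(\ell;(c,x))$ are continuous in $x$. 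The interchanged-endpoint statement follows by reflecting $x\mapsto -x$.
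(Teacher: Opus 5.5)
Your proposal is correct and follows the paper's own argument. You use the $\theta_b$-normalization in \eqref{7.1} to rewrite \eqref{7.3} as the limit of $F_x(z)=\frac{\varphi_a(z,x)}{\theta_b(0,x)}\exp\{\sum_{\ell=1}^{\conb}\frac{z^\ell}{\ell}\zeta(\ell;(c,x))\}$, identify the zero set with $\sigma(T_0)$, and apply Lemma~\ref{Lemma1-3} to this family, whose members have order $\max\{\kappa_a,\conb\}\le\kappa$. The details you add are sound and fill in steps the paper leaves implicit: the Wronskian argument that the limit exists and is entire, the check of the lemma's hypotheses via monotonicity of $\lambda_j(a,x)$, the choice $k=\kappa+1$ in the integer edge case, and deriving \eqref{dLogF} from locally uniform convergence.
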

\begin{remark}
We have shown in \cite[Thm.~3.3]{FPS25a} that the analytic continuation of the spectral $\zeta$-function to $\Re \, z \leq \kappa$ is intimately related to an asymptotic expansion of the form 
\begin{align}
    \ln\,F_0(z)= \sum_{j=0}^N  \sum_{k=0}^{M} d_{j,k} z^{\kappa-(j/m)}\ln^{k}z+o\big(z^{\kappa-(N/m)-\delta}\big),\quad z\to\infty,
\end{align}
in a sector away from the real line. Moreover, only the terms $d_{j,k} \not = 0$ such that $k \geq 1$ or $\kappa-(N/m) \not \in \mathbb Z \setminus \lbrace 0 \rbrace$ contribute to poles in the analytic continuation. It follows that the $z^\ell$-terms in \eqref{LogF} coming from the regularization via the partial $\zeta$-values, do not affect the pole structure. This suggests that the pole structure of the analytic continuation of the spectral $\zeta$-function is completely specified by the large $z$-behavior of $\varphi_a(z,x)$ as $x \uparrow b$.
\end{remark}

This theorem holds even when the resolvents of the truncated problems are trace class. In that case, the partial $\zeta$-values are not present, and we recover the normalization used for the definition of the regularization index in \cite{PS24}. Moreover, \eqref{7.1} can be understood as the extension of the typical normalization using generalized boundary values in the limit circle case to the limit point with resolvent in some Schatten $p$-class. The role that these normalizations play is to guarantee that the characteristic function constructed is of minimal growth order in the spectral parameter $z$.

In addition, we can replace the limit $1$ in the two limits \eqref{7.1} with $e^{g_{\lfloor\kappa\rfloor, j}(z)}$, $j = 1,2$, where $g_{\lfloor\kappa\rfloor, j}$ are polynomials of degree at most $\lfloor\kappa\rfloor$. This is not surprising, as entire functions of minimal order $\kappa$ are only unique up to such prefactors. Finally, $0\in\rho(T_0)$ can be replaced with some $z_1\in\rho(T_0)$ as previously described (see the sentence after \eqref{NormTheta}).

Note that if either endpoint (or both) is in the limit circle case, then principal solutions at these endpoints will satisfy Friedrichs boundary conditions, giving us the Friedrichs extension $T_0$. Other self-adjoint extensions can be treated along the lines described in Remark \ref{Rem:3.3}.

\section{Further applications}\label{Sect:Applications}

Let us now consider two more applications of the construction of Sections \ref{Sect::NonPrincipal} and \ref{Sect::Principal}. One is based on a comparison of \eqref{NormTheta} and \eqref{NormPhi} with the Liouville--Green expansion for Schr\"odinger operators in the exponentially decaying/growing region, while the other relates to the convergence rate of the Dirichlet eigenvalues of the truncated problem, that is, the rate of convergence of  $\lambda_j(a+\eps, c) \to \lambda_j(a,c)$ (or $\lambda(c, b - \eps) \to \lambda(c, b)$).

\subsection{Connections to Liouville--Green expansions}\label{Sect:LG}
In this section we will consider the classic setting of a Schr\"odinger operator $\tau = -\frac{d^2}{dx^2} + q$ on $(a, \infty)$ with a confining potential $q(x) \to \infty$ as $x \to \infty$. We set $b = \infty$, since otherwise the self-adjoint realizations would have Weyl asymptotics, that is, $\lambda_j \propto j^2$, and we would be in the trace class resolvent case. Note, however, that both $a = -\infty$ and $a \in \bbR$ are allowed. While we could also work with a more general three-coefficient Sturm--Liouville differential expression, we have choosen to consider a Schr\"odinger differential expression for the sake of simplicity. 

We will be interested in the classically `forbidden' region, where principal solutions typically decay exponentially, while nonprincipal solutions grow exponentially (in particular there are no oscillations). For a study of Liouville--Green asymptotics in the classically \emph{allowed} region and very rough decaying potentials, see the recent work \cite{LW25}.
\begin{hypothesis}[Liouville--Green asymptotics]\label{Hypo::LG}
    Assume that the Schr\"odinger differential expression $\tau = -\frac{d^2}{dx^2} + q$, with $x \in (a,\infty)$ and $q(x) \to \infty$ for $x \to \infty$ has resolvents in the Schatten $p$-class with $p = \coninf+1$, but not $p = \coninf$. Furthermore, assume that principal (nonprincipal) solutions $u_\infty(\lambda, x)$ ($v_\infty(\lambda, x)$) have the asymptotics
\begin{align}\label{LG_asymp}
        u_\infty(\lambda, x) \propto \frac{1}{q^{1/4}(x)} \exp\Big \lbrace\int_{c}^x - \sqrt{q(t) - \lambda} \, dt \Big\rbrace, \quad  \Bigg(v_\infty(\lambda, x) \propto \frac{1}{q^{1/4}(x)} \exp \Big \lbrace\int_{c}^x  \sqrt{q(t) - \lambda} \, dt  \Big \rbrace\Bigg)
    \end{align}
    for $x \to \infty$ and $\lambda \in \bbR$.
\end{hypothesis}
The choice of $c \in (a, \infty)$ is made such that $q(t) - \lambda \geq \varepsilon > 0$ for $t \geq c$, but is otherwise irrelevant to the asymptotics. We will later see that under Hypothesis \ref{Hypo::LG} the above asymptotic formulas will even hold if we replace $\lambda$ by $z \in \bbC$.
\begin{remark}
General conditions which guarantee that \eqref{LG_asymp} holds can be found in \cite[Ch.~6]{Ol97}, which involve certain continuity and differentiability assumptions on the potential $q$ (see e.g.~\cite[Ch.~6, Thm.~2.1]{Ol97}). As we work with rougher $L^1_{loc}$ potentials, we instead choose to include the Liouville--Green asymptotics (i.e., the leading term in the Liouville--Green expansion) simply as an assumption in Hypothesis \ref{Hypo::LG}. 
\end{remark}

Next, assuming that $|q(x)| > (1 + \eps)|\lambda|$ for some $\eps > 0$ and all $x > c$, we have the convergent power series expansion
\begin{align}
    \sqrt{q(x) - z} = \sqrt{q(x)} \Bigg(1 - \sum_{\ell = 1}^\infty \Big(\frac{z}{q(x)}\Big)^\ell \frac{(2\ell-3)!!}{2^\ell \ell!} \Bigg), \quad (-1)!! := 1.
\end{align}
Due to the local uniform convergence of the series in $x \in (c,\infty)$, we have therefore
\begin{align}
   \int_{c}^x  \sqrt{q(t) - z} \, dt &= \int_{c}^x  \sqrt{q(t)}  \, dt - \sum_{\ell=1}^\infty \frac{z^\ell}{\ell}\int_{c}^x q(t)^{\frac{1}{2}-\ell}\frac{(2\ell-3)!!}{2^\ell (\ell-1)!} \, dt
   \\
   &= \int_{c}^x  \sqrt{q(t)}  \, dt - \sum_{\ell=1}^{\coninf} \frac{z^\ell}{\ell}\int_{c}^x q(t)^{\frac{1}{2}-\ell}\frac{(2\ell-3)!!}{2^\ell (\ell-1)!} \, dt + O\Big(z^{\coninf+1} \int_c^x q(t)^{-\frac{1}{2}-\coninf} \, dt \Big). \notag
\end{align}
In particular, it now becomes apparent that 
\begin{align}
    \frac{v_\infty(\lambda,x)}{v_\infty(0, x)} &\propto \exp \Bigg\lbrace -\sum_{\ell=1}^{\coninf} \frac{\lambda^\ell}{\ell}\int_{c}^x q(t)^{\frac{1}{2}-\ell}\frac{(2\ell-3)!!}{2^\ell (\ell-1)!} \, dt + O\Big(\lambda^{\coninf+1} \int_c^x q(t)^{-\frac{1}{2}-\coninf} \, dt \Big) \Bigg \rbrace \notag
    \\
    &\propto \exp{\Big\lbrace-\sum_{\ell=1}^{\coninf} \frac{\lambda^\ell}{\ell} \zeta(\ell; (c',x))\Big\rbrace},
\end{align}
with the second ``$\propto$" following by \eqref{NormTheta}. By introducing the notation $\xi(\ell; (c,x)) = \frac{(2\ell-3)!!}{2^\ell (\ell-1)!}\int_{c}^x q(t)^{\frac{1}{2}-\ell} dt$, it follows, for any $\lambda \in \bbR$, that the limit
\begin{align}\label{LimLim}
    \lim_{x\to \infty}\Bigg[\sum_{\ell=1}^{\coninf} \frac{\lambda^\ell}{\ell}\Big\lbrace\xi(\ell; (c,x)) -  \zeta(\ell; (c',x))\Big\rbrace  + O\Big(\lambda^{\coninf+1} \int_c^x q(t)^{-\frac{1}{2}-\coninf} \, dt \Big)
    \Bigg] \in \mathbb R
\end{align}
must exist. Recall that $c$ has to be chosen close enough to $\infty$ such that $|q(x)| > (1 + \eps)|\lambda|$ for $x \in (c,\infty)$ and some $\eps > 0$, so that the infinite sum is guaranteed to converge. No requirements are necessary for $c'\in (a,\infty)$. 

In case $\int_c^\infty q(t)^{-\frac{1}{2} - \coninf} dt < \infty$, it is easy to see that the existence of the limit \eqref{LimLim} implies that $\xi(\ell; (c,x)) = \zeta(\ell; (c', x)) + C_\ell + o(1)$ as $x \to \infty$ for $\ell = 1, \dots,\coninf$. As we now argue, $\int_c^\infty q(t)^{-\frac{1}{2} - \coninf} dt = \infty$ would lead to a contradiction. Assume $\int_c^\infty q(t)^{-\frac{1}{2} - \coninf} dt = \infty$. In that case we have in \eqref{LimLim} that $O\Big(\lambda^{\coninf+1} \int_c^x q(t)^{-\frac{1}{2}-\coninf} \, dt \Big) = \lambda^{\coninf+1}\frac{(2\coninf-1)!!}{2^{\coninf+1} (\coninf+1)!}\int_c^x q(t)^{-\frac{1}{2}-\coninf} \, dt + lower \ order \ terms$ (just add one more term in the Taylor expansion; as $q(x) \to \infty$ for $x \to \infty$, the next remainder will be of smaller order). Then necessarily $|\xi(\ell; (c,x)) - \zeta(\ell; (c', x))| \gtrsim \int_c^x q(t)^{-\frac{1}{2} - \coninf} \, dt$ for at least one $\ell \in \lbrace 1, \dots,\coninf \rbrace$. By choosing $\lambda_1, \dots, \lambda_n$ such that $\sum_{j=1}^n \lambda_j^m = 0$ for all $m \leq \coninf$ except $m = \ell$, that is, $\sum_{j=1}^n \lambda_j^\ell \not =  0$, and summing \eqref{LimLim} with $\lambda = \lambda_j$, we arrive at
\begin{align}
    \lim_{x\to \infty}\Bigg[\Big(\sum_{j=1}^{n} \frac{\lambda_j^\ell}{\ell}\Big)\Big\lbrace\xi(\ell; (c,x)) -  \zeta(\ell; (c',x))\Big\rbrace  + O\Big(\max_j\lbrace |\lambda_j|^{\coninf+1}\rbrace\int_c^x q(t)^{-\frac{1}{2}-\coninf} \, dt \Big)
    \Bigg] \in \bbR.
\end{align}
By scaling $\lambda_j(\eps) = \eps\lambda_j$, we conclude that 
\begin{align}
\lim_{x\to \infty}\Bigg[\Big(\eps^\ell\sum_{j=1}^{n} \frac{\lambda_j^\ell}{\ell}\Big)\underbrace{\Big\lbrace\xi(\ell; (c,x)) -  \zeta(\ell; (c',x))\Big\rbrace}_{|\, \cdot \,| \, \gtrsim \, \int_c^x q(t)^{-\frac{1}{2}-\coninf}} +\eps^{\coninf+1}O\Big(\int_c^x q(t)^{-\frac{1}{2}-\coninf} \, dt \Big) 
    \Bigg] \in \bbR.
\end{align}
By letting $\varepsilon \to 0$ we infer that obtaining a finite limit is possible only if $\int_c^\infty q(t)^{-\frac{1}{2}-\coninf}$ remains finite as $x \to \infty$. We summarize these results in the following theorem:
\begin{theorem}\label{TheoremLG}
    Assume that the Schr\"odinger differential expression $\tau = -\frac{d^2}{dx^2} + q$, with $x \in (a,\infty)$ satisfies Hypothesis \ref{Hypo::LG}. Then for any choice of $c, c'\in (a,\infty)$, and $\ell \in \bbN_+$ we have that\\[1mm]
   $(i)$ $\lim_{x \to \infty} \int_c^x q(t)^{\frac{1}{2} - \ell} dt = \infty$ if and only if $\ell \in \lbrace 1, \dots,\coninf \rbrace$;\\[1mm]
       $(ii)$ $\lim_{x \to \infty} \Big[\frac{(2\ell-3)!!}{2^\ell (\ell-1)!}\int_{c}^x q(t)^{\frac{1}{2}-\ell} dt - \zeta(\ell; (c',x))\Big] \in \bbR$, for $\ell \in \lbrace 1, \dots,\coninf \rbrace$.
\end{theorem}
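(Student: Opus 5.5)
The plan is to compare two asymptotic descriptions of the nonprincipal solution at $\infty$. The first comes from Hypothesis \ref{Hypo::LG}. The second is the normalization \eqref{NormTheta} from Section \ref{Sect::NonPrincipal}, equivalently the ratio characterization in Corollary \ref{CorTheta}. Writing $\xi(\ell;(c,x)) = \frac{(2\ell-3)!!}{2^\ell(\ell-1)!}\int_c^x q(t)^{\frac12-\ell}dt$, we first take $c$ large so that $q(t)\ge(1+\varepsilon)|\lambda|$ on $(c,\infty)$. On that interval the binomial series for $\sqrt{q(t)-\lambda}$ converges uniformly, so we may integrate termwise. Hypothesis \ref{Hypo::LG} then yields
\[
\frac{v_\infty(\lambda,x)}{v_\infty(0,x)}\propto \exp\Big\{-\sum_{\ell\ge1}\frac{\lambda^\ell}{\ell}\xi(\ell;(c,x))\Big\}.
\]
The $q^{-1/4}$ prefactors cancel in this ratio. \eqref{NormTheta} gives the same ratio as $\exp\{-\sum_{\ell=1}^{\coninf}\frac{\lambda^\ell}{\ell}\zeta(\ell;(c',x))\}$, again up to a nonzero constant. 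Equating the two, we get for every small real $\lambda$ that
\[
\sum_{\ell=1}^{\coninf}\frac{\lambda^\ell}{\ell}\big[\xi(\ell;(c,x))-\zeta(\ell;(c',x))\big]+\sum_{\ell>\coninf}\frac{\lambda^\ell}{\ell}\xi(\ell;(c,x))
\]
has a finite limit as $x\to\infty$. Changing $c$ or $c'$ only alters $\xi$ and $\zeta$ by $O(1)$: for $\xi$ because $q$ is locally integrable and bounded away from $0$ near infinity, and for $\zeta$ by the remark after \eqref{thetaU}. So it suffices to work with one admissible pair.

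The key step is to decouple the individual powers of $\lambda$. Following the device in the proofs of Corollaries \ref{CorTheta} and \ref{CorTildeTheta}, we sum the relation over finitely many points $\lambda_1,\dots,\lambda_n$ (allowed to be complex, since the asymptotics extend to $z\in\mathbb C$ through the entire normalized $\theta_\infty$). For each target $\ell$ we choose these points so that their power sums $\sum_j\lambda_j^m$ vanish for all $m\le \coninf+1$ except $m=\ell$. Then we rescale $\lambda_j\to\varepsilon\lambda_j$. The terms with $m>\coninf+1$ are controlled by the tail $\xi(\coninf+1;(c,x))$, since $q\to\infty$ makes $q^{\frac12-m}\le q^{-\frac12-\coninf}$ eventually.

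First we show that $I:=\int_c^\infty q^{-\frac12-\coninf}dt<\infty$, arguing by contradiction. Suppose $I=\infty$. The leading divergent term $\lambda^{\coninf+1}\xi(\coninf+1;\cdot)$ must then be cancelled by some difference $\xi(\ell)-\zeta(\ell)$ with $\ell\le\coninf$, so this difference is at least of size $\gtrsim\int_c^x q^{-\frac12-\coninf}$. Isolating that $\ell$ and letting $\varepsilon\to0$ gives a contradiction, because the $\varepsilon^\ell$ term dominates the $\varepsilon^{\coninf+1}$ term. Once $I<\infty$, all terms with $\ell>\coninf$ converge. Isolating each $\ell\le\coninf$ then gives (ii): $\xi(\ell;(c,x))-\zeta(\ell;(c',x))$ converges.

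Part (i) then follows. For $\ell\le\coninf$ we know $\zeta(\ell;(c',x))\to\infty$, because the resolvent is not in the Schatten $\coninf$-class and by the monotone convergence $\lambda_j(c',x)\downarrow\lambda_j$. By (ii), $\xi(\ell)$ diverges too. For $\ell\ge\coninf+1$ we have $q^{\frac12-\ell}\le q^{-\frac12-\coninf}$ for large $t$, and $I<\infty$ gives convergence.

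I expect the main obstacle to be the contradiction step showing $I<\infty$. It requires making precise that the remainder beyond order $\coninf$ is asymptotically $\lambda^{\coninf+1}\xi(\coninf+1)$ plus lower order, uniformly enough in the rescaling $\varepsilon$. It also requires choosing the $\lambda_j$ so that the power sums isolate a single index while still leaving a nonzero coefficient in front of the divergent remainder. Roots-of-unity configurations as in Corollary \ref{CorTildeTheta} should provide such choices.
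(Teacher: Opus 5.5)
Your route is the paper's. You expand \eqref{LG_asymp} termwise and compare with \eqref{NormTheta} to get the finite limit \eqref{LimLim}. You then decouple the powers of $\lambda$ by combining the relation at several spectral parameters and rescaling by $\varepsilon$, exclude $\int_c^\infty q^{-\frac12-\coninf}dt=\infty$ by the $\varepsilon^{\ell}$ versus $\varepsilon^{\coninf+1}$ comparison, and read off (ii) and (i).

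One step, however, fails as you justify it. You let the $\lambda_j$ be complex (roots of unity) ``since the asymptotics extend to $z\in\mathbb C$ through the entire normalized $\theta_\infty$''. The normalized $\theta_\infty$ only supplies the $\zeta$-side of the comparison at complex $z$. The $\xi$-side comes from \eqref{LG_asymp}, which Hypothesis \ref{Hypo::LG} provides only for $\lambda\in\bbR$. The Liouville--Green formula at a complex $z$ is equivalent to convergence of $\sum_{\ell\le\coninf}\frac{z^\ell}{\ell}[\xi(\ell;(c,x))-\zeta(\ell;(c',x))]+\sum_{\ell>\coninf}\frac{z^\ell}{\ell}\xi(\ell;(c,x))$, which is exactly what you are trying to prove. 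The paper obtains the extension to $z\in\bbC$ only as a consequence of the theorem. So your finite-limit relation is available only at real $\lambda$, and the complex configurations are circular.

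The repair leaves your structure intact: keep distinct, small, real $\lambda_1,\dots,\lambda_n$ and use real signed weights. A finite limit is preserved under real linear combinations. So you can solve the Vandermonde system $\sum_j c_j\lambda_j^m=\delta_{m\ell}$, $m=1,\dots,\coninf+1$, with real $c_j$, or use differences $\sum_j z_j^m=\sum_j w_j^m$ as in Corollary \ref{CorTheta}. The rescaling $\lambda_j\to\varepsilon\lambda_j$, $c_j\to\varepsilon^{-\ell}c_j$ then isolates the $\ell$-th coefficient and multiplies every remaining term by a positive power of $\varepsilon$.

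Unweighted real sums can never annihilate an even power sum, which is presumably why you reached for complex points. The paper's own phrase ``choose $\lambda_1,\dots,\lambda_n$ such that $\sum_j\lambda_j^m=0$'' has to be read in this signed sense as well.

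Weights also simplify the step you flagged as the main obstacle. Take $\ell=\coninf+1$ directly and set $Q=\inf_{t>c}q(t)$, so that $q^{\frac12-m}\le Q^{-(m-\coninf-1)}q^{-\frac12-\coninf}$. Then the combination equals $\bigl(\frac{(2\coninf-1)!!}{2^{\coninf+1}(\coninf+1)!}+O(\varepsilon)\bigr)\int_c^x q(t)^{-\frac12-\coninf}dt$. Its boundedness gives $\int_c^\infty q^{-\frac12-\coninf}dt<\infty$ at once. This removes the contradiction argument and the pigeonholing over ``at least one $\ell$''.
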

The above results demonstrate that the asymptotic formulas
\begin{align}
    u_\infty(z,x) \propto u_\infty(0,x) \exp{\Big\lbrace\sum_{\ell=1}^{\coninf} \frac{z^\ell}{\ell} \zeta(\ell; (c,x))\Big\rbrace}, \quad
    v_\infty(z,x) \propto v_\infty(0,x) \exp{\Big\lbrace-\sum_{\ell=1}^{\coninf} \frac{z^\ell}{\ell} \zeta(\ell; (c,x))\Big\rbrace},
\end{align}
 for $x \to \infty$ can be viewed as a natural generalization of the Liouville--Green asymptotic formula in the classically forbidden region. As alluded to earlier, such formulas usually require regularity assumptions of the potential, see \cite[Thm.~2.1]{Ol97}. No such assumptions are made in the present work, rather we require a purely \emph{spectral} condition given in terms of the associated resolvents being in the Schatten $p$-class with $p = \coninf+1$.
 
Theorem \ref{TheoremLG} also suggests a connection between Liouville--Green type formulas in the classically forbidden regions, and convergence properties of the expressions in $(i)$, $(ii)$ above. This raises the question:  Under which additional assumptions does the converse of Theorem \ref{TheoremLG} hold?
\begin{problem}\label{ProblemLG}
    Let a Schr\"odinger differential expression $\tau = -\frac{d^2}{dx^2} + q$, with $x \in (a,\infty)$ and $q(x) \to \infty$ as $x \to \infty$ be given. Assume that items $(i)$ and $(ii)$ stated in Theorem \ref{TheoremLG} hold. Will Hypothesis
\ref{Hypo::LG} hold in this case or are some additional smoothness assumptions on $q(x)$ necessary?
\end{problem}

\subsection{Approximation of singular problems by regular ones: convergence rate of eigenvalues}\label{Sect:EigenConv}

Next, we revisit a classic problem in the spectral theory of Sturm--Liouville operators. Consider a generally singular Sturm--Liouville expression $\tau$ on $(a,b)$, and a self-adjoint realization $T$ (later we take the Friedrichs realization). As we are interested in the singular case, we assume that $\tau$ is in the limit point case at one (or both) of the endpoints. In particular, $T$ might satisfy a boundary condition at the limit circle endpoint, but will certainly not satisfy any coupled boundary conditions. 

It is known that if we consider appropriate self-adjoint realizations $T_n$ of the truncated differential expression $\tau|_{(a_n, b_n)}$ for $n \in \bbN$ and $a < a_n < b_n < b$, then the spectrum $\sigma(T_n)$ will approximate $\sigma(T)$ in the following sense. For any $\lambda \in \sigma(T)$ there will be an appropriate sequence $j_n$ such that $\lambda_{j_n, n} \to \lambda$. Here $\lambda_{j, n}$ denotes the $j^{th}$ eigenvalue of $T_n$ (note that $\sigma(T_n) = \lbrace \lambda_{j,n} \rbrace_{j=1}^\infty$ is always discrete as $\tau|_{(a_n, b_n)}$ is regular at both endpoints). Moreover, if $T$ has exactly $N \in \bbN \cup \lbrace \infty \rbrace$ discrete eigenvalues $\lbrace \lambda_j \rbrace_{j=1}^N$ below its continuous spectrum, then $\lambda_{j,n} \to \lambda_j$ as $n \to \infty$ for $j \leq N$, and $\lambda_{j,n} \to \inf \sigma_{ess}(T)$ as $n \to \infty$ for $j > N$. Further details, in particular the exact choice of boundary conditions satisfied by the `inherited operator' $T_n$ can be found in \cite{BEWZ,EMZ01}, \cite[Ch.~10.8]{Ze05}. For our purposes, it is enough to state that the inherited boundary condition near a limit point endpoint is the Dirichlet boundary condition.

In the following, we will compare the convergence rates $\lambda_{j, n} - \lambda_j \to 0$ for different $j \in \bbN$, and compute a universal bound for this rate which holds for all $j$. Throughout the analysis, we make the following extra assumptions:
\begin{hypothesis}\label{Hypo::ConvRate}
    Assume that the self-adjoint realizations of $\tau|_{(a,c)}$ for $c \in (a,b)$ have trace-class resolvents, meaning $\cona = 0$ (i.e., we are in the setting of \cite{PS24} at $x = a$), and that the self-adjoint realizations of $\tau$ have resolvents in the Schatten $p$-class with $p = \conb+1$, but not $p = \conb$. We will denote by $\sigma(T) = \lbrace \lambda_j \rbrace_{j \in \bbN}$ the spectrum of the Friedrichs realization $T$ of $\tau$, and by $\sigma(T_{(a,x)}) = \lbrace \lambda_j(x) \rbrace_{j \in \bbN}$ the spectrum of the Friedrichs realization $T|_{(a,x)}$ of $\tau|_{(a,x)}$.
\end{hypothesis}
Note that the above hypothesis is equivalent to Hypothesis \ref{Hypo:Trace}. We state it here for the convenience of the reader. 

We will be interested in the convergence rate $\lambda_j(x) - \lambda_j \to 0$, which holds by the results outlined in \cite{BEWZ}. However, in contrast to \cite{BEWZ}, we will have to keep one endpoint fixed which we choose to be $a$, that is, $a_n = a = const$. Moreover, it is more natural for us to work with a \emph{continuously varying} right endpoint $x \uparrow b$, that is we replace the sequence $b_n$ with a continuous variable $x$. This later point is however just a matter of convention.

Assuming Hypothesis \ref{Hypo::ConvRate}, we know that there exists an entire and principal solution $\varphi_a(z,x)$ at $x = a$ satisfying the normalization $\lim_{x\downarrow a} \frac{\varphi_a(z_1, x)}{\varphi_a(z_2,x)} = 1$ (for an explicit construction see \cite[Sect.~3]{PS24}). As argued in Section \ref{Sect:Trace}, $\varphi_a(z,x)$ can be expressed as $\varphi_a(z,x) = \varphi_a(0,x) \prod_{j \in \bbN} \big(1- \frac{z}{\lambda_j(x)} \big)$. Similarly, at the endpoint $b$, let us choose a \emph{nonprincipal} entire fundamental solution $\theta_b(z,x)$ satisfying the normalization \eqref{NormTheta} with $c = a$ (the choice of $c$ in \eqref{NormTheta} is justified by the trace class resolvent property at $x = a$ postulated in Hypothesis \ref{Hypo::ConvRate}). We can w.l.o.g assume that $0 \not \in \sigma(T)$ (otherwise perform a spectral shift), in which case $\varphi_a(0,x)$ is nonprincipal at $x = b$, and we can even assume that $\theta_b(0,x) = \varphi_a(0,x)$.

Note that by definition $\varphi_a(\lambda_m, x)$ will be the $m^{th}$ eigenfunction of $T$, and therefore principal at both endpoints. In particular we have
\begin{align}\nonumber
   \frac{\varphi_a(\lambda_m, x)}{\theta_b(\lambda_m,x)} \overset{x\uparrow b}{\sim} \frac{\varphi_a(\lambda_m, x)}{\theta_b(0,x)} \exp{\Big\lbrace\sum_{\ell=1}^{\conb} \frac{\lambda_m^\ell}{\ell} \zeta(\ell; (a,x))\Big\rbrace} &= \frac{\varphi_a(\lambda_m, x)}{\varphi_a(0,x)} \exp{\Big\lbrace\sum_{\ell=1}^{\conb} \frac{\lambda_m^\ell}{\ell} \zeta(\ell; (a,x))\Big\rbrace} 
    \\\label{EEE}
    & = \bigg(\prod_{j \in \bbN \setminus \lbrace m \rbrace} E\Big( \frac{\lambda_m}{\lambda_j(x)},\conb \Big) \bigg) E\Big( \frac{\lambda_m}{\lambda_m(x)},\conb \Big).
\end{align}
Note that the infinite product in the last line converges as $x \uparrow b$, while 
\begin{align}
    E\Big( \frac{\lambda_m}{\lambda_m(x)},\conb \Big) \propto \Big(1 - \frac{\lambda_m}{\lambda_m(x)} \Big) \propto \lambda_m(x) -\lambda_m, \quad \text{for } \ x \uparrow b
\end{align}
(recall $\lambda_m \not = 0$ by assumption). This shows that the convergence rate $\lambda_m(x) - \lambda_m \to 0$ is proportional to the convergence rate $\frac{u_b(x)}{v_b(x)} \to 0$ of the ratio of principal and nonprincipal solutions at $x = b$. In light of \eqref{7.1} this leads to a remarkably \emph{rigid} relationship between the convergence rate of $\lambda_m(x) - \lambda_m \to 0$ and $\lambda_j(x) - \lambda_j \to 0$, which is fully specified in terms the divergence rates of the partial $\zeta$-values $\zeta(1; (a,x)), \dots, \zeta(\conb; (a,x))$:
\begin{align}
    \lambda_j(x) - \lambda_j \propto \frac{\varphi_b(\lambda_j, x)}{\theta_b(\lambda_j, x)} &\propto \frac{\varphi_b(\lambda_m, x)}{\theta_b(\lambda_m, x)}\exp{\Big\lbrace2\sum_{\ell=1}^{\conb} \frac{\lambda_j^\ell - \lambda_m^\ell}{\ell} \zeta(\ell; (a,x))\Big\rbrace} \notag 
    \\
    &\propto \big(\lambda_m(x) - \lambda_m\big)\exp{\Big\lbrace2\sum_{\ell=1}^{\conb} \frac{\lambda_j^\ell - \lambda_m^\ell}{\ell} \zeta(\ell; (a,x))\Big\rbrace}
\end{align}
(note that the $\varphi_b$ above instead of the $\varphi_a$ in \eqref{EEE} is not a typo, as both are equal up to a multiplicative constant for $z = \lambda_j$). We summarize these findings in the following proposition.
\begin{proposition}\label{PropConv}
    Assume Hypothesis \ref{Hypo::ConvRate} holds. Then 
    \begin{align}\label{LambdaConv}
        \lambda_j(x) - \lambda_j \propto \big(\lambda_m(x) - \lambda_m\big)\exp{\Big\lbrace2\sum_{\ell=1}^{\conb} \frac{\lambda_j^\ell - \lambda_m^\ell}{\ell} \zeta(\ell; (a,x))\Big\rbrace}, \quad \text{for} \ x \uparrow b. 
    \end{align}
\end{proposition}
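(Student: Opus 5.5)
The argument has two steps. First, for each fixed eigenvalue $\lambda_m$, we tie the rate $\lambda_m(x)-\lambda_m$ to the ratio of principal to nonprincipal solutions at $b$. Second, we compare these ratios for different eigenvalues using the normalizations \eqref{NormTheta} and \eqref{asymP}.

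We set up the solutions as follows. By a spectral shift we assume $0\notin\sigma(T)$. Under Hypothesis \ref{Hypo::ConvRate} we take the entire principal solution $\varphi_a(z,x)$ at $a$ normalized by \eqref{PhiOne}. As in Section \ref{Sect:Trace}, it has the product representation
\begin{align*}
\varphi_a(z,x)=\varphi_a(0,x)\prod_{j\in\bbN}\bigl(1-z/\lambda_j(x)\bigr).
\end{align*}
At $b$ we take $\theta_b$ normalized by \eqref{NormTheta} with $c=a$, which is admissible because $\cona=0$. We also set $\theta_b(0,x)=\varphi_a(0,x)$, which is nonprincipal at $b$ since $0\notin\sigma(T)$.

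First step. Evaluate at $z=\lambda_m$ and split the product into Weierstrass factors exactly as in \eqref{EEE}. Then $\varphi_a(\lambda_m,x)/\theta_b(\lambda_m,x)$ is asymptotic to
\begin{align*}
\Bigl(\prod_{j\neq m}E\bigl(\lambda_m/\lambda_j(x),\conb\bigr)\Bigr)\,E\bigl(\lambda_m/\lambda_m(x),\conb\bigr).
\end{align*}
The main technical point is that the product over $j\neq m$ converges to a \emph{nonzero} limit as $x\uparrow b$. We use monotonicity $\lambda_j(x)\downarrow\lambda_j$ and the bound $E(w,\conb)=1+O(|w|^{\conb+1})$, which holds uniformly for the tail $j\ge N$ since eventually $|\lambda_m/\lambda_j(x)|\le|\lambda_m/\lambda_j|<1/2$. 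Dominated convergence then applies, using $\sum_j\lambda_j^{-\conb-1}<\infty$. The finitely many factors with $j<N$, $j\neq m$, converge to $E(\lambda_m/\lambda_j,\conb)$, which is nonzero because the eigenvalues are simple (limit point at $b$, separated conditions). The last factor is $\propto 1-\lambda_m/\lambda_m(x)\propto\lambda_m(x)-\lambda_m$. Hence
\begin{align*}
\lambda_m(x)-\lambda_m\propto\frac{\varphi_a(\lambda_m,x)}{\theta_b(\lambda_m,x)}.
\end{align*}
Since $\varphi_a(\lambda_m,\cdot)$ is the $m$-th eigenfunction, it is principal at $b$ and may be replaced by $u_b(\lambda_m,x)$ up to a nonzero constant.

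Second step. We compare the ratios $u_b(\lambda,x)/\theta_b(\lambda,x)$ for $\lambda=\lambda_j$ and $\lambda=\lambda_m$. By \eqref{NormTheta},
\begin{align*}
\theta_b(\lambda,x)\sim\theta_b(0,x)\exp\Bigl\{-\sum_{\ell=1}^{\conb}\tfrac{\lambda^\ell}{\ell}\zeta(\ell;(a,x))\Bigr\}.
\end{align*}
By Theorem \ref{TheoremUV} (\eqref{asymP} at the endpoint $b$, proved via Proposition \ref{Prop:RankOne}),
\begin{align*}
u_b(\lambda,x)\propto u_b(0,x)\exp\Bigl\{\sum_{\ell=1}^{\conb}\tfrac{\lambda^\ell}{\ell}\zeta(\ell;(c,x))\Bigr\}.
\end{align*}
We replace $\zeta(\ell;(c,x))$ by $\zeta(\ell;(a,x))$. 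The difference is $O(1)$ as $x\uparrow b$: by the same argument used after \eqref{thetaU}, the difference converges when the left endpoint is perturbed, and $c=a$ is allowed because $\cona=0$. This $O(1)$ is absorbed into $\propto$.

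Dividing the two asymptotics, we get
\begin{align*}
\frac{u_b(\lambda,x)}{\theta_b(\lambda,x)}\propto\frac{u_b(0,x)}{\theta_b(0,x)}\exp\Bigl\{2\sum_{\ell=1}^{\conb}\tfrac{\lambda^\ell}{\ell}\zeta(\ell;(a,x))\Bigr\}.
\end{align*}
Taking the quotient of this formula at $\lambda=\lambda_j$ and at $\lambda=\lambda_m$ cancels the $\lambda$-independent factor $u_b(0,x)/\theta_b(0,x)$. Combined with the first step applied to both $j$ and $m$, this yields \eqref{LambdaConv}.

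The step I expect to need the most care is the nonvanishing of the limit of the product over $j\neq m$ in the first step, i.e.\ simplicity of $\lambda_m$ and the uniform tail bound. The rest is bookkeeping with the normalizations already established.
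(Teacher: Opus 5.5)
Your proposal is correct and follows the paper's argument essentially verbatim. You evaluate the product representation of $\varphi_a$ at $z=\lambda_m$ as in \eqref{EEE}, which ties $\lambda_m(x)-\lambda_m$ to the ratio of principal to nonprincipal solutions at $b$. The normalizations \eqref{NormTheta} and \eqref{asymP} at $b$ (the paper invokes \eqref{7.1} with the endpoints interchanged) then produce the factor $\exp\{2\sum_{\ell}\frac{\lambda_j^\ell-\lambda_m^\ell}{\ell}\zeta(\ell;(a,x))\}$.

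The paper leaves two points implicit, and you justify both correctly. The first is that the limit of $\prod_{j\neq m}E(\lambda_m/\lambda_j(x),\conb)$ is nonzero, by simplicity of the eigenvalues. The second is that $\zeta(\ell;(c,x))$ may be replaced by $\zeta(\ell;(a,x))$ up to a convergent difference.
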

Note that Proposition \ref{PropConv} implies that the convergence $\lambda_j(x) - \lambda_j \to 0$ becomes slower as $j$ increases.  

The preceding arguments rely on the endpoint $a$ being `fixed', and only the right endpoint $x \uparrow b$ varying. Additionally, we have to assume that the self-adjoint realizations of $\tau|_{(a,c)}$ have trace class resolvents. It would be interesting to understand whether, in the absence of these assumptions, the convergence rates of $\lambda_j(x_1,x_2) \to \lambda_j$ with $\lambda_j(x_1, x_2)$ being the $j^{th}$ Dirichlet eigenvalue corresponding to $\tau|_{(x_1, x_2)}$ have a similar structure. We pose this as an open problem:
\begin{problem}\label{Prob:Conv}
    Assume that $\tau$ has self-adjoint realizations with resolvents in the Schatten $p$-class. What can be said about the relationship between the convergence rates of 
    \begin{align}
        \lambda_j(x_1,x_2) \to \lambda_j \ \text{as} \ x_1 \downarrow a, \text{and} \ x_2 \uparrow b, 
    \end{align}
    for different values $j \in \bbN$? 
\end{problem}

Next we will show a universal upper bound on the convergence rate $\lambda_j(x) - \lambda_j \to 0$, without any reference to the convergence rate of the other eigenvalues.
\begin{corollary}\label{CorLambdaX}
    Assume Hypothesis \ref{Hypo::ConvRate} holds. Then for any $K>0$ and $j \in \mathbb{N}$, we have
    \begin{align}\label{LambdaK}
        \lambda_j(x) - \lambda_j = o\Big(\exp{\Big\lbrace-K\zeta(1; (a,x))\Big\rbrace} \Big), \quad x \uparrow b.
    \end{align}
\end{corollary}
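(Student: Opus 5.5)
The plan is to use Proposition~\ref{PropConv} with $m = j$ fixed and an auxiliary index $k$ taken very large. Solving \eqref{LambdaConv} for $\lambda_j(x)-\lambda_j$ (with the roles of $j$ and $m$ relabelled) gives
\begin{align*}
\lambda_j(x) - \lambda_j \propto \big(\lambda_k(x) - \lambda_k\big)\exp\Big\lbrace -2\sum_{\ell=1}^{\conb} \frac{\lambda_k^\ell - \lambda_j^\ell}{\ell}\, \zeta(\ell; (a,x))\Big\rbrace, \quad x\uparrow b.
\end{align*}
Since $\lambda_k(x)\downarrow\lambda_k$, the prefactor $\lambda_k(x)-\lambda_k$ is bounded, indeed tends to zero. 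It therefore suffices to show that the exponent is at most $-(K+1)\zeta(1;(a,x))$ for $x$ close to $b$, once $k$ is chosen large enough relative to $K$ and $j$. The extra $+1$ then supplies the little-$o$.

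To control the exponent, the first step is to compare the divergence rates of the partial $\zeta$-values. For $\ell\geq 2$ I would bound $\zeta(\ell;(a,x))$ in terms of $\zeta(1;(a,x))$. Take $x$ close enough to $b$ that all $\lambda_n(a,x) \geq \lambda_n \geq \lambda_1 >0$; after a spectral shift we may assume $\lambda_1>0$, since a shift changes each $\zeta(\ell)$ only by lower-order combinations. Then
\begin{align*}
\zeta(\ell;(a,x)) = \sum_n \lambda_n(a,x)^{-\ell} \leq \lambda_1^{1-\ell}\, \zeta(1;(a,x)).
\end{align*}
Hence every term with $\ell\geq 2$ contributes at most $C_\ell(\lambda_k^\ell+\lambda_j^\ell)\,\zeta(1;(a,x))$ in absolute value. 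This crude bound is too weak, because $\lambda_k^\ell$ grows faster than $\lambda_k$. I would therefore instead use positivity directly: for $\lambda_k>\lambda_j>0$ every coefficient $(\lambda_k^\ell-\lambda_j^\ell)/\ell$ is positive and every $\zeta(\ell;(a,x))$ is positive. So
\begin{align*}
-2\sum_{\ell=1}^{\conb} \frac{\lambda_k^\ell-\lambda_j^\ell}{\ell}\,\zeta(\ell;(a,x)) \leq -2(\lambda_k-\lambda_j)\,\zeta(1;(a,x)).
\end{align*}
This settles the matter without any need to compare the different $\zeta(\ell)$ with one another. Choosing $k$ with $2(\lambda_k-\lambda_j) \geq K+1$ is possible since $\lambda_k\to\infty$.

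Combining these bounds,
\[
|\lambda_j(x)-\lambda_j| \leq C\,|\lambda_k(x)-\lambda_k|\, e^{-(K+1)\zeta(1;(a,x))} = o\big(e^{-K\zeta(1;(a,x))}\big),
\]
because $\zeta(1;(a,x))\to\infty$ (here $\conb\geq1$) and $\lambda_k(x)-\lambda_k$ stays bounded.

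The main obstacle is the positivity of the partial $\zeta$-values and of the relevant eigenvalues after the spectral shift. A shift $z\mapsto z-z_1$ changes both the $\lambda$'s and the definition of $\zeta(\ell;(a,x))$. I would handle this by first arguing that only finitely many eigenvalues $\lambda_n(a,x)$ can be nonpositive, uniformly for $x$ near $b$, by monotonicity in $x$ and discreteness. These contribute $O(1)$ to each $\zeta(\ell;(a,x))$, and that error is absorbed into the $\propto$ constant.
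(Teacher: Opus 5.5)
Your proof is correct and is essentially the paper's argument: apply Proposition~\ref{PropConv} with an auxiliary index $k$ chosen so large that $2(\lambda_k-\lambda_j)$ exceeds $K$, and use that $\lambda_k(x)-\lambda_k$ stays bounded while $\zeta(1;(a,x))\to\infty$. The one minor difference is in the $\ell\geq 2$ terms. You discard them by a sign argument, which is valid once you split off the finitely many negative eigenvalues as an $O(1)$ error and also take $\lambda_k>|\lambda_j|$ in case $\lambda_j<0$. The paper instead invokes the fact that $\zeta(1;(a,x))$ dominates $\zeta(\ell;(a,x))$ for $\ell\geq 2$.
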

\begin{proof}
    The claim follows from \eqref{LambdaConv} by choosing $m$ sufficiently large such that $2(\lambda_m - \lambda_j) > K$ and observing that the divergence rate of $\zeta(1; (a,x))$ as $x \uparrow b$ will asymptotically dominate the divergence rate of any $\zeta(\ell; (a,x))$ with $\ell \geq 2$.
\end{proof}

\begin{remark}
Note that there are multiple ways of determining $\zeta(1; (c,x))$ up to a bounded error. In case Hypothesis \ref{Hypo::LG} holds, we can use Theorem \ref{TheoremLG}. More generally, we have 
\begin{align}
        \int_c^x u_b(\lambda,t) v_b(\lambda,t) r(t)dx = \zeta(1; (c,x)) + O(\zeta(2; (c,x))), \quad \text{for} \, x \uparrow b,
\end{align}
for principal and nonprincipal solutions $u_b(\lambda, x)$ and $v_b(\lambda, x)$ satisfying $W(v_b, u_b) \equiv -1$ (the minus is due to the convergence at the \emph{right} endpoint). This formula is shown as part of Lemma \ref{LemmaInt1} (for the left endpoint $a$, but the argument works analogously for $b$).
\end{remark}

Finally, let us clarify that while the right-hand side in \eqref{LambdaK} is the same for any fixed $j \in \bbN$, the convergence will certainly not be uniform in $j$, see Figure~\ref{Fig:Lag} below. To see this, note that $\lambda_j(x)^{-1}$ is summable by the resolvent trace class assumption for $\tau|_{(a,x)}$ , but $\lambda_j^{-1}$ is not if $\conb \geq 1$.
\begin{figure}[ht]
  \centering
  % Left figure minipage
  \begin{minipage}{0.48\textwidth}
    \centering
    \includegraphics[width=0.7\textwidth]{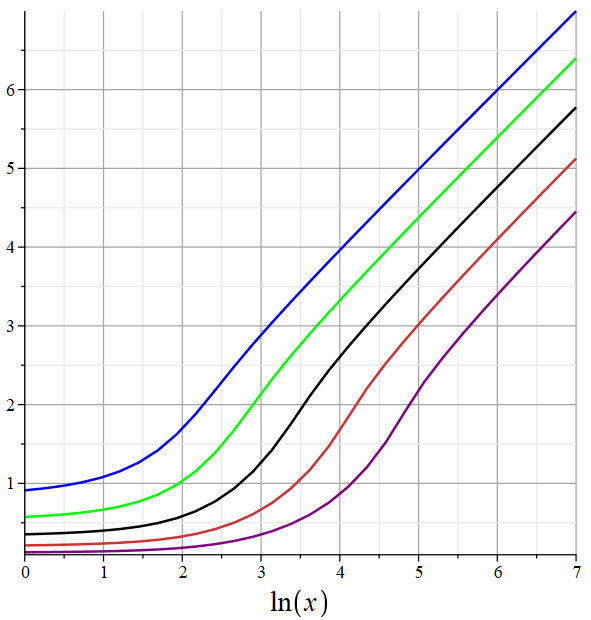}
    % Optional: \caption{Left sub-caption} if you want (a)
  \end{minipage}
  \hfill % Adds horizontal space between the two minipages
  % Right figure minipage
  \begin{minipage}{0.48\textwidth}
    \centering
    \includegraphics[width=.78\textwidth]{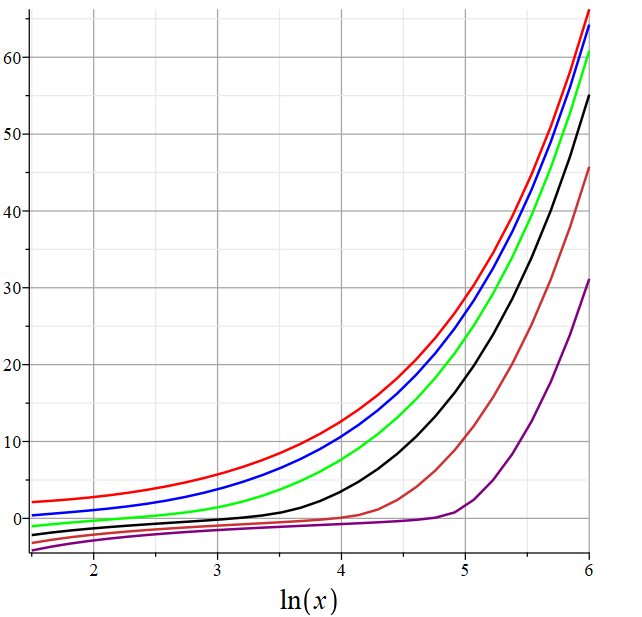}
    % Optional: \caption{Right sub-caption} if you want (b)
  \end{minipage}

 % \vspace{10pt} % Optional: Adds a small vertical gap before the caption
  \caption{\textbf{Left:} Plot of the functions $f_j(x) = \frac{\ln(\lambda_j(x) - \lambda_j) - \ln(\lambda_1(x) - \lambda_1)}{2(\lambda_j - \lambda_1)}$ in the case of the truncated Laguerre problem with $\gamma = 1$, see Section~\ref{LagEx}, with $j = 2$ (blue), $j = 4$ (green), $j = 8$ (black), $j = 16$ (dark red) and $j = 32$ (purple). Here $\lambda_j(x)$ are numerically computed Dirichlet eigenvalues for the problem restricted to $(0,x)$ and $\lambda_j = j-1$. As the problem has Hibert--Schmidt resolvents, and $\zeta(1; (0,x)) = \ln(x) + O(1)$, cf.~Section~\ref{LagEx}, Proposition \ref{PropConv} implies $f_j(x) = \ln(x) + O(1)$, that is,~the growth in the plot becomes linear with slope $1$ (note that the horizontal axis is logarithmic in $x$). \textbf{Right}: Plot of the functions $g_j(x)  =-\frac{\ln(\lambda_j(x) - \lambda_j)}{\ln(x)}$ with $j \in \lbrace 1, 2, 4, 8, 16, 32\rbrace$ for the same problem and the same color code as on the left, with the case $j = 1$ being in bright red. According to Corollary \ref{CorLambdaX} we have that $g_j(x) \to \infty$ as $x \to \infty$.}
  \label{Fig:Lag}
\end{figure} 

\section{Examples}\label{Sect:Examples}

Before turning our attention to some explicit examples, we would like to point out how the current study can be utilized to readily extend previous results. In particular, the P\"oschl--Teller potential recently studied in \cite{FS25} and the generalized Bessel equation studied in \cite{FPS25} can now be extended to include parameter choices which yield limit point endpoints.
By our general results here, this can easily be done by using the characteristic function corresponding to the Friedrichs boundary condition at the endpoint which will become limit point. One then simply allows the parameter choices of the differential expression to vary into the range for which the endpoint becomes limit point in both of those examples. This clearly illustrates that our work above greatly generalizes previous results.

\subsection{Harmonic oscillator}

We revisit, here, the harmonic oscillator $-y''(x)+ x^2 y(x)=z y(x),$ for $x\in\bbR$. The principal and nonprincipal solutions at the limit point endpoints $\pm\infty$ are $\phi_{\pm\infty}(z,x)=U(-z/2,\pm\sqrt{2}x)$ and $\theta_{\pm\infty}(z,x)=V(-z/2,\pm\sqrt{2}x)$, respectively where $U(c,\xi)$ and $V(c,\xi)$ are the parabolic cylinder functions \cite[Sect.~12.2(i)]{DLMF}. The asymptotics of $\theta_{\infty}(z,x)$, $\varphi_{\infty}(z,x)$ for $x \to \infty$ are given by \eqref{HarmV}, \eqref{HarmU} respectively (and similarly for the solutions at $-\infty$). By Corollary \ref{Coriii} it follows that $\varphi_{\pm \infty}(z,x)$ is of minimal order $\kappa_{\pm \infty} = 1$, and $\theta_{\pm\infty}(z,x)$ satisfies \eqref{TTTT} with $\mathfrak{p}_{\pm\infty} = 1$. By utilizing the connection formula \cite[Eq. 12.2.15]{DLMF} 
\begin{equation}\label{eq:c9}
U\left(c,-\xi\right)=-\sin(\pi c)U(c,\xi)+\frac{\pi}{\Gamma
((1/2)+c)}V(c,\xi),
\end{equation}
we find that the characteristic function in Theorem \ref{MainTheorem1} can be written as
\begin{align}
    F_0(z)=\lim_{x\to\infty}\frac{\phi_{-\infty}(z,x)}{\theta_{\infty}(z,x)}=\frac{\pi}{\Gamma((1-z)/2)}.
\end{align}
This agrees with the standard known result of the spectrum being given by $z=2n+1,$ $n\in\bbN_0$.

If one instead studies this problem on the half-line $(0,\infty)$, we can use the same solution as above to now find that the characteristic function is given by
\begin{align}
F_\alpha(z)&=\cos(\a)U(-z/2,0)+\sin(\a)\sqrt{2}U'(-z/2,0)\\
&= \sqrt{\pi}2^{(z-1)/4}\Big[\cos(\alpha) \frac{1}{\Gamma((3-z)/4)} + \sin(\alpha)\frac{2}{\Gamma((1-z)/4)}\Big],\quad \a\in[0,\pi),\ z\in\bbC,\notag
\end{align}
where we used \cite[Eq. 12.2.6 \& 12.2.7]{DLMF} and the prime denotes the derivative with respect to the second argument as usual. Here, the term in the square bracket would itself be a characteristic function of minimal order, as the prefactor never vanishes and is of order $1$. Notice that for the Dirichlet condition $\a=0$, the spectrum is given by $z=4n+3$, $n\in\bbN_0$, while Neumann yields $z=4n+1$, $n\in\bbN_0$. The union of the spectrum of these two operators should equal the full line problem (once again, see \cite{FS25a}), which is readily verified since $\{4n+3\}_{n\in\bbN_0}\cup\{4n+1\}_{n\in\bbN_0}=\{2n+1\}_{n\in\bbN_0}$.

\subsection{Laguerre example}\label{LagEx} We can now revisit the Laguerre example which was briefly mentioned in \cite[Sect.~11.5]{PS24} where it served as a type of \emph{counter-example} to the trace class resolvent case studied therein. As in the previous example, the eigenvalue growth for this problem is \emph{linear}, hence resolvents will no longer be trace class, but rather Hilbert--Schmidt.  

The Laguerre eigenvalue equation has the form
\begin{align}
    \tau_{Lag, \gamma} y = -\frac{1}{e^{-x} x^{\gamma-1}}\left(x^{\gamma}e^{-x}y'\right)'=z\,y,\quad x\in(0,\infty),\quad \gamma\in\R,
\end{align}
that is $r(x) = e^{-x} x^{\gamma-1}$, $p(x) = x^{\gamma}e^{-x}$ and $q(x) \equiv 0$. Two special solutions for $z = 0$ are given by
\begin{align}
    u_{\infty, \gamma}(0, x) = 1, \quad v_{\infty, \gamma}(0,x) = \int_c^x t^{-\gamma} e^t dt = x^{-\gamma} e^x(1 + o(1)). 
    \end{align}
It follows that $\int_0^x u_{\infty, \gamma}(0, t)v_{\infty, \gamma}(0,t) r(t) dt \sim \int_0^x t^{-1} dt \sim \ln\, x$, as $x \to \infty$. This implies via \eqref{TraceFormula} (adopted to the right endpoint) that
\begin{align}\label{Zeta1Lag}
    \zeta_{Lag}(1; (c,x)) = \ln\, x + O(1).
\end{align}
As the corresponding resolvents are known to be Hilbert--Schmidt (the eigenvalues grow linearly), we can therefore conclude that
\begin{align}
    \varphi_{\infty, \gamma}(\lambda,x) \propto \exp\lbrace \lambda \ln\, x+O(1) \rbrace \propto x^\lambda, \quad x \to \infty,
\end{align}
and
\begin{align}
    \theta_{\infty, \gamma}(\lambda,x) \propto x^{-\gamma}\exp\lbrace x-\lambda \ln\, x+O(1) \rbrace \propto x^{-\lambda-\gamma} e^x, \quad x \to \infty.
\end{align}
This example provides a simple check for our results, as we can readily compare it with the exact solutions known from the literature (see \cite[Eqs.~13.2.6 and 13.2.26]{DLMF})
\begin{align}
U(-\lambda,\gamma;x) \underset{x\to\infty}{\propto} x^\lambda, \quad
x^{1-\gamma} {}_1F_1(1-\gamma-\lambda, 2-\gamma; x)\underset{x\to\infty}{\propto} x^{-\lambda-\gamma}e^x ,\quad (\lambda+\gamma)\notin \N,
\end{align}
where $U$ denotes Tricomi's confluent hypergeometric function (not to be confused with the parabolic cylinder function $U$ from the previous example). For the asymptotic behavior see \cite[Eqs.~13.7.1 and 13.7.3]{DLMF}. In particular, setting $\varphi_{\infty, \gamma}(z,x) = U(-z, \gamma; x)$ we see that
\begin{align}
    \frac{\varphi_{\infty, \gamma}(z_1, x) \cdots \varphi_{\infty, \gamma}(z_n, x)}{\varphi_{\infty, \gamma}(w_1, x) \cdots \varphi_{\infty, \gamma}(w_n, x)} \propto x^{z_1 + \dots + z_n - w_1 - \dots - w_n}, \quad x \to \infty,
\end{align}
which is $\propto 1$ if and only if $\sum_{j=1}^n z_j = \sum_{j=1}^n w_j$. Hence with the help of Corollary \ref{Coriii} we see that $U(-z, \a; x)$ is an entire function of minimal order $1$ in $z$. 

To write down a formula for the characteristic function of minimal order, we need to know the behavior of the principal and nonprincipal solutions near $x = 0$. Note that two solutions of $\tau_{Lag, \gamma} y = 0$ are given, similarly to before, by $y_1(x) = 1$ and $y_2(x) = \int_x^c t^{-\gamma} e^t \, dt \propto t^{-\gamma + 1}$ as $x \downarrow 0$ if $\gamma > 1$, and $\propto \ln\, x$ if $\gamma = 1$. In case $\gamma < 1$ we set $y_2(x) = \int_0^x t^{-\gamma} e^t \, dt \propto t^{-\gamma + 1}$. In particular, we see that $y_1(x)$ is principal for $\gamma \geq 1$ and nonprincipal for $\gamma < 1$ (for $y_2(x)$ the reverse is true).  For $x \downarrow 0$, we have $\varphi_{0, \gamma}(0,x)\theta_{0, \gamma}(0, x)r(x) \propto y_1(x)y_2(x)r(x) \propto 1$ for $\gamma \not = 1$, and $\propto \ln\, x$ for $\gamma = 1$. Importantly, $\varphi_{0, \gamma}(0,x)\theta_{0, \gamma}(0, x)r(x)$ is clearly integrable near a neighbourhood of $x = 0$. Thus by \cite[Thm.~4.2]{PS24}, the resolvents of the self-adjoint realizations of $\tau_{Lag, \gamma}|_{(0,c)}$ will be trace class and the behavior of principal and nonprincipal solutions will be the same irrespectively, of $z \in \bbC$:
\begin{align}
    \varphi_{0, \gamma}(z,x) \propto \begin{cases}
    1, & \gamma \geq 1,
\\
 x^{-\gamma + 1}, & \gamma < 1,
\end{cases} \quad \theta_{0, \gamma}(z,x) \propto 
    \begin{cases}
    x^{-\gamma + 1}, & \gamma > 1,
\\
\ln\, x , & \gamma = 1,
\\
1, & \gamma < 1.
    \end{cases}
\end{align}
Thus a characteristic function of the Friedrichs realization of minimal order $1$ is given by
\begin{align}\label{Fgamma}
    F_{\gamma, 0}(z) =
    \begin{cases}
    \lim_{x \downarrow 0} \cfrac{U(-z, \gamma; x)}{x^{-\gamma + 1}}=\dfrac{\Gamma(\gamma-1)}{\Gamma(-z)}, & \gamma > 1,
    \vspace*{7pt}
    \\
    \lim_{x \downarrow 0} \cfrac{U(-z, \gamma; x)}{\ln\, x}=-\dfrac{1}{\Gamma(-z)}, & \gamma = 1,
    \vspace*{7pt}
    \\
    \lim_{x \downarrow 0} U(-z, \gamma; x)=\dfrac{\Gamma(1-\gamma)}{\Gamma(-z-\gamma+1)}, & \gamma < 1,
    \end{cases}
\end{align}
where we have utilized \cite[\S 13.2(iii)]{DLMF}. As characteristic functions of minimal order $1$ are unique up to a gauge transformation of the form $\widetilde F_{\gamma, 0}(z) = e^{A+Bz} F_{\gamma, 0}(z)$, we can alternatively choose $\widetilde F_{\gamma, 0}(z) =1/\Gamma(-z)$ for $\gamma\geq1$ and $\widetilde F_{\gamma, 0}(z) =\Gamma(-z-\gamma+1)$ for $\gamma<1$, which is also continuous with respect to $\gamma$. That it is independent of $\gamma$ for $\gamma \geq 1$ is a consequence of the fact that the Laguerre polynomials $L^{(\gamma)}_n(x)$ satisfy $\tau_{Lag, \gamma} L^{(\gamma)}_n = n L^{(\gamma)}_n$, that is, the eigenvalues do not depend on $\gamma$. The Laguerre polynomials are principal at $x = 0$, and hence are the Friedrichs eigenfunctions, if and only if $\gamma \geq 1$.

From the behavior of the nonprincipal solution near $x = 0$ it is clear that $\tau_{Lag, \gamma}$ is in the limit circle case at $x = 0$ if and only if $\gamma \in (0,2)$. In this case one can obtain the characteristic function corresponding to the $\alpha$-boundary condition at $x = 0$ via
\begin{align}
    F_{\gamma, \alpha}(z) = \cos(\a)\wti U(-z,\gamma; 0)+\sin(\a)\wti U^{\prime}(-z,\gamma; 0).
\end{align}
The above can be evaluated explicitly using \cite[Sect. 13]{DLMF}; we leave the details to the reader.

\appendix

\section{Asymptotical integral formula for \texorpdfstring{$\zeta(1; (x,c))$}{zeta(1;(x,c))} as \texorpdfstring{$x \downarrow a$}{x to a}}\label{appendix}
The goal of this appendix is to prove the following (asymptotic) integral formula for $\zeta(1; (x,c))$: 
\begin{align}\label{IntRep}
    \int_x^c u_a(\lambda,t) v_a(\lambda,t) r(t)dx = \zeta(1; (x,c)) + O(\zeta(2;(x,c))), \quad \text{for} \ x \downarrow a,
\end{align}
where $W(v_a(\lambda, \, \cdot \,), u_a(\lambda, \, \cdot \,)) = 1$. The above formula is very similar to the \emph{exact} integral representation in Proposition \ref{Prop:ExactZeta} for $\zeta(1; (x,c))$. Though not exact, \eqref{IntRep} is very convenient in practice, and we use it in Examples \ref{Sect:PowerPotentials}, \ref{LagEx} (see \eqref{Zeta1Harm}, \eqref{eq:zeta1}, \eqref{Zeta1Lag}). Moreover, it suffices to determine the exact spectral dependence of the principal and nonprincipal solutions in the Hilbert--Schmidt resolvent case.

\begin{lemma}\label{LemmaInt1}
    Assume that the self-adjoint realizations of $\tau|_{(a,c)}$ have a purely discrete spectrum. Let $u_a(\lambda,x)$ ($v_a(\lambda,x)$) be a principal (nonprincipal) solution at $x = a$ with $\lambda \in \mathbb R$, without any additional normalization assumption. Then
    \begin{align}\label{EqInt1}
        \lim_{x \downarrow a} \frac{u_a(\lambda,x)}{v_a(\lambda,x)} \int_x^c v_a^2(\lambda,t) r(t) dt = 0. 
    \end{align}
    Moreover, if additionally $W(v_a(\lambda, \cdot), u_a(\lambda, \cdot)) \equiv 1$, then we have
    \begin{align}\label{TraceFormula}
        \int_x^c u_a(\lambda,t) v_a(\lambda,t) r(t)dx = \zeta(1; (x,c)) + O(\zeta(2;(x,c)), \quad \text{for} \ x \downarrow a,
    \end{align}
    and in case $\lambda = 0$ we even have
    \begin{align}\label{Lambda0Formula}
        \int_x^c u_a(0,t) v_a(0,t) r(t)dx = \zeta(1; (x,c)) + O(1), \quad \text{for} \ x \downarrow a.
    \end{align}
\end{lemma}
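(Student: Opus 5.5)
We first prove \eqref{EqInt1}, then obtain \eqref{TraceFormula} by differentiating the characteristic function of the truncated problem $\tau|_{(x,c)}$ in the spectral parameter at $z=\lambda$, and finally specialize to $\lambda=0$.

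For \eqref{EqInt1}, fix $c$ with $v_a(\lambda,\cdot)$ nonvanishing on $(a,c]$. Split $\int_x^c v_a^2 r\,dt=\int_x^{y}+\int_y^c$ for fixed $y\in(x,c)$. The term $\frac{u_a(\lambda,x)}{v_a(\lambda,x)}\int_y^c v_a^2 r$ tends to $0$ because $u_a/v_a\to0$. For the first piece we use that the ratio $u_a/v_a$ is monotone near $a$, since $(u_a/v_a)'=W(v_a,u_a)/(pv_a^2)$ has a fixed sign. Hence $\frac{u_a(x)}{v_a(x)}\int_x^y v_a^2 r\le \int_x^y u_a(t)v_a(t) r(t)\,dt$, up to sign conventions. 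This is bounded by $\int_a^y |u_av_a| r$ only if that integral is finite, which is exactly the trace-class case.

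In general we instead use discreteness of the spectrum. By the Friedrichs-type characterization (cf.\ \cite[Thm.~4.2]{PS24} and its Hilbert--Schmidt analogues), discreteness of $\tau|_{(a,c)}$ is equivalent to $\sup_{x<y}\big(\int_x^y v_a^2 r\big)\cdot\frac{u_a(y)}{v_a(y)}\to0$ as $y\downarrow a$. This is the Muckenhoupt/Kac--Krein-type condition after the Liouville change of variable $s=\int v_a^{-2}p^{-1}$, i.e.\ compactness of the associated Hardy operator. Combining this with monotonicity gives \eqref{EqInt1}. I expect this step, identifying discreteness with the vanishing Hardy-type constant, to be the main obstacle. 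I would first try to prove it directly, via a test-function/Rayleigh quotient argument with $f=v_a\chi_{(x,y)}$ suitably cut off, rather than by citation.

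For \eqref{TraceFormula}, let $\psi_x(z,t)$ be the solution on $(x,c)$ with $\psi_x(z,x)=0$ and $\partial_t\psi_x(z,x)=1$. By \eqref{GenFct} (applied on $(x,c)$ from the left and evaluated at the right endpoint $c$), combined with Hadamard factorization, we have
\[
-\partial_z\ln\psi_x(z,c)\big|_{z=\lambda}=\sum_j \frac{1}{\lambda_j(x,c)-\lambda}=\zeta(1;(x,c))+\lambda\,\zeta(2;(x,c))+\dots,
\]
and the tail is $O(\zeta(2;(x,c)))$, since all $\lambda_j(x,c)\to\infty$ uniformly as $x\downarrow a$. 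On the other hand, variation of parameters with $\psi_x(\lambda,t)=u_a(\lambda,x)v_a(\lambda,t)-v_a(\lambda,x)u_a(\lambda,t)$ gives
\[
\partial_z\psi_x(\lambda,c)=\int_x^c\big[u_a(t)v_a(c)-v_a(t)u_a(c)\big]\psi_x(\lambda,t)\,r(t)\,dt .
\]
Dividing by $\psi_x(\lambda,c)\to -v_a(\lambda,x)u_a(\lambda,c)+\dots$ and expanding the products, the main term is $\int_x^c u_av_a r\,dt$ (with the sign fixed by $W(v_a,u_a)=1$). The remaining terms are of the form $\frac{u_a(x)}{v_a(x)}\int_x^c v_a^2 r$ or bounded quantities, and these vanish or stay bounded by \eqref{EqInt1}.

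For \eqref{Lambda0Formula}, take $\lambda=0$. Then the expansion above contains no $\lambda\zeta(2)$ term, and the $O(1)$ remainder comes only from the bounded and vanishing pieces.
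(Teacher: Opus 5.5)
Your proof of \eqref{EqInt1} has a genuine gap, and that is where the substance of the lemma lies. Your passage from \eqref{EqInt1} to \eqref{TraceFormula} is essentially the paper's: there one takes $v_a(\lambda,c)=0$, so that $\big[u_a-\frac{u_a(x)}{v_a(x)}v_a\big]v_a$ is the diagonal of the Dirichlet Green's function of $(x,c)$, and the correction term is exactly $\frac{u_a(x)}{v_a(x)}\int_x^c v_a^2 r$.

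The criterion you invoke, $\sup_{x<y}\big(\int_x^y v_a^2 r\big)\frac{u_a(y)}{v_a(y)}\to 0$, is false precisely in the case the lemma is needed for. At a limit point endpoint with discrete spectrum, $u_a$ is the solution that is $L^2$ near $a$ and $v_a$ is not, so $\sup_{x<y}\int_x^y v_a^2 r=\int_a^y v_a^2 r=\infty$. As stated, the criterion holds only in the limit circle case, where \eqref{EqInt1} is trivial anyway. The weight is integrated on the wrong side. Since $W(v_a,u_a)=1$ gives $(u_a/v_a)'=1/(pv_a^2)$, your Liouville variable is $s=u_a/v_a$. The Friedrichs condition at $a$ becomes $g(0)=0$ for $g=f/v_a$, and the weight is $w=prv_a^4$. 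Muckenhoupt's compactness condition for $h\mapsto\int_0^s h$ into $L^2(w\,ds)$ is then $\lim_{s\downarrow 0}s\int_s^S w\,d\sigma=0$. Because $w\,d\sigma=rv_a^2\,dt$, this is \emph{verbatim} $\lim_{y\downarrow a}\frac{u_a(y)}{v_a(y)}\int_y^c v_a^2 r\,dt=0$, i.e.\ \eqref{EqInt1} itself. So the splitting and monotonicity steps add nothing: the whole proof is the compactness criterion you explicitly left open. The test functions $v_a\chi_{(x,y)}$ you suggest also probe the wrong region.

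Your route can be completed as follows. Take $c$ with $v_a(\lambda,c)=0$ and $v_a>0$ on $(a,c)$. Set $s_y=u_a(y)/v_a(y)$ and $f_y=\min\{u_a,s_yv_a\}/\sqrt{s_y}$, i.e.\ $u_a/\sqrt{s_y}$ on $(a,y]$ glued to $\sqrt{s_y}\,v_a$ on $[y,c]$. In the ground-state representation of the Friedrichs form relative to the principal solution, its energy is $\int_a^c pu_a^2|(f_y/u_a)'|^2dt=s_y\int_y^c\frac{dt}{pu_a^2}=1$. Meanwhile $f_y\to0$ locally uniformly on $(a,c]$ and $\|f_y\|^2_{L^2(r)}\ge s_y\int_y^c v_a^2r$. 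Compactness of the resolvent then forces \eqref{EqInt1}, and the sufficiency half of Muckenhoupt's criterion gives the converse, so this route would even yield an equivalence.

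The paper avoids form methods. It writes $\frac{u_a(x)}{v_a(x)}\int_x^c v_a^2 r$ as the difference of the resolvent traces at $\lambda$ of two realizations on $(x,c)$: one with the $u_a(\lambda,x)$-boundary condition at $x$, the other with the Dirichlet condition there. It bounds the tail by interlacing and shows that both spectra converge to the Friedrichs spectrum of $(a,c)$. For the first family, each eigenfunction continued to the left must vanish at some $x^*\in(a,x)$ by comparison with the principal solution, so its eigenvalue equals $\lambda_{j,D}(x^*,c)$.

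There are also two smaller errors in your second part. First, $\lambda_j(x,c)$ does not tend to $\infty$ as $x\downarrow a$; it decreases to $\lambda_{j,F}(a,c)$. The $O(\zeta(2;(x,c)))$ remainder must instead come from the bound $\lambda_j(x,c)\ge\lambda_{j,F}(a,c)$. Second, for general $c$ your expansion leaves two extra terms:
\begin{itemize}
\item $\frac{v_a(c)}{u_a(c)}\int_x^c u_a^2 r$, which is bounded only because $u_a\in L^2$ near $a$;
\item $\frac{u_a(x)v_a(c)}{v_a(x)u_a(c)}\int_x^c u_av_ar$, which vanishes by monotonicity of $u_a/v_a$ together with \eqref{EqInt1}.
\end{itemize}
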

Note that the condition on the restriction $\tau|_{(a,c)}$ in Lemma \ref{LemmaInt1} is identical to the discrete spectrum assumption made in \cite[Lem.~3.2, 3.3]{GZ06}, which, in our setting, is equivalent to the existence of an entire fundamental system $\varphi_a(z,x)$, $\theta_a(z,x)$ (without any particular normalization), see \cite[Lem.~2.2, 2.4]{KST_IMRN}.

The key to the proof of Lemma \ref{LemmaInt1} given below will be the classic trace formula expressing the trace of a self-adjoint realization of $\tau|_{(c,d)}$ via the integral of the diagonal of the corresponding Green's function. To formulate this argument, we need to introduce some additional notation. We refer to the \emph{$\psi(x)|_{x = c}$-boundary condition} as the boundary condition satisfied by $\psi(x)$ at $x = c$. For example, $f$ will satisfy the $\psi(x)|_{x = c}$-boundary condition if and only if
\begin{align}
    \cos(\alpha) f(c) + \sin(\alpha) f^{[1]}(c) = 0,
\end{align}
where $\cos(\alpha) \psi(c) + \sin(\alpha) \psi^{[1]}(c) = 0$. Here it is implicitly assumed that $\psi$ is sufficiently regular for the above to make sense. We will write $\lambda_j(\psi_1(c), \psi_2(d))$ for the $j^{th}$ eigenvalue of the self-adjoint realization of $\tau|_{(c,d)}$ with $\psi_1(x)|_{x=c}$-boundary conditions at $x = c$ and $\psi_2(x)|_{x=d}$-boundary condition at $x = d$. A more accurate, though vastly more cumbersome, notation would be $\lambda_j(\psi_1(c), \psi^{[1]}_1(c); \psi_2(d), \psi_2^{[1]}(d))$. 

Now assuming $W(v_a(\lambda, \, \cdot \, ),  u_a(\lambda, \, \cdot \, )) \equiv 1$, we have the identity
\begin{align}\label{TraceCD}
    \int_{x}^c u_a(\lambda,t)v_a(\lambda,t)r(t) dt = \sum_{j \geq 1} \frac{1}{\lambda_j(u_a(\lambda,x), v_a(\lambda,c))-\lambda}, \quad \lambda \in \bbR.
\end{align}
The above equality is a consequence of the fact that $u_a(\lambda,t)$ satisfies the $u_a(\lambda, t)|_{t=x}$-boundary condition at $x$ and analogously for $v_a(\lambda, t)$. From the Wronskian normalization it follows that $u_a(\lambda,t)v_a(\lambda,t)$ is the diagonal of the corresponding Green function and \eqref{TraceCD} follows from Mercer's Theorem. Note that $\lambda_j(u_a(\lambda,x), v_a(\lambda,c)) = \lambda$ cannot happen, as otherwise $u_a(\lambda,x)$ would satisfy the same boundary condition as $v_a(\lambda,x)$ at $x = c$, which is impossible since $u_a$ and $v_a$ are linearly independent for all $z$.
\begin{proof}[Proof of Lemma \ref{LemmaInt1}]
As the proof is rather technical, we first provide an informal overview. The proof is divided into two parts: The first part uses the trace formula involving the diagonal of the Green's function to reduce Lemma \ref{LemmaInt1} to the convergence of $\lambda_j(u_a(\lambda,x), v_a(\lambda,c)) \to \lambda_{j, F}(a,c)$ as $x \downarrow a$ in the case $v_a(\lambda, c) = 0$ (made for simplicity), where $\lambda_{j, F}(a,c)$ denotes the Friedrichs eigenvalues corresponding to $\tau|_{(a,c)}$. If we could replace $\lambda_j(u_a(\lambda,x), v_a(\lambda,c))$ by the $j^{th}$ Dirichlet eigenvalue $\lambda_{j, D}(x,c)$ this would be a standard result, see \cite[Ch.~10.8]{Ze05}. Hence in the second part of the proof, we want to show that the $(\lambda, x)$-dependence of the left $u_a(\lambda,t)|_{t=x}$-boundary condition at $x$ actually improves the convergence rate. This is why the auxiliary function $\psi_{j,x}$ is introduced, which turns out to be a Dirichlet eigenfunction for $\tau|_{(x^*, c)}$ and $x^{*} \in (a,x)$.

\textbf{First part}: Throughout the proof let $W(v_a(\lambda, \, \cdot \, ),  u_a(\lambda, \, \cdot \, )) \equiv 1$.  We can w.l.o.g assume that $v_a(\lambda,c) = 0$ and $v_a(\lambda,x) > 0$ for $x\in (a,c)$ (note that, ultimately, only the behavior of nonprincipal solutions as $x \downarrow a$ matters, and that it is the same over all choices). In this case $v_a(\lambda,x)$ would satisfy the Dirichlet boundary condition at $x = c$. We can now compare the above integral formula with
\begin{align}\label{TraceDiri}
    \int_x^c \Big[u_a(\lambda,t) - \frac{u_a(\lambda,x)}{v_a(\lambda,x)} v_a(\lambda,t)\Big] v_a(\lambda,t) r(t) dt = \sum_{j \geq 1} \frac{1}{\lambda_{j, D}(x,c) -\lambda} = \zeta(1; (x,c)) + O(\lambda \zeta(2; (x,c))),
\end{align}
 where $\lambda_{j, D}(x,c)$ is shorthand for the Dirichlet eigenvalues corresponding to $\tau|_{(x,c)}$ (for the error we have used $\frac{1}{\lambda_j -\lambda} \approx \frac{1}{\lambda_j} + \frac{\lambda}{\lambda_j^2} + O(\frac{\lambda^2}{\lambda_j^3})$ for $\lambda_j \to \infty$; if $\lambda = 0$ then we have an exact formula). 

Now we know that $\lambda_1(u_a(\lambda,x), v_a(\lambda,c)) < \lambda_{1,D}(x,c) < \lambda_2(u_a(\lambda,x), \theta(\lambda,c)) < \dots$ (note as $v_a(\lambda, c) = 0$, only the boundary condition at $x$ differs for the two problems). Thus we have
\begin{align}\nonumber
    \frac{u_a(\lambda,x)}{v_a(\lambda,x)} &\int_x^c v_a^2(\lambda,t) r(t) dt = \sum_{j\geq 1} \frac{1}{\lambda_j(u_a(\lambda,x), v_a(\lambda,c)) - \lambda} - \sum_{j \geq 1} \frac{1}{\lambda_{j, D}(x,c) - \lambda}
    \\\label{leibniz}
    &= \sum_{j = 1}^N \Bigg[\frac{1}{\lambda_j(u_a(\lambda,x), v_a(\lambda,c)) - \lambda} - \frac{1}{\lambda_{j, D}(x,c) - \lambda}\Bigg] + Er_{N+1},
\end{align}
with $|Er_{N+1}| \leq \frac{1}{\lambda_{N+1}(u_a(\lambda,x), v_a(\lambda,c)) - \lambda}$ by the classic alternating series estimates. Note that $\lambda_{j,D}(x,c) \to \lambda_{j, F}(a,c)$ as $x \downarrow a$, where $\lambda_{j, F}(a,c)$ denotes the Friedrichs eigenvalues corresponding to $\tau|_{(a,c)}$, see \cite{BEWZ}, \cite[Thm.~10.8.2]{Ze05}. If we can show that $\lambda_j(u_a(\lambda,x), v_a(\lambda,c)) \to \lambda_{j, F}(a,c)$ as $x \downarrow a$, then this would imply that the sum of $N$ terms will converge to $0$, while the error term can be bounded by  $1/\lambda_{N+1, F}(a,c)$ as $x \downarrow a$, which can be made arbitrary small by choosing $N$ large enough, implying that \eqref{EqInt1} holds. From \eqref{TraceDiri} it would then also follow that $\int_x^c u_a(\lambda,t) v_a(\lambda,t) r(t)dx = \zeta(1; (x,c)) + O(\zeta(2;(x,c))$.

\textbf{Second  part}: It remains to show that $\lambda_j(u_a(\lambda,x), v_a(\lambda,c)) \to \lambda_{j, F}(a,c)$ as $x \downarrow a$. Let us make the following observation: Note that trivially, $\lambda = \lambda_1(L^2_a, u_a(\lambda,c))$, where $\lambda_j(L^2_a, u_a(\lambda,c))$ denotes the $j^{th}$ eigenvalue corresponding to $\tau|_{(a,c)}$ with $u_a(\lambda,x)|_{x = c}$-boundary condition at $x = c$, and ``$L^2$-boundary condition" at $x = a$. This is immediate as $u_a(\lambda, \, \cdot \,)$ is an eigenfunction which is nonzero on $(a,c]$ (as $v_a(\lambda, x)$ does not vanish there). 

Now denote by $\psi_{j,x}$ the $j^{th}$ eigenfunction corresponding to $\lambda_j(u_a(\lambda,x), v_a(\lambda,c))$ (recall $v_a(\lambda, c) = 0$). As $u_a(\lambda,x)$ has no zeros on $(a,c]$, it follows that $\lambda_j(u_a(\lambda,x), v_a(\lambda,c)) > \lambda$ for all $j \in \bbN$. 

Next, continue $\psi_{j,c}(t)$ to $(a,x]$ such that $\tau \psi_{j,c} = \lambda_j(u_a(\lambda,x), v_a(\lambda,c))\psi_{j,c}$ remains valid. Recall that $\psi_{j,c}(t)$ and $u_a(\lambda, t)$ satisfy, by assumption, the \emph{same} boundary condition  at $t = x$, however $\lambda_j(u_a(\lambda,x), v_a(\lambda,c)) > \lambda$. As $u_a(\lambda, t)$ is principal at $t = a$, it follows that $\psi_{j,c}(t)$ must have a (largest) zero $x^* \in (a,x)$ and consequently $\lambda_j(u_a(\lambda,x), v_a(\lambda,c))$ is equal to the $j^{th}$ Dirichlet eigenvalue $\lambda_{D,j}(x^*, c)$. Hence, as $x \downarrow a$, we obtain that $\lambda_j(u_a(\lambda,x), v_a(\lambda,c)) = \lambda_{D,j}(x^*, c) \to \lambda_{F, j}(a,c)$, finishing the proof.  
\end{proof}
Let us also remark that, by using similar techniques, an analog of \eqref{EqInt1} can be shown with the roles of the principal and nonprincipal solutions interchanged (though we will not show it here). 
\begin{lemma}\label{LemmaInt2}
    Assume that the self-adjoint realizations of $\tau|_{(a,c)}$ have a purely discrete spectrum. Let $u_a(\lambda,x)$ ($v_a(\lambda,x)$) be a principal (nonprincipal) solution at $x = a$ with $\lambda \in \mathbb R$, without any additional normalization assumption. Then
    \begin{align}\label{EqInt2}
        \lim_{x \downarrow a} \frac{v_a(\lambda,x)}{u_a(\lambda,x)} \int_a^x u_a^2(\lambda,t) r(t) dt = 0.
    \end{align}
\end{lemma}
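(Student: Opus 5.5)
The plan is to mirror the proof of Lemma \ref{LemmaInt1}, with the roles of $u_a$ and $v_a$ exchanged and the interval $(x,c)$ replaced by $(a,x)$. First normalize $W(v_a(\lambda,\cdot),u_a(\lambda,\cdot))\equiv 1$; the quantity in \eqref{EqInt2} is invariant under rescaling $u_a$ and $v_a$ separately. Also shrink $c$ so that $u_a(\lambda,\cdot)$ and $v_a(\lambda,\cdot)$ have no zeros on $(a,c]$. For $x\in(a,c)$, consider $\tau|_{(a,x)}$ with the principal ("$L^2$", i.e.\ Friedrichs) condition at $a$ and the $v_a(\lambda,t)|_{t=x}$-boundary condition at $x$. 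Its Green's function has diagonal $u_a(\lambda,t)v_a(\lambda,t)$. The analogue of \eqref{TraceCD}, via Mercer's theorem, is then
\begin{align*}
\int_a^x u_a(\lambda,t)v_a(\lambda,t)r(t)\,dt=\sum_{j\ge1}\frac{1}{\mu_j(x)-\lambda},\qquad \mu_j(x):=\lambda_j(L^2_a,v_a(\lambda,x)).
\end{align*}
Replacing the condition at $x$ by the Dirichlet one, the solution vanishing at $x$ is $v_a(\lambda,t)-\frac{v_a(\lambda,x)}{u_a(\lambda,x)}u_a(\lambda,t)$. This gives the analogue of \eqref{TraceDiri}:
\begin{align*}
\int_a^x u_a(\lambda,t)\Big[v_a(\lambda,t)-\frac{v_a(\lambda,x)}{u_a(\lambda,x)}u_a(\lambda,t)\Big]r(t)\,dt=\sum_{j\ge1}\frac{1}{\lambda_{j,F}(a,x)-\lambda}.
\end{align*}
Subtracting the two identities expresses $\frac{v_a(\lambda,x)}{u_a(\lambda,x)}\int_a^x u_a^2(\lambda,t)r(t)\,dt$ as a difference of two series.

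Second, I will use interlacing, since the two problems differ only by the boundary condition at $x$ (a rank-one resolvent perturbation, \cite[Cor.~II.2.1]{GK69}). Splitting off the first $N$ terms as in \eqref{leibniz} gives a finite sum plus an error bounded by $1/(\min\{\mu_{N+1}(x),\lambda_{N+1,F}(a,x)\}-\lambda)$. As $x\downarrow a$, all Friedrichs eigenvalues $\lambda_{j,F}(a,x)$ diverge to $+\infty$: the spectrum of $\tau|_{(a,x)}$ escapes to infinity by the purely discrete spectrum assumption and domain monotonicity (\cite{Kong96}, \cite[Ch.~10.8]{Ze05}). Interlacing gives $\mu_{j}(x)\ge\lambda_{j-1,F}(a,x)$, so $\mu_j(x)\to\infty$ for every $j\ge2$. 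Hence every term with $j\ge2$, and the tail, tend to $0$.

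The main obstacle is the single remaining eigenvalue $\mu_1(x)$, for which interlacing gives no lower bound. I must show $\mu_1(x)\to\infty$, or at least that $1/(\mu_1(x)-\lambda)\to0$. I would argue as in the second part of the proof of Lemma \ref{LemmaInt1}, in three steps.
\begin{itemize}
\item First, $\mu_1(x)\neq\lambda$, since $u_a$ and $v_a$ are linearly independent.
\item Second, $\mu_1(x)<\lambda$ is impossible. The eigenfunction $\psi$ would be principal at $a$, hence comparable to $u_a$ near $a$. By Sturm comparison with the zero-free $u_a$ and $v_a$ on $(a,x]$, the Wronskian $W(\psi,v_a)$ would then be monotone on $(a,x]$. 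It would vanish at $x$ because of the shared boundary condition, and tend to a nonzero limit at $a$, which gives a sign contradiction.
\item Third, for $\mu_1(x)>\lambda$, continue $\psi$ to the right of $x$ and compare with $v_a$, which satisfies the same condition at $x$ but has the smaller spectral parameter. Then $\psi$ must vanish at some $x^*$ between $x$ and a point $x'(x)$, where $x'(x)\downarrow a$ as $x\downarrow a$. This yields $\mu_1(x)=\lambda_{1,F}(a,x^*)\ge\lambda_{1,F}(a,x'(x))\to\infty$.
\end{itemize}
Quantifying $x'(x)$ is the delicate point. If the comparison only gives $x^*<c$, I would instead try to show directly that the ratio of the $\mu_1$-term to the left-hand side vanishes, using the decay of $u_a/v_a$.
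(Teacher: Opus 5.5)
\textbf{There is a genuine gap, and it sits exactly at the step you flag as delicate.} That step is not a technicality: it is the whole content of the lemma.

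With the interlacing $\lambda<\mu_1(x)<\lambda_{1,F}(a,x)<\mu_2(x)<\lambda_{2,F}(a,x)<\cdots$, every bracket $\frac{1}{\mu_j(x)-\lambda}-\frac{1}{\lambda_{j,F}(a,x)-\lambda}$ is positive. So the left-hand side of \eqref{EqInt2} dominates the $j=1$ bracket. Since $\lambda_{1,F}(a,x)\to\infty$, the statement \eqref{EqInt2} is \emph{equivalent} to $\mu_1(x)\to\infty$.

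Your third bullet does not prove this. In the proof of Lemma \ref{LemmaInt1}, the eigenfunction is continued \emph{toward} $a$. There, principality of $u_a(\lambda,\cdot)$ forces a zero $x^*\in(a,x)$, which tends to $a$ for free. In your mirrored situation you continue $\psi=u_a(\mu_1(x),\cdot)$ to the \emph{right} of $x$. Comparison with $v_a(\lambda,\cdot)$ (same boundary condition at $x$, smaller parameter) only gives $W(v_a,\psi)<0$ to the right of $x$, i.e.\ $\psi/v_a$ is decreasing there. Since $v_a$ has no zeros near $a$, nothing forces $\psi$ to vanish at all, let alone at some $x^*\le x'(x)\downarrow a$; nonprincipality of $v_a$ carries no such information. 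Your fallback is circular, because the left-hand side is exactly what has to be shown to vanish. The paper itself only says the lemma follows by ``similar techniques''; your attempt shows that the literal mirror of Lemma \ref{LemmaInt1} does not close.

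\textbf{How to close it.} The missing quantitative input is \eqref{EqInt1} itself, and with it the spectral detour becomes unnecessary.

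Normalize $W(v_a,u_a)=1$ with $u_a,v_a>0$ on $(a,c]$. Put $h=u_a/v_a$, so that $h'=1/(pv_a^2)>0$ and $h(a^+)=0$, and set $N(t)=\int_t^c v_a^2 r\,ds$. Write $u_a^2r=-h^2N'$ and integrate by parts on $(t,x)$. Let $t\downarrow a$; the boundary term $h^2N=h\cdot hN$ tends to $0$ by \eqref{EqInt1}. This gives
\[
\int_a^x u_a^2 r\,dt\le 2\int_a^x h'(t)\,h(t)N(t)\,dt\le 2h(x)\sup_{a<t<x}h(t)N(t).
\]
Dividing by $h(x)$ yields $\frac{v_a(x)}{u_a(x)}\int_a^x u_a^2r\,dt\le 2\sup_{a<t<x}h(t)N(t)\to0$.

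If you prefer to keep your route, the substitution $f=v_ag$ turns $\mu_1(x)-\lambda$ into the bottom of a weighted Neumann problem. A Muckenhoupt--Hardy inequality then gives $\mu_1(x)-\lambda\ge\big(4\sup_{a<y<x}h(y)\int_y^x v_a^2r\,dt\big)^{-1}$, which tends to $\infty$, again by \eqref{EqInt1}.

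\textbf{Two smaller points.}
\begin{enumerate}
\item Unless the resolvents of $\tau|_{(a,c)}$ are trace class, both of your Mercer identities on $(a,x)$ read $\infty=\infty$. They must be replaced by the trace of the rank-one resolvent difference, whose kernel is $\frac{v_a(x)}{u_a(x)}u_a(s)u_a(t)$, combined with interlacing.
\item In your second bullet, $u_a(\mu,\cdot)$ is in general not comparable to $u_a(\lambda,\cdot)$ near $a$ for $\mu\ne\lambda$ (cf.\ Theorem \ref{TheoremUV}). The conclusion $\mu_1(x)>\lambda$ still holds: for $\mu<\lambda$ one finds $v_a/\psi$ increasing on $(a,x)$, so $v_a\le C\psi$ near $a$. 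This contradicts $\int_a dt/(p\psi^2)=\infty$.
\end{enumerate}
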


The conclusions of Lemmas \ref{LemmaInt1}, \ref{LemmaInt2} do not hold if the spectrum has a continuous part. Consider the example of $\tau = -\frac{d^2}{dx^2}$, $x \in (0, \infty)$ and choose $v_{\infty}(\lambda, x) = \frac{e^{\sqrt{-\lambda}x}}{2 \sqrt{-\lambda}}$, $u_{\infty}(\lambda, x) = e^{-\sqrt{-\lambda}x}$ (we have switched to the right endpoint, but the same logic applies). Then one can compute
\begin{align}
     \lim_{x\to \infty} \frac{u_{\infty}(\lambda, x)}{v_{\infty}(\lambda, x)} \int_0^x v_{\infty}^2(\lambda, t)dt =\lim_{x\to \infty} \frac{v_{\infty}(\lambda, x)}{u_{\infty}(\lambda, x)} \int_{x}^\infty u_{\infty}^2(\lambda, t)dt= \frac{1}{4(-\lambda)} = \frac{1}{4 \dist(\lambda, \sigma_{ess}(\tau))}
\end{align}
($\sigma_{ess}(\tau)$ is the essential spectrum of any self-adjoint realization of $\tau$). This raises the following problem: 

\begin{problem}\label{Prob:Ess}
What else can be said about the limits in \eqref{EqInt1}, \eqref{EqInt2} under the assumption that the essential spectrum is present, $\lambda < \sigma_{ess}(\tau|_{(a,c)})$, and $W(v_a(\lambda, \, \cdot \,), u_a(\lambda, \, \cdot \,)) = 1$ (or $-1$ if we consider the right endpoint). Do the limits exist and, if so, are they always equal to $\frac{1}{4 \dist(\lambda, \sigma_{ess}(\tau|_{(a,c)}))}$?
\end{problem}

%%%%%%%%%%%%%%%%%%%%%%%%%%%%%%%%%%%%%

\noindent 
{\bf Acknowledgments.}
M.P. was supported by the starting grant from the Ragnar S\"oderbergs Foundation. J.S. was supported in part by an AMS--Simons Travel Grant.

\end{document}